\newtheorem{thm}{Theorem}[section]
\newtheorem{prop}[thm]{Proposition}
\newtheorem{lem}[thm]{Lemma}
\newtheorem{quest}{Question}
\newtheorem{cor}[thm]{Corollary}
\newtheorem{rem}[thm]{Remark}
\newtheorem{obs}[thm]{Observation}
\newtheorem{conj}{Conjecture}
\newtheorem{thmintro}{Result}
\newtheorem{case}{Case}
\newcommand{\covectors}{\ensuremath{\mathcal{L}}}
\def\B{\,\square \,}
\begin{document}
\thanksmarkseries{arabic}
\title{Corners and simpliciality in oriented matroids and  partial cubes}
\author{Kolja Knauer\thanks{Laboratoire d'Informatique et Systh\`emes, Aix-Marseille Universit\'e, Marseille, France \& Departament de Matem\`atiques i Inform\`atica,
Universitat de Barcelona (UB), Barcelona, Spain,  \texttt{kolja.knauer@lis-lab.fr}} \and Tilen Marc\thanks{Faculty of Mathematics and Physics, Ljubljana, Slovenia, Institute of Mathematics, Physics and Mechanics, Ljubljana, Slovenia \& XLAB d.o.o., Ljubljana, Slovenia, \texttt{tilen.marc@fmf.uni-lj.si}}}
%
%
%


\maketitle

\begin{abstract} 
Building on a recent characterization of tope graphs of Complexes of Oriented Matroids (COMs), we tackle and generalize several classical problems in Oriented Matroids (OMs), Lopsided Sets (aka ample set systems), and partial cubes via Metric Graph Theory. 

Our first main result is that every element of an OM from a class introduced by Mandel is incident to a simplicial tope, i.e, such OMs contain no mutation-free elements. This allows us to refute a conjecture of Mandel from 1983, that would have implied the famous Las Vergnas' simplex conjecture. Further, we show that the mutation graph of uniform OMs of order at most 9 are connected, thus confirming a stronger conjecture of Cordovil-Las Vergnas in this setting.

The second main contribution is the introduction of corners of COMs as a natural generalization of corners in Lopsided Sets. Generalizing results of Bandelt and Chepoi, Tracy Hall, and Chalopin et al. we prove that realizable COMs, rank 2 COMs, as well as hypercellular graphs admit corner peelings. 
Using this, we confirm Las Vergnas' conjecture for antipodal partial cubes (a class much lager than OMs) of small rank or isometric dimension.

%
\end{abstract}

\section{Introduction}\label{sec:intro}
 

The \emph{hypercube} $Q_n$ of dimension $n$ is the graph whose vertex set is $\{+, -\}^n$ where two vertices are adjacent if they differ in exactly one coordinate.
A graph $G$ is called a \emph{partial cube} if $G$ is an isometric subgraph of a hypercube $Q_n$, i.e., $d_G(u,v)=d_{Q_n}(u,v)$ for all $u,v\in G$. In this case, the minimum such $n$ is the \emph{isometric dimension} of $G$.

An Oriented Matroid (OM) is a mathematical structure that abstracts the properties of hyperplane arrangements, directed graphs, vector arrangements and others. It has been studied extensively in combinatorial, as well as in topological and algebraic settings. For graph theorists it might be attractive to consider the \emph{tope graph} of an OM. It can easily be shown that the tope graph is a partial cube and determines a simple OM uniquely up to isomorphism~\cite{Bjo-90}, hence  no information is lost in this approach.

In this paper based on recent graph theoretical characterization~\cite{Kna-17}, we analyse OMs and related structures through their tope graphs. In particular, in the first part of the paper this view leads us to new results concerning Las Vergnas simplex conjecture and mutations. While the approach to this section is graph theoretical, we try to present it in a way that a reader more familiar with the standard approach to OMs can understand it. In the second part of the paper we explore how these problems and ideas can be extended beyond OMs into two directions. First, we consider Complexes of Oriented Matroids (COMs), which are a recent generalization of OMs, who share the property that their tope graph is a partial cube and determines a simple COM uniquely up to isomorphism~\cite{Ban-18}. Second, we consider \emph{antipodal} partial cubes, i.e., those isometric subgraphs $G$ of $Q_n$ such that for every $v\in G$ also its \emph{antipode} $-v$, where all signs are reversed, is a vertex of $G$. Note that if $G$ is isometrically embedded in $Q_n$ with minimal $n$, the embedding is unique up to an isomorphism ~\cite[Chapter 5]{Ovc-11}, so an antipode is well defined even if the embedding is not given. Antipodal partial cubes are the tope graphs of so-called \emph{acycloids}~\cite{Fuk-93}. Both of these generalizations are natural since $G$ is the tope graph of an OM if and only it is antipodal and the tope graph of a COM~\cite{Kna-17}.

So, what is a tope graph? One of the standard ways to define an OM, a COM (or an acycloid) is as a pair $\mathcal{M}=(E,\mathcal{L})$ of a finite \emph{ground set} $E$ and a set of \emph{covectors} $\mathcal{L} \subset \{+,-,0\}^E$ with certain properties, see Section \ref{sec:prel}. One obtains the \emph{tope graph}, by considering the subgraph induced in the hypercube by the \emph{topes}, i.e., covectors without $0$-entries. Considering an OM as its tope graph, we will denote it just as $G$. All OM notions have an equivalent description in terms of the tope graph, e.g., the order of $E$ is the isometric dimension of $G$. The \emph{rank} of an OM $\mathcal{M}=(E,\mathcal{L})$, classically defined as the length of the maximal chains in $\mathcal{L}$ with respect to the product order induced by $0<+, 0<-$, can be read	 of its tope graph $G$ as well, see Section~\ref{sec:prel}.

If $G$ is an OM, then a vertex $v\in G$ is called  \emph{simplicial} if $\deg(v)=r(G)$.  Simplicial vertices correspond to simplicial topes and it is a well known fact that the degree of each vertex (tope) must be at least $r(G)$. The well-known simplex conjecture of Las Vergnas~\cite{lv-80} in terms of tope graphs of OMs reads as follows:

\begin{conj}[Las Vergnas]\label{conj:lasvergnas}
 Every OM has a simplicial tope.
\end{conj}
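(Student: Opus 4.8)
I would attempt an induction on the number $n$ of elements, using that OMs — equivalently, by Theorem~\ref{thm:char}(ii), antipodal partial cubes whose antipodal subgraphs are gated — are closed under contraction. If $G=Q_r$ every vertex is simplicial, so assume $G$ is a minimal counterexample: $\deg_G(v)>r(G)$ for every vertex $v$, while every OM on fewer elements has a simplicial vertex. Fix an element $f$ and set $H=\pi_f(G)$, an OM on $n-1$ elements, hence with a simplicial vertex $v$, i.e.\ $\deg_H(v)=r(H)$. If $f$ were a coloop then $G$ would be a prism over $H$ along $E_f$, and $v$ would lift to a vertex of $G$ of degree $r(H)+1=r(G)$, contradicting minimality; so we may assume $r(H)=r(G)$ for every choice of $f$, i.e.\ $G$ is ``connected''. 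The obstruction is then already visible: the fibre $\pi_f^{-1}(v)$ is a single vertex or an edge of $E_f$, and contracting $E_f$ can identify two edges of $G$ sharing an endpoint (those spanning a square with an edge of $E_f$), so a preimage of $v$ in $G$ may have degree exceeding $r(H)+1$ and need not be simplicial.

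To make progress in the connected case I would bring in the antipodal structure. A proper antipodal subgraph of $G$ corresponds to a nonzero covector and hence lies in some halfspace $E_f^{+}$; it is convex, antipodal, is not crossed by $E_f$, and so has strictly smaller isometric dimension, and by transitivity of convexity and gatedness together with Theorem~\ref{thm:char}(ii) it is itself an OM on fewer elements. By minimality it therefore contains a simplicial vertex, so every proper face of $G$ carries one. The plan is then to combine these boundary witnesses into a simplicial vertex of $G$: pick a vertex $v$ of minimum degree, choose a proper face $F$ through $v$ on which $v$ is ``as simplicial as possible'', and straighten the remaining non-simplicial corners of $v$ by a local, mutation-like move — ideally without creating new corners.

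I expect this last gluing step to be the genuine obstacle, and it is exactly where Conjecture~\ref{conj:lasvergnas} in fact remains open: no such local move is known in an arbitrary OM, and carrying it out seems to demand a combinatorial surrogate for the convexity and genericity arguments available only in the realizable world — where a simplicial tope is literally a simplicial region of a central hyperplane arrangement and, after intersecting with a sphere, its existence is Shannon's theorem on simplicial cells of arrangements, for which no oriented-matroid analogue is known. A realistic fallback, and the route the present paper pursues, is therefore to establish the conjecture — or the stronger statement that every element is incident to a simplicial tope — only in restricted regimes (bounded rank, bounded order, realizable OMs, or Mandel's class), where the needed genericity input is available.
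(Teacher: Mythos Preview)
The statement you were asked to prove is Conjecture~\ref{conj:lasvergnas}, and the paper does \emph{not} prove it: Las Vergnas' simplex conjecture is explicitly left open (see the Conclusions section). So there is no ``paper's own proof'' to compare against. Your proposal is accurate in its self-assessment: the inductive scheme you sketch stalls exactly where you say it does --- lifting a simplicial vertex of a contraction $\pi_f(G)$ back to $G$ fails in general because contraction can merge edges at the preimage, and the ``gluing/straightening'' step you describe is not known to exist for arbitrary OMs.

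Your fallback paragraph matches the paper's actual contributions: rather than attacking the conjecture globally, the paper (i) proves that Mandel OMs are $\Theta$-Las Vergnas (Theorem~\ref{thm:simplical_on_theta}), which is strictly stronger than Las Vergnas on that class and incidentally refutes Mandel's Conjecture~\ref{conj:mandel}; (ii) extends the rank~$3$ case to all antipodal partial cubes (Proposition~\ref{prop:strongLVrnk3}); and (iii) verifies the conjecture computationally for antipodal partial cubes of isometric dimension at most~$7$. None of these uses your induction-on-elements scheme; the Mandel argument instead exploits Euclideaness of an extension in general position and the acyclic orientation of the cocircuit graph, which is precisely the ``combinatorial surrogate for convexity and genericity'' you allude to --- available only under the Mandel hypothesis, not in general.
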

The conjecture is motivated by the fact that it holds for all OMs that are \emph{realizable} (by hyperplanes in Euclidean space)~\cite{Sha-79}.
The largest class known to satisfy Conjecture~\ref{conj:lasvergnas} was found in~\cite[Theorem 7]{Man-82}. We call that class \emph{Mandel} here and consider it in depth in Section \ref{sec:mandel}. Realizable OMs and OMs of rank at most $3$ are \emph{Euclidean} and the latter are Mandel, but the class is larger.
Indeed, Mandel~\cite[Conjecture 8]{Man-82} even conjectured the following as a ``wishful thinking statement'', since by the above it would imply the conjecture of Las Vergnas. 

\begin{conj}[Mandel]\label{conj:mandel}
 Every OM is Mandel\footnote{
Note that it was us and not Mandel who baptized this class of OMs, we merely 
want to combine eponymous homage and correct attribution.}.
\end{conj}

Let us now consider some strengthenings of Las Vergnas' conjecture. Recall that $G$ is a partial cube, and hence its edges are partitioned into so-called $\Theta$-classes corresponding to $E$, i.e., the coordinates of the hypercube $G$ is embedded in. We say that such $G$ is \emph{$\Theta$-Las Vergnas}, if every $\Theta$-class of $G$ contains an edge incident to a simplicial vertex. In the language of OMs this means that $G$ has no mutation-free elements. It is known that rank~$3$ OMs are $\Theta$-Las Vergnas~\cite{Lev-26}. 
This brings us to the first main result of this paper. In Theorem~\ref{thm:simplical_on_theta} we extend the class of $\Theta$-Las Vergnas OMs significantly, by showing that Mandel OMs are $\Theta$-Las Vergnas.
On the other hand, OMs that are not $\Theta$-Las Vergnas have been discovered~\cite{Ric-93,Bok-01,Tra-04} . See Figure~\ref{fig:tracyhall} for an illustration of one of them.
Together with Theorem~\ref{thm:simplical_on_theta} this disproves Mandel's conjecture (Corollary~\ref{cor:Mandelwaswrong}).

\begin{thmintro}\label{thm:intro_theta_lasvergnas}
Mandel OMs are $\Theta$-Las Vergnas, hence Mandel's Conjecture fails.
\end{thmintro}

\begin{figure}[ht]
\centering
\includegraphics[width=.6\textwidth]{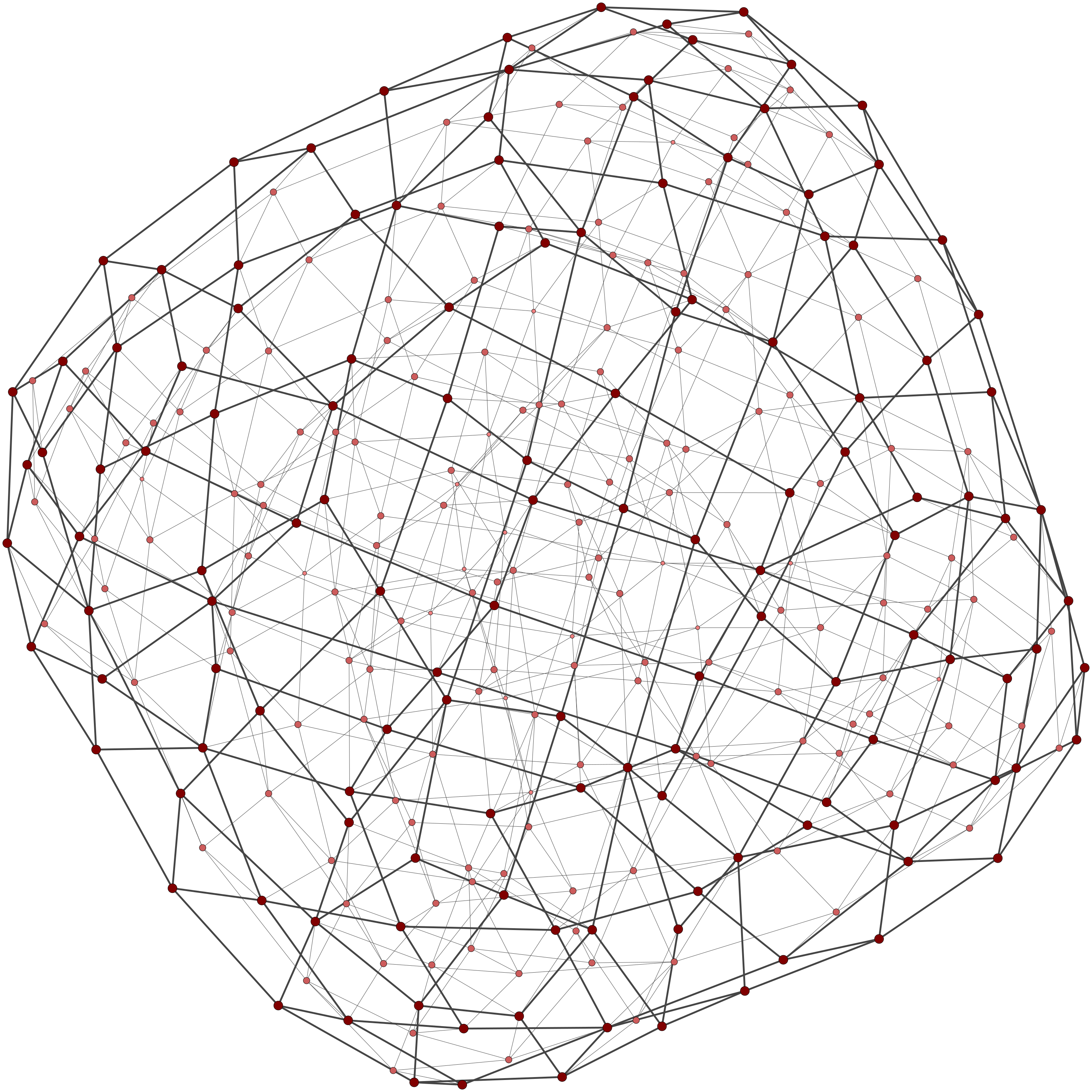}
\caption{A halfspace $E_e^+$ of a non-Mandel OM $G$, where $E_e$ is the $\Theta$-class witnessing that $G$ is not $\Theta$-Las Vergnas. The bold subgraph is an OM of rank~3 on the vertices incident with $E_e$ in $G$.}
\label{fig:tracyhall}
\end{figure}

The next is a strengthening of Las Vergnas' conjecture for uniform Oriented Matroids (UOMs), see Section \ref{sec:prel} for a definition. 
A well-known operation that one can apply to a simplicial vertex of an UOM is a \emph{mutation} - transforming one UOM to another one, see Section~\ref{sec:mut}.
One can consider the \emph{mutation graph}  $\mathcal{G}^{n,r}$, whose vertices are (reorientation classes of) UOMs of rank~$r$ and of order $n$. Two reorientation classes are connected if and only if there exists a mutation between them.
The graph $\mathcal{G}^{n,r}$ is motivated by the topological representation of OMs. In particular, by Ringel's Homotopy Theorem~\cite{Rin-56,Rin-57} it follows that $\mathcal{G}^{n,3}$ is connected. Moreover, the subgraph of $\mathcal{G}^{n,r}$ induced by the realizable UOMs is connected by~\cite{Rou-88}.
Las Vergnas' conjecture would imply that $\mathcal{G}^{n,r}$ has minimum degree at least $1$ (where loops can occur). A much stronger affirmation appears in~\cite{Rou-88}:

\begin{conj}[Cordovil-Las Vergnas]\label{conj:cordovil}
For all $r,n$ the graph $\mathcal{G}^{n,r}$ is connected.
\end{conj}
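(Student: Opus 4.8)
The full conjecture seems beyond the reach of current techniques, so my plan is to establish it for bounded ground set size, which is exactly what is needed to \emph{confirm} it (here, for $n\le 9$): for each pair $(n,r)$ the assertion ``$\mathcal{G}^{n,r}$ is connected'' becomes a finite computation, and one pushes this computation as far as feasibility allows. The first step is to generate, up to reorientation, all vertices of $\mathcal{G}^{n,r}$ -- that is, all UOMs of isometric dimension $n$ and rank $r$. By Theorem~\ref{thm:char}(v) this can be phrased purely graph-theoretically: a UOM is an antipodal partial cube all of whose \emph{proper} antipodal subgraphs are hypercubes. The efficient route is incremental generation: since UOMs and antipodal partial cubes are closed under contraction, every UOM of dimension $n$ arises from one of dimension $n-1$ by a single-element extension (the inverse of contracting a $\Theta$-class, i.e., a one-element deletion), so one enumerates all admissible extensions and performs isomorph rejection using a canonical form for tope graphs -- legitimate because isomorphisms of simple COMs correspond to isomorphisms of their tope graphs and the minimum-dimension hypercube embedding is unique up to $\mathrm{Aut}(Q_n)$. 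This is the standard orderly / canonical-construction-path paradigm.

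The second step is to build the edge set of $\mathcal{G}^{n,r}$ on this finite vertex list. By Lemma~\ref{lem:simplicialvertex} each simplicial vertex $v$ of a rank-$r$ UOM lies in a unique convex copy of $Q^-_r$ (a $Q_r$ minus a vertex); a mutation fills in the missing vertex of that $Q^-_r$, deletes $v$, and does the same antipodally at $-v$, producing another UOM of the same rank and dimension, the move being its own inverse. So for each generated UOM I would locate all convex $Q^-_r$'s, perform each mutation, identify the resulting reorientation class in the list, and add the corresponding edge; then, for each $r$, test connectivity of $\mathcal{G}^{n,r}$ by breadth-first search or union--find over components. The obstacle at this level is purely one of scale: the number of reorientation classes of UOMs grows doubly exponentially in $n$, so even with aggressive isomorph rejection the enumeration is only feasible for small $n$, and getting past $n=9$ would need markedly better pruning.

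For the conjecture in full generality, the genuine difficulty is that a mutation is a very local move whereas connectivity is a global property: one would want an analogue of Ringel's Homotopy Theorem~\cite{Rin-56,Rin-57} (which settles rank $3$) or of the Roudneff--Sturmfels argument~\cite{Rou-88} (which settles the realizable stratum) that works uniformly across all ranks, and no such argument is currently available. The most natural line of attack, induction on $n$ via deletion/contraction -- lifting a connecting sequence of mutations in $\mathcal{G}^{n-1,r}$ to one in $\mathcal{G}^{n,r}$ -- runs into the obstruction that a mutation of a one-element deletion need not lift to a single mutation upstairs, so I expect this to be the main hurdle to any unconditional proof.
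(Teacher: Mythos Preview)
Your framing is right: this is an open conjecture, and the paper likewise does not prove it in general but confirms it computationally for $n\le 9$. So the comparison is between two computational strategies.

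The paper's route differs from yours in one structural respect. Rather than building $\mathcal{G}^{n,r}$ directly on reorientation classes, the paper first proves (Proposition~\ref{prop:mut}) that $\mathcal{G}^{n,r}$ is connected if and only if the coarser graph $\underline{\mathcal{G}}^{n,r}$ on \emph{isomorphism} classes is connected. The nontrivial direction uses that the realizable locus in $\mathcal{G}^{n,r}$ is connected~\cite{Rou-88} and that realizability is invariant under both permutation and reorientation, so a mutation path in $\underline{\mathcal{G}}^{n,r}$ to a realizable class lifts. This buys a substantial reduction: $\underline{\mathcal{G}}^{n,r}$ has far fewer vertices (Table~\ref{tab:iso}), and the paper can draw on the existing database of isomorphism classes rather than regenerating them. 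Mutations are then located on tope graphs via vertex degrees, isomorphism classes identified with Bliss, and connectivity checked. The paper also invokes OM duality (connectivity of $\mathcal{G}^{n,r}$ is equivalent to that of $\mathcal{G}^{n,n-r}$) to halve the parameter space.

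Your proposal has a small tangle worth flagging: you say you will generate vertices of $\mathcal{G}^{n,r}$ ``up to reorientation'' but then perform ``isomorph rejection using a canonical form for tope graphs''. A canonical form for the abstract tope graph quotients by the full $\mathrm{Aut}(Q_n)$ (sign flips \emph{and} coordinate permutations), so what you would actually be enumerating is isomorphism classes, i.e., vertices of $\underline{\mathcal{G}}^{n,r}$, not of $\mathcal{G}^{n,r}$. Connectivity of the former does not tautologically give connectivity of the latter; Proposition~\ref{prop:mut} is exactly the missing bridge. If instead you really mean to enumerate reorientation classes, you need a canonical form relative to $\mathbb{Z}_2^n$ only, and the vertex count grows by a factor of up to $n!$, making $n=9$ considerably harder than the paper's route.
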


In Section~\ref{sec:mut} we introduce two variations of $\mathcal{G}^{n,r}$ that seem natural in the graph theoretic language of OMs, denoted $\overline{\mathcal{G}}^{n,r}$ and $\underline{\mathcal{G}}^{n,r}$. In the graph $\underline{\mathcal{G}}^{n,r}$ the vertices are graph isomorphism classes, instead of the reorientation classes, while in $\overline{\mathcal{G}}^{n,r}$ the vertices are all the OMs of rank $r$ and order $n$ (over the same ground set). Indeed, it is easy to see that the connectivity of $\overline{\mathcal{G}}^{n,r}$ implies the connectivity of $\mathcal{G}^{n,r}$ and the connectivity of $\mathcal{G}^{n,r}$ implies that $\underline{\mathcal{G}}^{n,r}$ is  connected. (Observation~\ref{obs:connectivities}).  

Our results with respect to Conjecture~\ref{conj:cordovil} include that $\overline{\mathcal{G}}^{n,3}$ is connected, which is a consequence of Ringel's Homotopy Theorem~\cite{Rin-56,Rin-57} and strengthens the fact that $\mathcal{G}^{n,3}$ is connected (Proposition~\ref{prop:pseudoline}). Moreover, we show that connectivity of $\underline{\mathcal{G}}^{n,r}$ implies connectivity of $\mathcal{G}^{n,r}$ (Proposition~\ref{prop:mut}). Since $\underline{\mathcal{G}}^{n,r}$ is much smaller that $\mathcal{G}^{n,r}$, the fact that Conjecture~\ref{conj:cordovil} is closed under duality, see~\cite[Exercise 7.9]{bjvestwhzi-93}, allows us to verify Conjecture~\ref{conj:cordovil} for all $n\leq 9$, computationally. 
See Table~\ref{tab:iso} for orders of the graphs $\underline{\mathcal{G}}^{n,r}$ and Figure~\ref{fig:mutations} for a depiction of $\underline{\mathcal{G}}^{8,4}$.

\begin{thmintro}
The Cordovil-Las Vergnas Conjecture holds for UOMs of order at most $9$.
\end{thmintro}

In the last part of the paper (Sections \ref{sec:sec5corners}, \ref{sec:antipodal}) we explore how the notion of simpliciality generalizes to classes of partial cubes containing OMs.
First we consider Complexes of Oriented Matroids (COMs)~\cite{Ban-18}. This class can be seen as a common generalization of OMs as well as,e.g.,~distributive lattices, antimatroids, median graphs, CAT(0) cube complexes, linear extensions of posets, pseudoline arrangements, hyperplane arrangements, Pasch graphs, and Lopsided Sets (LOPs). COMs have been acknowledged already several times in their short existence, see~\cite{margolis2015cell,itskov2018hyperplane,Bau-16,alex2020oriented,DKR21,padrol2021sweeps,R20,Hoch-19}. As their name suggest, COMs may be seen as complexes whose cells are OMs. In terms of the tope graph $G$ of a COM its OM cells can be identified as its so-called antipodal subgraphs. If a COM is a LOP, then these cells are just subgraphs isomorphic to hypercubes, see Section~\ref{sec:prel} for the details.

 We generalize the notion of simpliciality from OMs to COMs as follows: a vertex $v\in G$ is \emph{simplicial} if it is contained in a unique maximal antipodal subgraph $A\subseteq G$ and $\deg(v)=r(A)$. In LOPs simplicial vertices are usually called \emph{corners} and have important implications for results in computational learning theory, see~\cite{Cha-18}.
The notion of corners in COMs allows us to prove the existence of simplicial vertices in certain COMs.
 In Section~\ref{sec:corners} we show that  realizable COMs (Proposition~\ref{prop:realizable}), COMs of rank~$2$ (Theorem~\ref{thm:peeling}), and hypercellular graphs (Theorem~\ref{thm:hypercellular}) admit \emph{corner peelings}, which is a way of deconstructing a COM in a well-behaved way. This result generalizes the corresponding results on realizable LOPs~\cite{Tra-04}, LOPs of rank~$2$~\cite{Cha-18}, and bipartite cellular graphs~\cite{Ban-96}. Furthermore, together with  the examples from~\cite{Ric-93,Bok-01,Tra-04} this yields locally realizable COMs that are not realizable and refutes a conjecture of~\cite[Conjecture 2]{Ban-18} (Remark~\ref{rem:notrealizable}). 

Lastly, we consider the question of generalizing  Conjecture~\ref{conj:lasvergnas} to antipodal partial cubes. First we consider the case where the rank is at most $3$. In this case the distinction between general antipodal partial cubes and OMs is easy, since OMs of rank at most $3$ are exactly the planar antipodal partial cubes~\cite{Fuk-93}. Based on this, for rank~$3$ OMs Conjecture~\ref{conj:lasvergnas} can be deduced from Euler's Formula and the fact that partial cubes are triangle-free. We extend this result by showing that all (hence also the non-planar) antipodal partial cubes of rank at most~3 have simplicial vertices in Section~\ref{sec:antipodal}.

Conjecture~\ref{conj:lasvergnas} has been verified for UOMs of rank~$4$ up to 12 elements~\cite{Bok-01}. In Section~\ref{sec:antipodal} we prove that even general antipodal partial cubes of isometric dimension up to $7$ have simplicial vertices. 
It could be that every antipodal partial cube of rank $r$ has minimum degree at most $r$.

\begin{thmintro}
Realizable COMs, COMs of rank~$2$ and hypercellular graphs have corner peelings, thus in particular simplicial vertices. The minimum degree of antipodal partial cubes of rank~$3$ or of isometric dimension at most $7$ is at most their rank, i.e., they satisfy the Las-Vergnas property. 
\end{thmintro}

\section{Preliminaries}\label{sec:prel}

Based on a recent result from ~\cite{Kna-17}, we start by presenting OMs, Affine Oriented Matroids (AOMs) and Uniform Oriented Matroids (UOMs)  solely as graphs, in particular as a subfamily of partial cubes. A reader familiar with these concepts can avoid this view and read the paper nevertheless. 
We begin with necessary standard definitions from Metric Graph Theory:

As stated above, a \emph{partial cube} is an isometric subgraph of a hypercube, where the minimal dimension it embeds into is called its \emph{isometric dimension}. The notion is well defined since an embedding of a partial cube is unique up to automorphisms of the hypercube~\cite[Chapter 5]{Ovc-11}.  As mentioned above, we call a partial cube $G$ of isometric dimension $n$ \emph{antipodal} if when embedded in $Q_n$ for every vertex $v$ of $G$ also its \emph{antipode} $-v$, i.e., the vertex with all coordinates flipped, is in $G$. 
A subgraph $H$ of $G$ is \emph{convex}, if all the shortest paths in $G$ connecting two vertices of $H$ are also in $H$. A convex subgraph $H$ of a partial cubes $G$ is a partial cube as well, since every convex (or isometric) subgraph of an isometric subgraph of a hypercube is isometric in the hypercube as well. In fact the embedding of $H$ is inherited from $G$. We say a subgraph $H$ of $G$ is \emph{antipodal}, if it is a convex subgraph of $G$ and an antipodal partial cube on its own (where antipodes in $H$ are considered with respect to its embedding in a minimal hypercube). Finally, we call a subgraph $H$ of a graph $G$ \emph{gated} if for every vertex $x$ of $G$ there exists a vertex $v_x$ of $H$ such that for every $u\in H$ there is a shortest path in $G$ from $x$ to $u$ passing through $v_x$. Gated subgraphs are convex \cite[Section 12.1]{ham-11}. 
As mentioned earlier, embedding a partial cube $G$ in $Q_n$ yields a partition of the edges of $G$ into so-called \emph{$\Theta$-classes} corresponding to the dimensions of $Q_n$. Each $\Theta$-class consists of the edges corresponding to a flip of a fixed coordinate. We denote by $E_f$ a $\Theta$-class of $G$ where $f$ corresponds to a coordinate in $\{1, \ldots, n\}$. 
We write $E_f^+$ for the vertices having the coordinate $f$ equal to $+$ and analogously define $E_f^-$. The sets $E_f^+$ and $E_f^-$ are called \emph{halfspaces} of $G$ and induce graphs that are partial cubes as well since they are already isometrically embedded in a hypercube. 
If $\{E_{f_1}^{s_1}, E_{f_2}^{s_2}, \ldots E_{f_k}^{s_k}\}$ are halfspaces in a partial cube $G$, for $f_i$ coordinate and $s_i \in \{+,-\}$, $1\leq i \leq k$, the partial cube induced by the intersection $\cap_{i=1}^k E_{f_i}^{s_i}$ is sometimes called a \emph{restriction} of $G$. Note that in the standard language of oriented matroids, $\Theta$-classes correspond to \emph{elements} of a simple OM, hence also isometric dimension is sometimes just called  the number of elements. 

Starting from a graph theoretical perspective we state a characterization -- that serves as a definition for now -- of (simple)  OMs, AOMs, UOMs, LOPs, and COMs.

\begin{thm}[\cite{CKP22,daS-95,Kna-17}]\label{thm:char}
 There is a one to one correspondence between the classes of (simple):
 \begin{itemize}
  \item[(i)]   OMs and antipodal partial cubes whose antipodal subgraphs are gated~\cite{Kna-17,daS-95},
  \item[(ii)]  AOMs and halfspaces of antipodal partial cubes whose antipodal subgraphs are gated~\cite{Kna-17},
  \item[(iii)] UOMs and antipodal partial cubes whose proper antipodal subgraphs are  hypercubes~\cite{Kna-17,CKP22}.
\item[(iv)]  LOPs and partial cubes whose antipodal subgraphs are  hypercubes~\cite{Law-83},
 hypercubes~\cite{Kna-17,CKP22}.

\item[(v)]  COMs and partial cubes whose antipodal subgraphs are gated~\cite{Kna-17},
 
 \end{itemize}
\end{thm}

While the proof of the above theorem is rather complicated, the correspondence with the standard definition of the OMs, AOMs, UOMs through covectors $\mathcal{L} \subset \{+,-,0\}$ (see below) is simple. One direction we already described above, namely given $\mathcal{L}$ the vertices of the tope graph are the elements without $0$-entries (topes) inducing a subgraph in $Q_{E}$. Conversely, one can get the covectors from the tope graph by associating to its antipodal subgraphs sign-vectors, that encode their relative position to the halfspaces. This is explained in more detail below.

An operation that is well known in the study of partial cubes is a (partial cube) \emph{contraction} $\pi_f$ that for
a coordinate $f$ contracts all the edges in a $\Theta$-class $E_f$. The family of partial cubes is closed under the operation of contraction as well as are the families of OMs, AOMs (in their graph representation) and the class of antipodal partial cubes, see~\cite{Kna-17}.
The partial cube contraction operation corresponds to \emph{deletion} in covector systems. 

The \emph{rank} $r(G)$ of a partial cube $G$ is the largest $r$ such that $G$ can be transformed to $Q_r$ by a sequence of partial cube contractions. The definition of rank in oriented matroid theory is equivalent, see~\cite{daS-95,CKP22}. Furthermore, notice that viewing the vertices of $G\subseteq Q_n$ as a set $\mathcal{S}$ of subsets of $\{1,\ldots, n\}$, $r(G)$ coincides with the VC-dimension of $\mathcal{S}$, see~\cite{VC-15}. The latter correspondence has led to some recent interest in partial cubes of bounded rank, see~\cite{CKP20}. See Figure~\ref{fig:desargues} for an antipodal partial cube of rank~$3$.

\begin{figure}[ht]
\centering
\includegraphics[width=.6\textwidth]{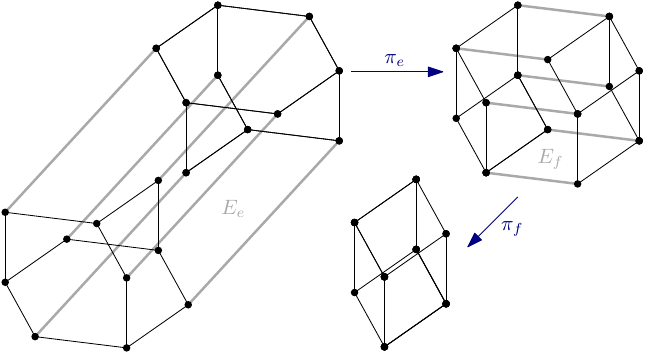}
\caption{Contracting an antipodal partial cube to $Q_3$.}
\label{fig:desargues}
\end{figure}

The inverse of a contraction is an {expansion}. In fact one can look at expansions in partial cubes the following way: if $H$ is a contraction of $G$, i.e.~$H=\pi_e(G)$, then one can consider in $H$ sets $H_1=\pi_e(E_e^+)$ and $H_2=\pi_e(E_e^-)$. Notice that they completely determine the expansion, since $G$ can be seen as a graph on the disjoint union of $H_1$ and $H_2$ where edges between them correspond to $H_1\cap H_2$. By \emph{expansion} of $H$ we refer to the subgraphs $H_1, H_2$ and sometimes to $G$. 
 If $G$ and $H$ are OMs, then we say that $H_1,H_2$ is an \emph{OM-expansion}, also called \emph{single-element extension} in OM theory (we define OM-expansion in terms of covectors as well, see below).
More generally, if $H$ is an antipodal partial cube, then an expansion $H_1, H_2$ of $H$ is \emph{antipodal} if the expanded graph $G$ is again antipodal. It is well-known, see e.g~\cite{Kna-17,Pol-19}, that $G$ is antipodal if and only if $H_1=-H_2$. Clearly, a OM-expansion is antipodal. An OM-expansion $H_1, H_2$ is \emph{in general position} if each maximal proper antipodal subgraph is either  completely in $H_1$ or completely in $H_2$ but not in $H_1\cap H_2$. We reestablish the definition of the general position in terms of covectors below.
Expansions in general position are central to the notion of Mandel OMs as well as to the definition of corners in COMs, i.e., in both Section~\ref{sec:mandel} and Section~\ref{sec:corners}. It is well-known, that every OM admits an expansion and indeed even an expansion in general position, see~\cite[Lemma 1.7]{San-02} or~\cite[Proposition 7.2.2]{bjvestwhzi-93}.

We introduced COMs and their subclasses as graphs. Nevertheless, for certain results it is convenient to define covectors and cocircuits of COMs which are standard ways to introduce OMs. 
 The covectors are represented as a subset $\covectors\subseteq \{+,-,0\}^n$ and have to satisfy certain axioms in order to encode a COM, OM, AOM, LOP or UOM. If $X \in \covectors$ and $e \in [n]$ is a coordinate of $X$, we shall write $X_e \in \{+, -, 0\}$ for the value of $X$ in coordinate $e$.
When considering tope graphs, one restricts usually to simple systems. Here, a system of sign-vectors $\covectors$ is \emph{simple} if it has no ``redundant'' elements, i.e., for each $e \in [n]$,  $\{X_e: X\in \covectors\}=\{+, -,0 \}$ and for each pair $e\neq f$ in $[n]$,  there exist $X,Y \in \covectors$ with $\{X_eX_f,Y_eY_f\}=\{+, -\}$. We will assume simplicity without explicit mention.  By the graph-theoretical representation of COMs given in~\cite{Kna-17}, the covectors correspond to the antipodal subgraphs of a COM $G$. Indeed, in a partial cube every convex subgraph is an intersection of halfspaces, see e.g.~\cite{Alb-16}, and one can assign to any convex subgraph $H$ a unique sign-vector $X(H)\in\{0,+,-
\}^n$ by setting for any coordinate $e\in\{1,\ldots, n\}$: $$X(H)_e=\begin{cases}
                                   + & \text{if }X\subseteq E_e^+,\\ 
                                   - & \text{if }X\subseteq E_e^-,\\
                                   0 & \text{otherwise.}\\
                                  \end{cases}$$

This correspondence yields a dictionary in which important concepts on both sides, graphs and sign-vectors translate to each other. We have spoken about isometric dimension and rank. Another instance of this is that if the $v\in G$ is a vertex of an antipodal partial cube, then for its antipode $u$ we have $X(u)=-X(v)$. Thus, we will often denote the antipodes of a set of vertices $H$ just by $-H$. Another noteworthy instance is the relation of gates and composition. The \emph{composition} of two sign vectors $X,Y\in\{0,+,-
\}^n$ is defined as the sign-vector obtained by setting for any coordinate $e\in\{1,\ldots, n\}$: $$(X\circ Y)_e=\begin{cases}                                   X_e & \text{if }X_e\neq 0,\\ 
Y_e & \text{otherwise.}\\                                  \end{cases}$$

\begin{prop}{\cite{Kna-17}}\label{prop:gates}                                 If $H$ is a gated subgraph of a partial cube $G$, then for a vertex $v$ with gate $u$ in $H$, we have $X(u)=X(H)\circ X(v)$.
\end{prop}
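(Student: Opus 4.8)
The plan is to verify the claimed equality of sign-vectors coordinate by coordinate. For a coordinate $e$ with $X(H)_e\neq 0$ there is nothing to do: by definition of $X(H)$ the whole of $H$ lies in the halfspace $E_e^{X(H)_e}$, and since $u\in H$ this forces $X(u)_e=X(H)_e=(X(H)\circ X(v))_e$. So the only content is to show that whenever $X(H)_e=0$ — that is, the $\Theta$-class $E_e$ has vertices of $H$ on both of its sides — the gate $u$ lies on the same side of $E_e$ as $v$, i.e.\ $X(u)_e=X(v)_e=(X(H)\circ X(v))_e$.

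To handle these coordinates I would first translate the gate property into the language of sign-vectors via the isometric embedding $G\subseteq Q_n$. For vertices $a,b$ of $G$ write $S(a,b)=\{e\in[n]:X(a)_e\neq X(b)_e\}$ for the set of coordinates in which $a$ and $b$ differ; since $G$ embeds isometrically into $Q_n$ and vertices have no $0$-entries, $d_G(a,b)=|S(a,b)|$, and a coordinate-wise check gives $S(v,w)=S(v,u)\,\triangle\,S(u,w)$ for any three vertices, where $\triangle$ denotes symmetric difference. The defining property of the gate $u$ of $v$ in $H$ is that $d_G(v,w)=d_G(v,u)+d_G(u,w)$ for every $w\in H$; rewritten, this says $|S(v,u)|+|S(u,w)|=|S(v,u)\,\triangle\,S(u,w)|$, which (using $|A\triangle B|=|A|+|B|-2|A\cap B|$) holds precisely when $S(v,u)\cap S(u,w)=\emptyset$. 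Hence, for every $w\in H$, the coordinates in which $v$ and $u$ differ are disjoint from those in which $u$ and $w$ differ.

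Now fix a coordinate $e$ with $X(H)_e=0$. By the definition of $X(H)$ there is a vertex $w\in H$ on the same side of $E_e$ as $v$, i.e.\ $X(w)_e=X(v)_e$. Suppose for contradiction that $X(u)_e\neq X(v)_e$. Then $e\in S(v,u)$, and from $X(u)_e\neq X(v)_e=X(w)_e$ we also get $e\in S(u,w)$, so $e\in S(v,u)\cap S(u,w)$, contradicting the disjointness established above. Therefore $X(u)_e=X(v)_e$, and together with the first paragraph this yields $X(u)=X(H)\circ X(v)$.

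There is no genuine obstacle here; the argument is a short application of the isometric embedding. The one point to get right is the equivalence between additivity of distances through the gate and the disjointness $S(v,u)\cap S(u,w)=\emptyset$, which rests on $d_G(a,b)=|S(a,b)|$ and hence on $G$ being a partial cube. A reader preferring a path-based formulation may instead concatenate a shortest path from $v$ to $u$ with one from $u$ to $w$ — this is a shortest $v$–$w$ path by the gate equation — and observe that it would traverse an edge of $E_e$, although $v$ and $w$ lie on the same side of $E_e$, contradicting that a shortest path in a partial cube crosses each $\Theta$-class at most once.
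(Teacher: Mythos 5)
Your proof is correct. The paper itself gives no proof of this proposition — it is quoted from~\cite{Kna-17} — so there is nothing in the text to compare against; your argument (reduce to the coordinates $e$ with $X(H)_e=0$, translate the gate condition $d_G(v,w)=d_G(v,u)+d_G(u,w)$ into the disjointness $S(v,u)\cap S(u,w)=\emptyset$ via $d_G(a,b)=|S(a,b)|$, and derive a contradiction from a witness $w\in H$ on $v$'s side of $E_e$) is the standard one and is complete. The only implicit ingredients, that $X(H)_e=0$ guarantees a vertex of $H$ on each side of $E_e$ and that the entries of tope sign-vectors are $\pm$ so that $S(v,w)=S(v,u)\,\triangle\,S(u,w)$, are both immediate from the definitions in Section~2.
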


A set of covectors forms a COM if it satisfies the following two axioms:

\begin{itemize}
\item[{ (FS)}] $X\circ -Y \in  \covectors$  for all $X,Y \in  \covectors$.
\item[{ (SE)}]  for each pair $X,Y\in\covectors$ and for each $e\in [n]$ such that $X_eY_e=-1$,  there exists $Z \in  \covectors$ such that
$Z_e=0$  and  $Z_f=(X\circ Y)_f$  for all $f\in [n]$ with $X_fY_f\ne -1$.
\end{itemize}

It is easy to see, check e.g.~\cite{Ban-18}, that a set of covectors is an OM if and only if it is a COM satisfying additionally:

\begin{itemize}
\item[{ (Z)}] $\mathbf{0} \in  \covectors$.
\end{itemize}

For similar axiomatizations for LOPs, AOMs, and UOMs we refer to~\cite{Ban-18,Bau-16}. We now reestablish some definitions stated in terms of graphs in terms of covectors. What was defined as a partial cube contraction on the tope graph of an OM, corresponds to so called \emph{deletion} in OMs. In particular, if $\covectors$ is a set of covectors of an OM and $e\in[n]$ is an element of it, then $\covectors' = \covectors \backslash e = \{X \backslash e \mid X \in \covectors \}$ is a set of covectors of a smaller OM, that is said to the obtain by the deletion of $e$. On the contrary, of $\covectors$ is said to be a \emph{single-element extension} of $\covectors'$ (recall that in the graph setting it is named an expansion). The extension is said to be in \emph{general position} if the union of images of $\{X \in \covectors \mid X_e=+\}$ and $\{X \in \covectors \mid X_e=-\}$ under the deletion covers all the covectors of $\covectors'$. In fact, as we will see in Section \ref{sec:mandel}, the image of $\{X \in \covectors \mid X_e=+\}$ under deletion is maximal with the property that it induces a COM in $\covectors'$. We will use this to define a corner of an OM. For $G_1,G_2$ being the images of $\{X \in \covectors \mid X_e=+\}$ and $\{X \in \covectors \mid X_e=-\}$ under deletion, we will sometimes also say that the expansion is along $G_1$ and $G_2$.

We will content ourselves with the definition of OMs and COMs with covectors and for other classes use Theorem~\ref{thm:char} as a definition. In order to formulate (SE) in terms of antipodal subgraphs, we give some more terminology about how subgraphs and $\Theta$-classes can relate.
We will say that a $\Theta$-class $E_f$ \emph{crosses} a subgraph $H$ of $G$ if at least one of the edges in $H$ is in $E_f$. Moreover, $E_f$ \emph{separates} subgraphs $H,H'$ if $H\subseteq E_f^+$ and $H'\subseteq E_f^-$ or the other way around. We collect the coordinates separating $H$ and $H'$ in the set $S(H,H')$ --  usually called the \emph{separator}. Let us now state the axiom of \emph{strong elimination for graphs}:

\begin{itemize}
 \item[(SE)] Let $X,Y$ be antipodal subgraphs of $G$ and $E_e$ a $\Theta$-class such that $X\subseteq E_e^+$ and $Y\subseteq E_e^-$, i.e., $e\in S(X,Y)$. There is an antipodal subgraph $Z$ of $G$ that is crossed by $E_e$ and for all $f\neq e$ we have:
 
 \vspace{-.3cm}
 
 \begin{itemize}
  \item if $X, Y\subseteq E_f^{+}$ then $Z\subseteq E_f^{+}$, and if $X, Y\subseteq E_f^{-}$ then $Z\subseteq E_f^{-}$
  \item if $E_f$ crosses one of $X, Y$ and the other is a subset of $E_f^{+}$ then $Z\subseteq E_f^{+}$, and if $E_f$ crosses one of  $X, Y$ the other is a subset of $ E_f^{-}$ then $Z\subseteq E_f^{-}$
  \item if $E_f$ crosses both $X$ and $Y$, then it crosses $Z$.
 \end{itemize}
\end{itemize}

In some parts of the paper we will abuse a bit the distinction between covectors and antipodal subgraphs in the way that we have suggested in the above definitions.

\begin{figure}[ht]
\centering
\includegraphics[width=\textwidth]{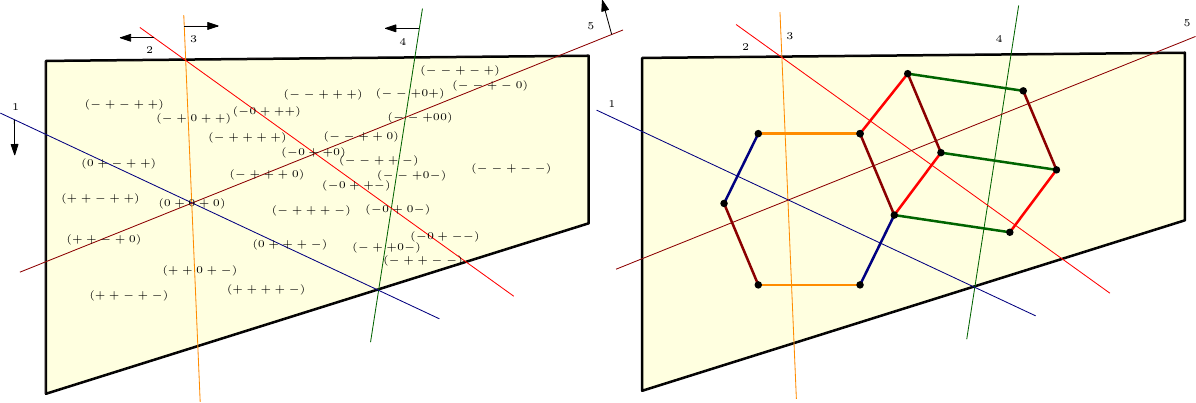}
\caption{A realizable COM and its tope graph. Its bounded faces, edges and vertices are the antipodal subgraphs.}
\label{fig:exampletopegraph}
\end{figure}
 
Some COMs, OMs, AOMs, and LOPs can be particularly nicely represented by a geometric construction. Let $\{H_1, \ldots, H_n\}$ be a set of hyperplanes in an Euclidean space $\mathbb{R}^d$ and $C$ a full-dimensional open convex set in $\mathbb{R}^d$. The hyperplanes cut $C$ in $d$-dimensional chambers and to every point in $C$ one can associate a vector in $\{+,-,0\}^n$ that denotes its relative position to the hyperplanes (if they are given with positive and negative side). These are the covectors. One can form a graph $G$ whose vertices are the chambers where two chambers are adjacent if and only if they are separated by exactly one hyperplane. Such graphs are always COMs. If  $C= \mathbb{R}^d$ then the graph is an AOM, and if moreover all $\{H_1, \ldots, H_n\}$ cross the origin point, then the graph is an OM. If $\{H_1, \ldots, H_n\}$ are the coordinate hyperplanes of $\mathbb{R}^d$ and $C$ is arbitrary, then the graph is a LOP. If a COM, OM, AOM, or LOP $G$ is isomorphic to a graph obtained in this way, we call it \emph{realizable}. See Figure~\ref{fig:exampletopegraph} for a realizable COM and its tope graph. Realizable COMs embody many nice classes such as linear extension graphs of posets, see~\cite{Ban-18} and are equivalent to special convex neural codes, namely so-called stable hyperplane codes, see~\cite{itskov2018hyperplane,alex2020oriented}.

If $G$ is an OM, then the set of antipodal subgraphs is usually ordered by reverse inclusion to yield the \emph{big face lattice} $\mathcal{F}(G)$ whose minimum is $G$ itself.
This ordering in terms of covectors can be equivalently defined by setting $X\leq Y$ for covectors $X,Y$, as the product ordering of $\{+,-,0\}^n$ relative to the ordering $0 \leq +, 0 \leq -$.
If $G$ is of rank~$r$, then it follows from basic OM theory that the $\mathcal{F}(G)$ is atomistic and graded, where $r + 1$ is the length of any maximal chain. Hence the latter can also be used to define the rank of $G$. The atoms, i.e. the covectors corresponding to the antipodal subgraphs of rank~$r-1$, are called \emph{cocircuits} in the standard theory of OMs.

Another graph associated to an OM $G$ of rank~$r$ is its \emph{cocircuit graph} of $G$, i.e., the graph $G^*$ whose vertices are the antipodal subgraphs of $G$ of rank~$r-1$ (equivalently cocircuits) where two vertices are adjacent if their intersection in $G$ is an antipodal subgraph of rank~$r-2$ (i.e. two cocircuits $X,Y$ are adjacent if there exists a covector $Z$ with $X,Y \prec Z$). We denote the cocircuit graph of $G$ by $G^*$, since it generalizes planar duality in rank~$3$, see~\cite{Fuk-93}, however in higher rank~$(G^*)^*$ is not well-defined, because the cocircuit graph does not uniquely determine the tope graph, see~\cite{cordovil2000cocircuit}. There has been extensive research on cocircuit graphs~\cite{bab-01,omax,omcubic,mon-06}. However their characterization and recognition remains open. Cocircuit graphs play a crucial role for the notion of Euclideaness and Mandel in Section~\ref{sec:mandel}.

In a general COM $G$, the poset $\mathcal{F}(G)$ remains an upper semilattice, since antipodal subgraphs are closed under intersection, but there is no minimal element. There are different possible notions of cocircuits that allow to axiomatize COMs, see~\cite{Ban-18}. We consider cocircuits in the setting of \emph{pure} COMs $G$, i.e., all maximal antipodal subgraphs of $G$ are of the same rank and $G^*$ is connected. If $G$ is a non-antipodal pure COM, then its cocircuits are just the maximal antipodal subgraphs. We will introduce the cocircuit graph of pure COMs in Section~\ref{sec:corners}, where it will serve for proving the existence of corners in COMs of rank~$2$.

\section{Mutation graphs of uniform oriented matroids}\label{sec:mut}

Let us define mutations of UOMs through their tope graph. If $v$ is a simplicial vertex in a UOM $G$ of rank~$r$, then $v$ is contained in a unique convex hypercube minus a vertex, let us denote it by $Q^-_{r}$. This is a well known assertion and it directly follows from Lemma~\ref{lem:simplicialvertex}. If one fills in the missing vertex of $Q^-_{r}$ and instead removes $v$ and does the same to the antipodes of the vertices in $G$, one obtains a new UOM $G'$ of rank~$r$. This operation is called a \emph{mutation}. Hence, a mutation is an operation that transforms a UOM into another UOM. A simple analysis shows that the operation is reversible, i.e.~the inverse operation is also a mutation, and that the rank of both UOMs is equal.

Thus, one can now consider a \emph{mutation graph} whose vertices are UOMs embedded into $Q_n$, for some $n\in \mathbb{N}$  of fixed rank~$r$ and edges are corresponding to mutations. In fact, one can consider three mutation graphs corresponding to the different notions of equivalence of OMs, that will now be introduced.

As stated above, the isometric embedding of a partial cube into a hypercube of minimum dimension is unique up to automorphisms of the hypercube, see e.g.~\cite[Chapter 5]{Ovc-11}. Indeed, partial cubes $G_1,G_2\subseteq Q_n$ are isomorphic as graphs if and only if there is $f\in\mathrm{Aut}(Q_n)$ such that $f(G_2)=G_1$. This leads to the fact that isomorphisms of simple OMs and their tope graphs correspond to each other. Since (unlabeled, non-embedded) isomorphic tope graphs are sometimes considered as equal, this allows to speak about isomorphism classes of OMs in a natural way. The situation is the same if one consider OMs in the standard definition since an isomorphism of an OM directly translates to an isomorphism of its tope graph, see e.g.~\cite{bjvestwhzi-93,Bjo-90}.

A more refined notion of isomorphisms for partial cubes are reorientations. For partial cubes $G_1,G_2\subseteq Q_n$ one says that $G_1$ is a \emph{reorientation} of $G_2$ if there is $f\in\mathbb{Z}_2^n\subseteq\mathrm{Aut}(Q_n)$ such that $f(G_2)=G_1$, i.e., $f$ only switches signs. This yields an equivalence relation whose classes are called \emph{reorientation classes}.

We give the three resulting graphs:
\begin{itemize}
\item $\overline{\mathcal{G}}^{n,r}$ is the graph whose vertices are UOMs of rank~$r$ and isometric dimension $n$, embedded into $Q_n$. Two graphs are connected if and only if there exists a mutation between them.
\item $\mathcal{G}^{n,r}$ is the graph whose vertices are reorientation classes of UOMs of rank~$r$ and isometric dimension $n$ embedded into $Q_n$. Two reorientation classes are connected if and only if there exists a mutation between them.
\item $\underline{\mathcal{G}}^{n,r}$ is the graph whose vertices are graph isomorphism classes of UOMs of rank~$r$ and isometric dimension $n$. Two classes are connected if and only if there exists a mutation between them.
\end{itemize}

The three different mutation graphs are related as follows:

\begin{obs}\label{obs:connectivities}
 Let $0\leq r\leq n$. We have $\overline{\mathcal{G}}^{n,r}$ connected $\implies \mathcal{G}^{n,r}$ connected $\implies \underline{\mathcal{G}}^{n,r}$ connected. 
\end{obs}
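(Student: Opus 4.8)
The plan is to establish the two implications by unwinding the definitions of the three mutation graphs in terms of the equivalence relations (reorientation and graph isomorphism) that refine them, using that mutations are compatible with these relations. The key observation is that there are natural surjective ``quotient'' maps on vertex sets: from UOMs embedded in $Q_n$ to reorientation classes, and from reorientation classes to graph isomorphism classes, since two embedded UOMs related by a sign switch are in particular isomorphic as graphs. So $\overline{\mathcal{G}}^{n,r}\to\mathcal{G}^{n,r}\to\underline{\mathcal{G}}^{n,r}$ are graph quotient maps in the sense that they are surjective on vertices and edges map to edges (or loops).

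First I would prove the first implication. Assume $\overline{\mathcal{G}}^{n,r}$ is connected. Take two reorientation classes $[M_1]$ and $[M_2]$, choose representatives $M_1, M_2$ embedded in $Q_n$, and pick a path $M_1 = N_0, N_1, \ldots, N_k = M_2$ in $\overline{\mathcal{G}}^{n,r}$. Each edge $N_iN_{i+1}$ is a mutation between embedded UOMs; a mutation between two embedded UOMs descends to a mutation (or an equality, giving a loop, which does not affect connectivity) between their reorientation classes, because the mutation operation is defined intrinsically on the covector system and hence commutes with reorientation. Therefore $[N_0], [N_1], \ldots, [N_k]$ is a walk in $\mathcal{G}^{n,r}$ from $[M_1]$ to $[M_2]$, so $\mathcal{G}^{n,r}$ is connected.

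The second implication is entirely analogous: a reorientation of a UOM is an isomorphism of the corresponding tope graph (indeed a hypercube automorphism restricting to an isomorphism), so the quotient map from reorientation classes to graph isomorphism classes is well defined and surjective, and a mutation between reorientation classes descends to a mutation between the corresponding graph isomorphism classes. Hence a path in $\mathcal{G}^{n,r}$ projects to a walk in $\underline{\mathcal{G}}^{n,r}$, and connectivity transfers.

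I do not expect a serious obstacle here; the only point requiring a little care is checking that the mutation operation genuinely respects both equivalence relations, i.e.\ that applying a reorientation (resp.\ a graph isomorphism) before or after a mutation yields the same result up to the relevant equivalence. This follows from the fact that a mutation is characterized by the local picture of a unique convex $Q^-_r$ (cf.\ Lemma~\ref{lem:simplicialvertex}), which is preserved by hypercube automorphisms, so reorienting or relabeling and then mutating agrees with mutating and then reorienting or relabeling. Once this compatibility is in hand, the two implications are immediate from the path-projection argument above.
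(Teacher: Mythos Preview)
Your argument is correct and is exactly the approach taken in the paper: both implications follow because the natural quotient maps $\overline{\mathcal{G}}^{n,r}\to\mathcal{G}^{n,r}\to\underline{\mathcal{G}}^{n,r}$ are weak graph homomorphisms (surjective on vertices, sending edges to edges or loops), so connectivity descends. The paper states this in two sentences without unpacking the compatibility of mutations with the equivalence relations, which you spell out but which is indeed routine.
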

\begin{proof}
 If $\overline{\mathcal{G}}^{n,r}$ is connected, then also $\mathcal{G}^{n,r}$ is, since there is a weak homomorphism from the first to the second, mapping an OM to its equivalence class. Similarly, if $\mathcal{G}^{n,r}$ is connected then also  $\underline{\mathcal{G}}^{n,r}$ is, since a reorientation can be seen as an isomorphism. 
\end{proof}

We start by analyzing the connectivity of mutation graphs for small rank. Since OMs of rank 1 or 2 are simply isomorphic to an edge or an even cycle, respectively, the first interesting case is when the rank of OMs is 3.
Our first result on the topic, together with Observation~\ref{obs:connectivities} settles the connectivity of all three graphs in rank $3$: 

\begin{prop}\label{prop:pseudoline}
For every $n$ the graph $\overline{\mathcal{G}}^{n,3}$ is connected.
\end{prop}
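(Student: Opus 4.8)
The plan is to upgrade the known connectivity of $\mathcal{G}^{n,3}$ (a consequence of Ringel's Homotopy Theorem) to the finer graph $\overline{\mathcal{G}}^{n,3}$, whose vertices are genuine rank-3 UOMs embedded in $Q_n$ rather than reorientation classes. By Observation~\ref{obs:connectivities} we already know $\mathcal{G}^{n,3}$ is connected, so it suffices to show two things: (a) within a single reorientation class, any two embedded UOMs are joined by a path of mutations in $\overline{\mathcal{G}}^{n,3}$; and (b) a mutation between two reorientation classes can be realized as a mutation between suitably chosen representatives, and moreover the choice of representative does not obstruct combining these steps into a global path. Together (a) and (b) show that the weak homomorphism $\overline{\mathcal{G}}^{n,3}\to\mathcal{G}^{n,3}$ (collapsing each reorientation class) lifts connectivity back upward.

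For step (a), fix a reorientation class and note that its members are exactly the images $\sigma(G)$ for $\sigma\in\mathbb{Z}_2^n\subseteq\mathrm{Aut}(Q_n)$ of one fixed representative $G$. A single-coordinate sign flip $\sigma_f$ applied to a UOM is again a UOM, and I claim a reorientation can be achieved by a sequence of mutations. Here the key observation is that mutations commute with reorientations in the sense that if $G'$ is obtained from $G$ by the mutation at a simplicial vertex $v$, then $\sigma(G')$ is obtained from $\sigma(G)$ by the mutation at $\sigma(v)$ (the local picture — a convex $Q^-_r$ with its antipodal copy — is preserved by any hypercube automorphism, using Lemma~\ref{lem:simplicialvertex}). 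Using Ringel's theorem applied to the reorientation class of $G$ itself, there is a closed mutation walk in $\mathcal{G}^{n,3}$ realizing any prescribed reorientation at the level of pseudoline arrangements; lifting this walk coordinate by coordinate to $\overline{\mathcal{G}}^{n,3}$ and tracking which embedded representative each mutation produces, one sees that every $\sigma(G)$ is reachable from $G$. (Concretely: the pseudoline arrangement is unchanged by a reorientation, only the $\pm$-labels of the two sides of each pseudoline change, and Ringel moves can be arranged to cycle through all such labelings.)

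For step (b), given a mutation in $\mathcal{G}^{n,3}$ between reorientation classes $[G_1]$ and $[G_2]$, by definition there are representatives $G_1'\in[G_1]$, $G_2'\in[G_2]$ with $G_2'$ obtained from $G_1'$ by a single mutation; this is already an edge of $\overline{\mathcal{G}}^{n,3}$. So a path $[G_1],[H_1],\dots,[H_k],[G_2]$ in $\mathcal{G}^{n,3}$ yields a sequence of edges in $\overline{\mathcal{G}}^{n,3}$ connecting \emph{some} representative of each class, but the representative at which we arrive in class $[H_i]$ need not match the one from which we must depart. This is exactly where step (a) closes the gap: within $[H_i]$ we insert a mutation path between the two needed embedded representatives. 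Concatenating, we obtain a mutation path in $\overline{\mathcal{G}}^{n,3}$ from $G_1'$ to $G_2'$, and finally one more application of step (a) inside $[G_1]$ and $[G_2]$ connects the arbitrary given embedded UOMs to $G_1'$ and $G_2'$.

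The main obstacle is step (a): proving that reorientations are mutation-realizable. It is intuitively clear from the wiring-diagram picture — sliding pseudolines past each other via Ringel moves and returning to the start permutes the region labels — but making this rigorous requires care that the closed walk in $\mathcal{G}^{n,3}$ lifts to a walk in $\overline{\mathcal{G}}^{n,3}$ producing the desired sign pattern, rather than merely returning to $[G]$. I would handle this by flipping one coordinate $f$ at a time: it suffices to exhibit, for each $f$, a mutation path from $G$ to $\sigma_f(G)$; then composing over a generating set of $\mathbb{Z}_2^n$ gives all reorientations. For a single coordinate, one uses that the pseudoline $\ell_f$ bounds regions on both sides, and a sequence of Ringel moves pushing $\ell_f$ around the sphere past all other pseudolines and back returns the arrangement to itself while swapping the two halfspaces $E_f^+$ and $E_f^-$ — which is precisely $\sigma_f$. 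Verifying that each such move is a legitimate mutation of the embedded UOM (not just of the unoriented arrangement) is the technical heart, and relies on Lemma~\ref{lem:simplicialvertex} identifying mutations with simplicial vertices and their local $Q^-_r$ neighborhoods.
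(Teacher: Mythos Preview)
Your approach is essentially the same as the paper's: reduce to showing that a single-element reorientation $\sigma_f$ can be achieved by mutations, and do this by ``pushing'' the pseudoline $\ell_f$. The paper makes this rigorous by invoking a stronger, \emph{labeled} form of Ringel's theorem (from~\cite{bjvestwhzi-93}): any two labeled wiring diagrams are connected by mutations of bounded cells. Given $\mathcal{M}$ and its reorientation $\mathcal{M}'$ at $e$, the paper constructs explicit wiring diagrams $\mathcal{A}$ (with $e$ the top wire on the left) and $\mathcal{A}'$ (with $e$ the bottom wire on the left) representing them, and then the labeled Ringel theorem immediately supplies the mutation path in $\overline{\mathcal{G}}^{n,3}$.

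Your sketch is correct in outline but the phrase ``pushing $\ell_f$ around the sphere past all other pseudolines and back'' is not quite the right picture (on $S^2$ each pseudoline is already a closed curve), and you acknowledge the rigor gap yourself. The cleaner fix is exactly what the paper does: work in the wiring-diagram model, where pushing $e$ from the top track to the bottom track is a well-defined sequence of bounded-cell triangle moves, and the labeled theorem guarantees such a sequence exists. With that substitution your steps (a) and (b) go through exactly as you describe.
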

\begin{proof}
 It is a well known fact, implied by the Topological Representation Theorem~\cite{fo-la-78},  that OMs of rank~3 can be represented as pseudo-circle arrangements on a $2$-dimensional sphere. Further, this arrangement can be assumed to be centrally symmetric and then by restricting the sphere along an equator that does not contain any intersection of the pseudo-circles, one obtains a wiring diagram, see Figure~\ref{fig:pseudoline}, i.e., a set of pseudo-lines traversing from left to right pairwise intersecting exactly once. Note that different choices of equator yield different wiring diagrams, but the all represent the OM in the same way, and only differ in what line is the topmost entering on the left. 
 Then, every pseudoline splits the plane into positive and negative half, and every point in the plane yields a covector, by associating to each pseudo-line a coordinate and putting a $+,-,0$ in it, depending on the point being on positive, negative side or on the pseudo-line. This, way for each covector $Y$ of the OM exactly one of $Y,-Y$ is represented. Maximal cells correspond to topes where two are adjacent if the cells intersect on a line-segment. The OM being uniform corresponds to arrangement being \emph{simple}, i.e., no three lines cross in a point.
 A \emph{mutation} of the wiring-diagram means pulling a bounding line of a simplicial cell over its opposite corner. A special case is if the simplicial cell is crossed by the bounding vertical lines, see Figure~\ref{fig:pseudoline}. This operation corresponds to what we described as mutation in tope graphs.

%

 We now use the proof of Ringel's Homotopy Theorem~\cite{Rin-56,Rin-57} as shown in~\cite[Section 6.4]{bjvestwhzi-93}. Ringel shows that any two labeled simple wiring diagrams can be transformed into another by performing mutations.
 
 Suppose now that we have two UOMs of rank three $\mathcal{M}$ and $\mathcal{M}'$ and want to transform one into the other by mutations. Both can be represented by wiring diagrams, as stated above. Then by Ringel's Theorem we can assume that $\mathcal{M}$ can be transformed into a reorientation of $\mathcal{M}'$. So to prove that $\overline{\mathcal{G}}^{n,3}$ is connected we only need to consider the case where $\mathcal{M}$ and $\mathcal{M}'$ differ in the reorientation of one element $e$. Represent $\mathcal{M}$ as a wiring-diagram $\mathcal{A}$, where $e$ is the top-element on the left and construct a wiring diagram $\mathcal{A}'$ as shown in Figure~\ref{fig:pseudoline}, with $e$ being the bottom element on the left. Note that $\mathcal{A}'$ represents $\mathcal{M}'$, and the mutations transforming $\mathcal{A}$ into $\mathcal{A}'$ by Ringel's Theorem push the line $e$ without changing its orientation. Thus, we have obtained $\mathcal{M}'$ from $\mathcal{M}$ by mutations.
\end{proof}


\begin{figure}[ht]
\centering
\includegraphics[width=\textwidth]{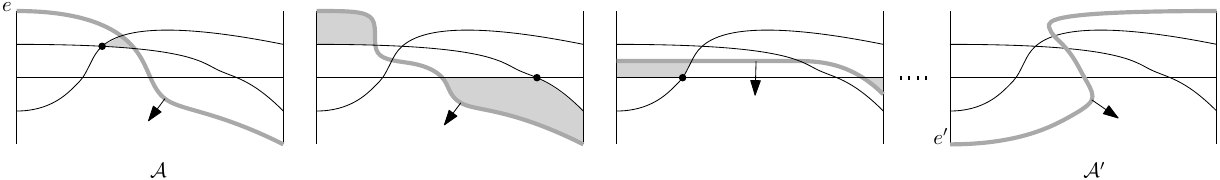}
\caption{How to construct the new pseudoline arrangement in the proof of Proposition~\ref{prop:pseudoline}. The arrow points to the positive side of $e$. The grey cell is the simplicial cell on which a mutation is applied.}
\label{fig:pseudoline}
\end{figure}

We show that two of the open questions about the connectivity of mutation graphs are equivalent.

\begin{table}[ht]
\center
\begin{tabular}{l||c|c|c|c|c|c|c|c|c}
$n\backslash r$  & 2 & 3 & 4 & 5 & 6 & 7 & 8 & 9 & 10  \\ \hline \hline
2 & 1 & 1 & 1 & 1 & 1 & 1 & 1 & 1 & 1 \\ \hline
3 &  & 1 & 1 & 1 & 4 & 11 & 135 & 482 & 312356 \\\hline
4 &  &  & 1 & 1 & 1 & 11 & 2628 & 9276595 & ? \\\hline
5 &  &  &  & 1 & 1 & 1 & 135 & 9276595 & ? \\\hline
6 &  &  &  &  & 1 & 1 & 1 & 4382 & ? \\\hline
7 &  &  &  &  &  & 1 & 1 & 1 & 312356 \\\hline
8 &  &  &  &  &  &  & 1 & 1 & 1 \\\hline
9 &  &  &  &  &  &  &  & 1 & 1 \\\hline
10 &  &  &  &  &  &  &  &  & 1 
\end{tabular}
\caption {Known orders of $\underline{\mathcal{G}}^{n,r}$, retrieved from \url{http://www.om.math.ethz.ch/}.}
\label{tab:iso}
\end{table} 

\begin{prop}\label{prop:mut}
If $\underline{\mathcal{G}}^{n,r}$ is connected, then $\mathcal{G}^{n,r}$ is connected.
\end{prop}
\begin{proof}

Notice that the property of being a realizable OM is independent of reorientation or permuting the elements. If $\underline{\mathcal{G}}^{n,r}$ is connected, then there exists a sequence of mutations from any $[A]\in \underline{\mathcal{G}}^{n,r}$ to a realizable class $[B]\in\underline{\mathcal{G}}^{n,r}$. This sequence can then be lifted to a sequence of mutations from any $A'\in \mathcal{G}^{n,r}$ to a realizable $B'\in\mathcal{G}^{n,r}$. This proves that there exists a path from every $A'\in \mathcal{G}^{n,r}$ to a reorientation classes of realizable OMs. Since by~\cite{Rou-88} the induced subgraph of all realizable classes  in $\mathcal{G}^{n,r}$ is connected, this proves that $\mathcal{G}^{n,r}$ is connected.
\end{proof}

Proposition~\ref{prop:mut} allows to approach Conjecture~\ref{conj:cordovil} for small values of $n$ and $r$ from the computational perspective, since it allows computations on the smaller graph $\underline{\mathcal{G}}^{n,r}$. To provide an idea of the computational weight of this task, Table~\ref{tab:iso} shows the known orders of such graphs $\underline{\mathcal{G}}^{n,r}$ for small $n$ and $r$. Moreover, Figure~\ref{fig:mutations} displays the graph $\underline{\mathcal{G}}^{8,4}$.

\begin{figure}[ht]
\center
\includegraphics[width=.6\textwidth]{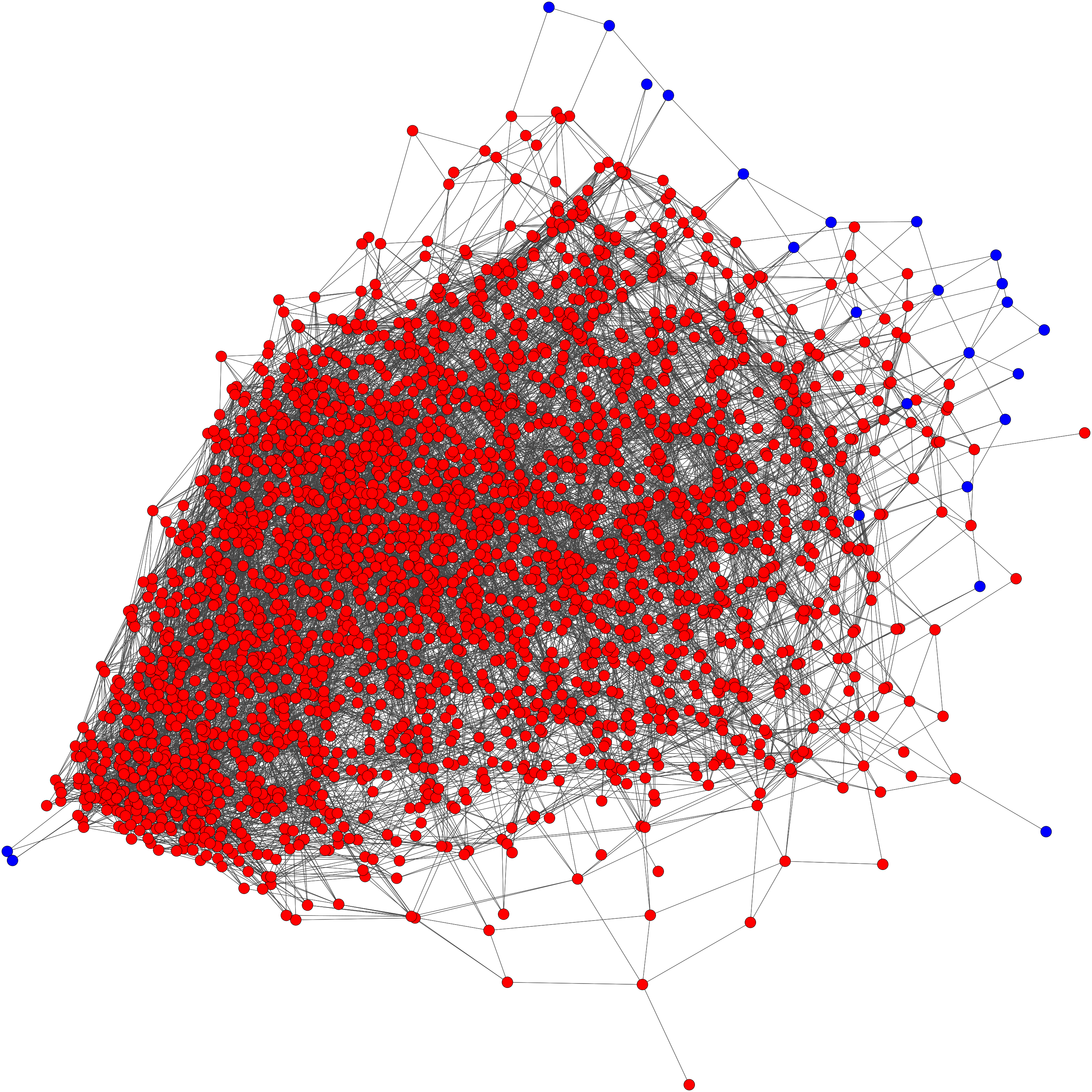}
\caption{The graph $\underline{\mathcal{G}}^{8,4}$ where red vertices are isomorphism classes of realizable UOMs and blue ones are non-realizable}
\label{fig:mutations}
\end{figure}

We verified computationally that for all the parameters from Table \ref{tab:iso} where the isomorphism classes of OMs are known, their mutation graph $\underline{\mathcal{G}}^{n,r}$ is connected. By Proposition \ref{prop:mut} also the corresponding $\mathcal{G}^{n,r}$ are connected. This was possible by considering UOMs as (tope)graphs in which finding possible mutations is easy, since only degrees of vertices need to be checked. We calculated the isomorphism class of the mutated graphs using the software Bliss \cite{junttila2007engineering} that is designed for calculations of isomorphisms of graphs. The computationally most demanding task was the graph $\underline{\mathcal{G}}^{9,4}$ where efficient graph representation was needed. 

\begin{cor}\label{cor:94}
The graph $\mathcal{G}^{n,r}$ is connected for $n\leq 9$.
\end{cor}

Testing graph isomorphism instead of OM-isomorphism was an essential ingredient in order to obtain Corollary~\ref{cor:94}. Checking connectivity of $\overline{\mathcal{G}}^{n,r}$ is far more demanding. We do not know anything about the connectivity of this graph beyond rank $3$, see Proposition~\ref{prop:pseudoline}. 

\section{Mandel's OMs and Euclideanness}\label{sec:mandel}

In this section we focus on Las Vergnas' and Mandel's Conjectures. The main result is that Mandel OMs are $\Theta$-Las Vergnas and therefore not all OMs are Mandel. In order to make the result accessible to readers familiar with the OM terminology as well as to readers more familiar with graphs we try to argue in the proof along both views. 

The concept of Euclideanness is based on the orientations of cocircuit graph introduced in Section~\ref{sec:prel}. Furthermore, the property of being Mandel relies on the definition of extensions in general position and Euclideanness.

Let $G$ be an OM of rank~$r$. Recall the definition of cocircuit graph $G^*$ from Section~\ref{sec:prel}.  Graph $G^*$ has cocircuits of $G$ for vertices, i.e. in graph theoretical language the antipodal subgraphs of $G$ of rank~$r-1$.  Two cocircuits $X,Y$ are adjacent if there exists a covector $Z$ with $X,Y \prec Z$ (hence $Z$ has rank $r-2$) in the big face lattice $\mathcal{F}(G)$, or equivalently if the intersection of the antipodal subgraphs corresponding to $X$ and $Y$ in $G$ is an antipodal subgraph of rank~$r-2$.

Consider now  a maximal subset $F$ of the $\Theta$-classes of $G$ such that there exist a covector $Z$ of rank $r-2$ with $Z_e=0$, for all $e\in F$. In other words, there exists an antipodal subgraph of rank $r-2$ crossed by the $\Theta$-classes in $F$. Let $X_1,\ldots,X_n$ be a maximal path in  $G^*$ such that $(X_i)_e=0$ for all $e\in F$, i.e. all the corresponding antipodal subgraph are crossed by the $\Theta$-classes in $F$. It follows from the topological representation of OMs~\cite{Man-82}, that $X_1,\ldots,X_n$ induce a cycle in $G^*$. Moreover, $X_{n/2+i}$ is the antipode of $X_{i}$ in $G$, for $1\leq i \leq n/2$. Furthermore, for each $\Theta$-class $E_f\notin F$ there are exactly two antipodal cocircuits $X_{i}$ and $X_{n/2+i}$ such that $(X_i)_f = (X_{n/2+i})_f=0$, meaning that $E_f$ crosses exactly two pairwise antipodal subgraphs corresponding to $X_{i}$ and $X_{n/2+i}$ on the cycle. The cocircuit graph $G^*$ is the (edge-disjoint) union of such cycles.

Let now $H$ be a halfspace  of $G$, i.e. $H$ is (the tope graph of) an AOM. The induced subsequence $X_k,\ldots,X_{\ell}$ of the above cycle is called a \emph{line} $L$ in $H$. This name comes from the fact that in the topological representation the sequence corresponds to a pseudo-line, see~\cite{Man-82}.
Let now $e$ be an element (or $E_e$ be a $\Theta$-class) of $H$. We say that $e$ (or $E_e$) \emph{crosses} a line $L$ of $H$, if there exists a unique $X_i$ on $L$ with $(X_i)_e = 0$, i.e. the corresponding antipodal subgraph  is crossed by $E_e$. Note that for a line $L$ of an AOM, if there exists $X_i$ with $(X_i)_e = 0$, then $X_i$ is either unique, or all cocircuits on $L$ are crossed by $E_e$. Equivalently, exactly one or all of the antipodal subgraphs on the line are crossed by $E_e$. This allows to define 
\emph{the orientation of line $L$ with respect to $e$ (or $E_e$)}: If $L$ is not crossed by $e$ (equivalently $E_e$) we leave its edges undirected. Otherwise, let $X_i$ be the element of $L$ that is crossed by it and assume that $X_j\subset E_e^-$ for $j<i$ and $X_j\subset E_e^+$ for $j>i$. We orient all edges of the form $X_j, X_{j+1}$ from $X_j$ to $X_{j+1}$. This is, the path $L$ is directed from $E_i^-$ to $E_i^+$.

The edges of the cocircuit graph $H^*$ of an AOM $H$ are partitioned into lines and for every element $e$ (or $\Theta$-class $E_e$) we obtain a partial orientation of $H^*$ by orienting every line with respect to $E_e$. Let us call this mixed graph the orientation of $H^*$ with respect to $E_e$.
Following Mandel~\cite[Theorem 6]{Man-82}, an AOM is \emph{Euclidean} if for every element $e$ (or $\Theta$-class $E_e$) the orientation of the cocircuit graph $H^*$ with respect to $e$ is \emph{strictly acyclic}, i.e., {any directed cycle (following undirected edges or directed edges in the respective orientation) consists of only undirected edges.} In other words, any cycle  that contains a directed edge contains one into each direction. 

Following Fukuda~\cite{Fuk-82}, an OM is called \emph{Euclidean} if all of its halfspaces are Euclidean AOMs. Since non-Euclidean AOMs exist, see~\cite{Fuk-82,Man-82}, also non-Euclidean OMs exist. However, there is a larger class of OMs that inherits useful properties of Euclidean AOMs and that was introduced by Mandel \cite{Man-82}.

We call an OM \emph{Mandel} if it has an expansion in general position such that $G_1$ (and implicitly $G_2$, since they are isomorphic with the antipodal map) are Euclidean AOMs. In terms of the covectors description, this can be stated as follows: if $\covectors$ is a set of covectors of an OM, and there exists $\covectors'$ a single-element extension in general position of it with respect to an element $e$, i.e.~$\covectors'\backslash e = \covectors$, with the property that $\{X \in \covectors' \mid X=+\}$ induces an Euclidean AOM, then $\covectors$ is said to be Mandel.
 Mandel~\cite[Theorem 7]{Man-82} proved (and it is up to today the largest class known to have this property) that these OMs satisfy the conjecture of Las Vergnas:

\begin{thm}[\cite{Man-82}]\label{thm:mandel}
 If an OM $G$ is Mandel, then it has a simplicial vertex.
\end{thm}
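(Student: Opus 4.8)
The plan is to recover Mandel's original argument in the graph-theoretic language developed in this paper, using the cocircuit graph $G^*$ of an OM $G$ and the orientation machinery just introduced. Since $G$ is Mandel, by definition there is an expansion in general position $G_1, G_2$ of some OM $G'$ (so that $G$ is obtained from $G'$ by this expansion, i.e.\ $\pi_e(G)=G'$ for the new element $e$) with both $G_1$ and $G_2$ Euclidean AOMs. First I would record what ``general position'' buys us: every maximal proper antipodal subgraph (cocircuit) of $G'$ lies entirely inside $G_1$ or entirely inside $G_2$, which means that the cocircuit graph $G^*$ of $G$ decomposes nicely with respect to $E_e$, and the lines of $G^*$ restrict to lines in the halfspaces $G_1=E_e^+$ and $G_2=E_e^-$. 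The goal is to locate a vertex of $G^*$ — a cocircuit of $G$, hence an OM of rank $r-1$ — from which, by induction on rank, we can extract a simplicial vertex of $G$; equivalently, one wants a cocircuit that, together with the element $e$ and the surrounding antipodal structure, forces a vertex of degree exactly $r(G)$.

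The key steps, in order: (1) Set up the induction on rank $r=r(G)$, with the base cases $r\le 2$ trivial (edges and even cycles, which always have simplicial vertices), and for the $\Theta$-Las Vergnas phrasing one uses rank $3$ as in Levi's result cited as~\cite{Lev-26}. (2) Use Euclideaness of $G_1$ (say) to run a ``linear programming'' style argument: orient $G^*$ — or rather the relevant portion living in $G_1$ — with respect to a generic objective, i.e.\ with respect to the element $e$ together with a sufficiently generic further element, and invoke strict acyclicity to guarantee the existence of a source/sink line configuration. In Euclidean AOMs any shortest-path or steepest-descent procedure terminates, and the terminal object is a vertex of the cocircuit graph whose incident lines are all oriented consistently away from it. (3) Translate this terminal cocircuit back into the tope graph: a cocircuit $C$ of $G$ all of whose adjacent cocircuits in $G^*$ behave monotonically with respect to the chosen orientation corresponds to a face of the big face lattice $\mathcal F(G)$ that is ``extreme'', and by Lemma~\ref{lem:simplicialvertex} (the structural lemma on simplicial vertices referenced later in the paper, which one is entitled to assume) such a $C$ is incident to a vertex $v$ of $G$ with $\deg(v)=r(G)$. (4) Conclude, and if one wants the stronger $\Theta$-Las Vergnas conclusion (Theorem~\ref{thm:simplical_on_theta}), observe that the choice of which halfspace ($G_1$ versus $G_2$) and which generic secondary element one orients by can be made to hit any prescribed $\Theta$-class $E_f$, so every element is incident to a simplicial vertex.

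The main obstacle will be step (2)–(3): making precise how strict acyclicity of the line-orientation of $G^*$ produces a genuinely \emph{extreme} cocircuit rather than merely a local sink, and then verifying that extremality in $\mathcal F(G)$ is exactly the combinatorial condition appearing in the simplicial-vertex lemma. This is where Mandel's insight really lives — the point is that in a Euclidean AOM one has a valid ``generic function'' whose level sets induce no directed cycles, so a greedy descent cannot loop, and the interplay between the expansion being in general position and the Euclideaness of both halfspaces is precisely what is needed to glue the two halves of $G^*$ together into one acyclically oriented structure on all of $G$. A secondary subtlety is bookkeeping the antipodality constraint along the cycles of $G^*$ (each cycle being the line graph of a zone graph $\zeta_F(G)$), since one must ensure the extreme cocircuit and its antipode are treated consistently; I would handle this by always working with antipodal pairs of cocircuits and noting that the orientation with respect to $E_e$ is itself compatible with the antipodal map up to reversal.
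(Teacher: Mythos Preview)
First, note that the paper does not itself prove Theorem~\ref{thm:mandel}; it is cited from Mandel's thesis, and the relevant comparison is against the paper's proof of the stronger Theorem~\ref{thm:simplical_on_theta}. Your setup contains a genuine confusion about the direction of the expansion: you write that ``$G$ is obtained from $G'$ by this expansion, i.e.\ $\pi_e(G)=G'$'', but this is backwards. Saying $G$ is Mandel means $G$ \emph{itself} admits an expansion in general position $G_1,G_2$ --- so $G_1,G_2\subseteq G$ are the data for building a \emph{larger} OM $\widetilde{G}$ with $\pi_e(\widetilde{G})=G$, and the new element $e$ is not a $\Theta$-class of $G$ at all. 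This matters because you then orient $G^*$ ``with respect to $E_e$'', which is meaningless in $G$. In the paper's argument one instead fixes an \emph{arbitrary} $\Theta$-class $E_e$ of $G$ and orients the lines of the Euclidean AOM $G_1\subseteq G$ with respect to that $E_e$; this is precisely why the stronger $\Theta$-Las Vergnas conclusion falls out.

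Beyond this, your inductive mechanism differs from the paper's and your key step is left as a hope. You propose induction on rank and a direct passage from a sink cocircuit to a simplicial vertex via Lemma~\ref{lem:simplicialvertex}. The paper instead inducts on the \emph{size} of $G$: having found (by strict acyclicity) a cocircuit $A\subseteq G_1\cap E_e^-$ all of whose out-neighbours in $G^*$ are crossed by $E_e$, it contracts every $\Theta$-class not crossing $A$ and distinct from $E_e$, landing in a strictly smaller Mandel OM $H\cong A\B K_2$; applies induction to obtain a simplicial $v\in H_1\setminus H_2$ incident with $E_e$; and then lifts the $r$ cocircuits of $H$ witnessing simpliciality of $v$ back to cocircuits of $G$ via Lemma~\ref{lem:exp_antipodal}, using the sink property of $A$ to verify they remain incident with $v$ in $G$. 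Your step~(3) --- ``such a $C$ is incident to a vertex $v$ with $\deg(v)=r(G)$'' --- is exactly where this contraction-and-lift work hides, and as stated it is an assertion rather than an argument.
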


In the following we go along the lines of Mandel's proof of Theorem~\ref{thm:mandel} from \cite{Man-82} while refining it to such an extent, that we can disprove Conjecture \ref{conj:mandel}. In particular, we use a stricter version of the induction in the proof to obtain $\Theta$-Las Vergnas properties of Mandel OMs.

We shall repeatedly use the following fact, that is trivial with the covectors definition of OMs but is a bit more complicated to prove in the graph view. Nevertheless, it was also proved with graphs in \cite[Lemma 6.2]{Kna-17}.

\begin{obs}\label{lem:exp_antipodal}
Let $G, G'$ be OMs such that $G'$ is an OM deletion (i.e.~a partial cube contraction) of $G$, with respect to an element $e$ (a $\Theta$-class $E_e$). Then for every cocircuit $X'$ of $G'$ there exists a cocircuit $X$ in $G$, such that $X_f=X'_f$ for all $f\neq e$. In graph theoretical view, there exists a maximal antipodal subgraph of $G$ that contracts to the maximal antipodal subgraph corresponding to $X'$. The same holds for COMs.
\end{obs}

The following is a characterization of simplicial vertices in an OM, that can be found in~\cite[Proposition 5, page 320]{Man-82} and~\cite[Proposition 1.4]{lv-80}. We provide a slightly different formulation together with a formulation in terms of the tope graph. Recall that in an OM $G$ a vertex $v$ is simplicial if the degree of $v$ coincides with the rank of $G$. Also recall, that by definition $G$ is $\Theta$-Las Vergnas if for every $\Theta$-class $E_e$ of $G$, there exists a simplicial vertex incident with $E_e$. In terms of the covectors description, this means that for every element $e$ there exists a simplicial tope $X$ and a covector $Y$ with $Y\prec X$ such that $Y_e = 0$. We will say that $X$ is \emph{incident} with $e$.

\begin{lem}\label{lem:simplicialvertex}
Let $G$ be an OM of rank~$r$. A tope $v\in G$ is simplicial if and only if there is a set of elements $F$ of size $r$ and $r$ cocircuits, such that for each $e \in F$ there is a corresponding cocircuit $X$ with $X_f=0$ for $f \in F\setminus\{e\}$, $X_e \neq 0$ and $X \leq v$. In the graph interpretation, this is equivalent to $r$ $\Theta$-classes $F$ and $r$ antipodal subgraphs incident with $v$ such that each is crossed by $F\setminus\{E_e\}$ but not by $E_e$, each for a different $E_e$. In this case, $v$ is incident with elements ($\Theta$-classes) in $F$.
%
\end{lem}

As a last basic ingredient for the Theorem \ref{thm:simplical_on_theta} we need the following. Let $G$ be the tope graph of a simple OM and $e$ one of its elements. Let $v$ be a tope in the positive halfspace of $G$ with respect to $e$. Since the lattice of covectors is atomistic and graded, there is a set of cocircuits such that their join is exactly $v$. In particular, for at least one of the latter cocircuits, say $X$, it must hold $X_e \neq 0$. 
%
This is:
\begin{obs}\label{obs:coloop}
Let $e$ be an element ($E_e$ be a $\Theta$-class) of a simple OM $G$. There exists a cocircuit $X$ (a maximal proper antipodal subgraph $A$) such that $X_e \neq 0$ ($A$ is not crossed by $E_e$).
\end{obs}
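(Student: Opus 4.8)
\textbf{Plan for proving Observation~\ref{obs:coloop}.}
The statement asserts: for any $\Theta$-class $E_e$ of a simple OM $G$ of rank $r$, there is a maximal proper antipodal subgraph (i.e.\ a cocircuit) $A$ such that neither $A$ nor $-A$ is crossed by $E_e$; equivalently, $A\subseteq E_e^+$ (and then $-A\subseteq E_e^-$) or the reverse. The plan is to work entirely inside $\mathcal{F}(G)$ and use the structural facts stated in Section~\ref{sec:prel}: the big face lattice is atomistic and graded with maximal chains of length $r+1$, the atoms being the maximal proper antipodal subgraphs (cocircuits), and any vertex $v\in G$ is the composition of the cocircuits below it in $\mathcal{F}(G)$. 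Since $G$ is simple, $E_e^+$ is nonempty, so fix any vertex $v\in E_e^+$.

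First I would write $v = A_1\circ A_2\circ\cdots\circ A_k$ where $A_1,\dots,A_k$ are the atoms of $\mathcal{F}(G)$ lying below $v$ (equivalently, the cocircuits incident with $v$, by Lemma~\ref{lem:simplicialvertex}-type reasoning, though for this observation one only needs that every element of an atomistic lattice is a join of atoms, translated via composition). Because $v_e = +$, the definition of composition forces that some $A_i$ has $(A_i)_e = +$, i.e.\ $A_i\subseteq E_e^+$: otherwise every $A_j$ would have $(A_j)_e\in\{-,0\}$ and the first nonzero entry in coordinate $e$ among $A_1,\dots,A_k$, which determines $(A_1\circ\cdots\circ A_k)_e=v_e$, would be $-$ or $0$, contradicting $v_e=+$. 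Set $A:=A_i$. Then $A$ is a maximal proper antipodal subgraph with $A\subseteq E_e^+$, hence $E_e$ does not cross $A$; and since $G$ is an OM (antipodal), $-A$ is also a cocircuit, with $-A\subseteq E_e^-$, so $E_e$ does not cross $-A$ either. This is exactly the conclusion.

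The only genuinely delicate point — and the one I would be most careful about — is the translation between ``join of atoms in $\mathcal{F}(G)$'' and ``composition of cocircuits equals $v$''. In OM theory this is the standard fact that every covector is a conformal composition of cocircuits (see~\cite{bjvestwhzi-93}), and the excerpt has already invoked exactly this in the paragraph preceding the observation (``there is a set of maximal proper antipodal subgraphs such that their composition is exactly $v$''), so it may be cited directly rather than reproved. Given that, the argument is short: it is really just the observation that in a composition $X_1\circ\cdots\circ X_k$ the $e$-coordinate is the first nonzero $e$-coordinate among the $X_j$, so a $+$ in the result must come from a $+$ in some term, and in an OM that term and its antipode together avoid crossing $E_e$. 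No case analysis on the rank or on the geometry of $G$ is needed; everything follows from the composition axiom (FS) and the atomistic/graded structure of $\mathcal{F}(G)$ quoted in Section~\ref{sec:prel}.
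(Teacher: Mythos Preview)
Your proposal is correct and follows essentially the same approach as the paper: pick a tope $v\in E_e^+$, use that the big face lattice is atomistic so that $v$ is a composition of cocircuits, and observe that one of these cocircuits must have $e$-coordinate $+$. Your write-up adds a bit more detail on why some $A_i$ must satisfy $(A_i)_e=+$ and spells out the conclusion for $-A$, but the underlying argument is identical to the short justification the paper gives in the paragraph immediately preceding the observation.
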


%
%

We are prepared to give the main theorem of the section.
In the proof we will make use of the \emph{direct sum} $G_1 \oplus G_2$ of OMs $G_1$ and $G_2$. In the OM language this is defined as the OM with covectors $\{(X_1 , X_2) \mid X_1 \in G_1, X_2 \in G_2\}$. Considering OMs as graphs, this is equivalent to the \emph{Cartesian product} of graphs $G_1, G_2$, being defined as the graph $G_1\B G_2$ whose vertices are $V(G_1) \times V(G_2)$ where two vertices $(x_1, y_1),(x_2, y_2)$ adjacent if and only if $x_1=x_2$ and $y_1$ is adjacent to $y_2$ in $G_2$, or $y_1=y_2$ and $x_1$ is adjacent to $x_2$ in $G_1$. We will denote by $K_2$ the complete graph on two vertices, or equivalently the unique simple OM with ground set of size $1$. 

\begin{thm}\label{thm:simplical_on_theta}
Let $G$ be a simple, Mandel OM of rank~$r$ and $e$ one of its elements ($E_e$ a $\Theta$-class of $G$). Then there is a simplicial tope (a vertex of degree $r$) in $G$, incident with $e$  (incident with $E_e$), i.e., $G$ is $\Theta$-Las Vergnas.
\end{thm}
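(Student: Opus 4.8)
The plan is to exploit the definition of Mandel directly: $G$ has an expansion in general position $G_1, G_2$ with both $G_1$ and $G_2$ Euclidean AOMs. The idea is to reduce the search for a simplicial vertex incident with a prescribed $\Theta$-class $E_e$ to an LP-type existence statement inside one of the Euclidean halfspaces, much as in Mandel's proof of Theorem~\ref{thm:mandel}, but now tracking the element $E_e$. First I would observe that, since $G_1, G_2$ is an expansion in general position of the deletion $G\setminus f = \pi_f(G)$ for the new coordinate $f$, the $\Theta$-class $E_e$ survives in $\pi_f(G)$, and each of $G_1, G_2$ is a halfspace of $G$; thus $E_e$ is a $\Theta$-class of (at least one of, and in fact both of) the Euclidean AOMs $G_1$ and $G_2$. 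The aim becomes: produce a simplicial vertex of $G$ incident with $E_e$ by finding an appropriate vertex in, say, $G_1$, and then checking that in the full OM $G$ the corresponding vertex still has degree exactly $r$.

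The core step is the Euclidean LP argument applied to the orientation of the cocircuit graph. In a Euclidean AOM $H = G_1$, fix the $\Theta$-class $E_e$ and consider the orientation of the cocircuit graph $H^*$ with respect to $E_e$; by definition this orientation is strictly acyclic. Strict acyclicity lets one run a directed-path / "steepest descent" argument along lines: starting from any cocircuit and following directed edges, one cannot cycle, so one reaches a cocircuit $A$ that is a sink in the sense that all lines through it are either undirected at $A$ or point into $A$. Using Lemma~\ref{lem:simplicialvertex} (the combinatorial characterization of simpliciality via a set $F$ of $r$ classes and $r$ incident maximal proper antipodal subgraphs, one "missing" each class of $F$), I would argue that such a locally optimal cocircuit, together with the face $E_e$, assembles the required family of maximal proper antipodal subgraphs witnessing a simplicial vertex of $H$ lying on $E_e$; Observation~\ref{obs:coloop} guarantees the existence of a maximal proper antipodal subgraph avoiding $E_e$ entirely, which supplies the needed "$F \setminus \{E_e\}$ crossed but $E_e$ not crossed" ingredient. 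The $\zeta_F$ zone-graph description of the cocircuit cycles (that all $A_i \cap A_{i+1}$ on a line are crossed by the same $F$, and $\zeta_F(H)$ is again a COM/AOM) is what I would use to descend rank by rank: after fixing $E_e$, work in $\zeta_{\{e\}}(H)$, which is a Euclidean AOM of one smaller rank, apply induction on $r$ to get a simplicial vertex there, and lift it across $E_e$ to a degree-$r$ vertex of $H$ incident with $E_e$. The Cartesian product is presumably invoked because near a simplicial vertex a maximal antipodal subgraph looks like $Q_r$ minus a vertex, and contracting $E_e$ splits a neighborhood as a product $Q_1 \B (\text{something})$; this is the bookkeeping that turns "simplicial in $\zeta_{\{e\}}(H)$" into "degree $r$ and incident with $E_e$ in $H$".

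Finally I would transfer the conclusion from the Euclidean halfspace $G_1$ back to the whole OM $G$. The vertex $v$ found in $G_1$ has $\deg_{G_1}(v) = r(G_1)$; I must check that expanding back to $G$ does not raise its degree, i.e. that $v \notin G_1 \cap G_2$ (equivalently, $v$ is not incident with the new $\Theta$-class $E_f$). Here the hypothesis that the expansion is \emph{in general position} is exactly what is needed: $E_f$ does not cross the maximal proper antipodal subgraphs, so the simplicial structure (the family of $r$ maximal proper antipodal subgraphs from Lemma~\ref{lem:simplicialvertex}) produced inside $G_1$ persists in $G$, and $v$ keeps degree $r = r(G)$ while remaining incident with $E_e$.

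The main obstacle I anticipate is the rank induction / lifting step: making precise that a simplicial vertex of the contracted Euclidean AOM $\zeta_{\{e\}}(G_1)$ lifts to a vertex of $G_1$ that is both simplicial and incident with $E_e$, and that Euclideanness is inherited by $\zeta_{\{e\}}(G_1)$ (i.e. that contracting an element of a Euclidean AOM along its cocircuit-graph description stays strictly acyclic). The strict-acyclicity bookkeeping along lines of $G^*$ — ensuring that the orientation restricted to the relevant sub-cycles behaves well under the zone-graph contraction, and that a sink cocircuit genuinely yields the $F$-certificate of Lemma~\ref{lem:simplicialvertex} rather than merely a low-degree vertex off $E_e$ — is where the real work lies; the transfer back to $G$ via general position is comparatively routine given Lemma~\ref{lem:exp_antipodal}.
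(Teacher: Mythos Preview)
Your proposal correctly locates the action in the Euclidean AOM $G_1$ and correctly identifies the strictly acyclic orientation of the cocircuit graph with respect to $E_e$ as the engine; the instinct to look for a ``sink'' cocircuit is exactly right. The gap is in the route you choose from there: passing to the zone graph $\zeta_{\{e\}}(G_1)$ and inducting on rank. A simplicial vertex of $\zeta_{\{e\}}(G)$ is an edge $uv\in E_e$ lying in exactly $r-1$ cocircuits of $G$ crossed by $E_e$; nothing bounds the number of cocircuits \emph{not} crossed by $E_e$ that contain the endpoint $u$, so $\deg_G(u)$ can exceed $r$. Concretely, for the rank-$4$ OMs with a mutation-free element $E_e$ (e.g.\ Figure~\ref{fig:tracyhall}), $\zeta_{\{e\}}(G)$ is a rank-$3$ OM and therefore has simplicial vertices, yet no vertex of $G$ incident with $E_e$ is simplicial. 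So the lifting step is not bookkeeping; it is the whole problem, and Euclideanness must enter in a way you have not specified. (Whether $\zeta_{\{e\}}(G_1)$ is again Euclidean is, as you note, itself unclear.) There is also a rank slip in your last paragraph: as a halfspace, $G_1$ has partial-cube rank at most $r-1$, so ``$\deg_{G_1}(v)=r(G_1)$'' does not give degree $r$.

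The paper uses the sink cocircuit differently. Having found, by strict acyclicity, a cocircuit $A\subset G_1\cap E_e^-$ whose out-neighbours in $G^*$ are all crossed by $E_e$, it \emph{deletes} (partial-cube contracts) every $\Theta$-class except $E_e$ and those crossing $A$. This yields an OM $H\cong A\,\square\,K_2$ of the \emph{same} rank $r$, in which every vertex is automatically incident with $E_e$. One checks that the images $H_1,H_2$ of $G_1,G_2$ again form a general-position expansion with strictly acyclic inherited orientation, so $H$ is Mandel; induction on \emph{size} then yields a simplicial $v\in H_1\setminus H_2$. The sink property of $A$ is used a second time in the lift: the $r$ cocircuits of $H$ through $v$ include $A$ itself, and each of the remaining $r-1$ lies on a line out of $A$ crossed by $E_e$, so by the choice of $A$ it already meets $A$ in a rank-$(r-2)$ face in $G$, giving the Lemma~\ref{lem:simplicialvertex} certificate for $v$ in $G$. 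The key move you are missing is thus: reduce by deletion to $A\,\square\,K_2$ rather than by zone-graph to lower rank, so that incidence with $E_e$ comes for free and the sink property controls the lift.
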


\begin{proof}
By the definition of Mandel OM, there exists a single-element extension (an expansion) of $G$ in general position such that the two halfspaces in the obtained OM are Euclidean AOMs. Both halfspaces correspond to intersecting subsets of covectors of $G$, say $G_1$ and $G_2$. Speaking of OMs as graphs this means that $G$ is expanded along $G_1$ and $G_2$. Let $E_e\in\mathcal{E}$ be a $\Theta$-class of $G$. We prove the slightly stronger assertion (than asserted in the theorem) that there exists a simplicial tope $v$ (i.e. of degree $r$) in $G_1 \setminus G_2$ incident with $E_e$, for any chosen $G_1,G_2$, making $G$ Mandel.
We will proceed by induction on the size of $G$ and distinguish two cases:

\begin{case}\label{case:noproduct}
 $G$ is not the direct sum (the Cartesian product) with factor $K_2$, where the factor $K_2$ corresponds to element $e$ ($\Theta$-class $E_e$).
\end{case} 


By Observation~\ref{obs:coloop} and the antipodality, $G$ has at least two cocircuits $X_1,X_2$ (maximal antipodal subgraphs) with the property $(X_1)_e,(X_2)_e \neq 0$ (not crossed by $E_e$).  Then at least one lies in $G_1$. Without loss of generality assume that it is in $E_e^-$, otherwise reorient $E_e$. This is, there is a
line in $G_1$ that has at least one cocircuit $X_1$ (an antipodal subgraph of rank $r-1$) with $(X_1)_e = -$ (in $E_e^-$).

Orient the lines of $G_1$ with respect to $E_e$. Note that every line of $G_1$ is a subline of a cycle in $G^*$ that either has all its cocircuits $X$ with $X_e=0$ (i.e. maximal proper antipodal subgraphs crossed by $E_e$) or exactly two of them. Since the expansion according to $G_1$ and $G_2$ is in general position this implies that every line of $G_1$ is either crossed by $E_e$ in exactly one cocircuit, or in all of them. Thus, all the lines are oriented, except the ones with all its cociruits $X$ (antipodal subgraphs) having $X_e=0$ (being crossed by $E_e$).

By the definition of Euclideaness, the orientation is strictly acyclic, hence we can find in $G_1\cap E_e^-$ a cocircuit (i.e. an antipodal subgraph of rank~$r-1$) $X$ such that each of its out-neighbors $Y$ in the cocircuit graph has $Y_e =0$  (i.e. is intersected by $E_e$). Let $X^0$ be the set of all elements ($\Theta$-classes $E_f$), such that $X_f=0$ (that cross $X$). Let $H$ be the OM-deletion (a partial cubes contraction) of $G$ along all its elements ($\Theta$-classes) besides $e$ ($E_e$) and the ones in $X^0$. Let $H_1, H_2$ be the respective images of $G_1, G_2$ in $H$. Then by Observation \ref{lem:exp_antipodal}, each covector of $H_1$ and $H_2$ is an image of a covector in $G_1$ or $G_2$, respectively. In graph theoretical language, this means that all the antipodal subgraphs of $H$ lie completely in $H_1$ or in $H_2$. This proves that $H_1$ and $H_2$ define an expansion in general position of $H$. Moreover, also the orientation of the lines in $H_1$ are inherited from the orientation of lines in $G_1$. Hence
the orientation of $H$ with respect to $E_e$ is strictly acyclic as well.  This proves that $H$ is Mandel by the expansion in general position according to $H_1$ and $H_2$.

By definition, $H$ is obtained by deleting elements $f$ (contracting $\Theta$-classes), for which  $X_f\neq 0$ (not crossing $X$). It is not hard to see, that the sub-lattice of covectors $Y$ with $X\leq Y$ has a maximal chain of length $r-1$ which is unaffected by the deletion. Thus, the rank of $H$ is $r$ since it properly contains $X$ of rank~$r-1$. In graph theoretical sense, this also follows from the gatedness of antipodal subgraphs in an OM implying that the structure of the antipodal subgraph corresponding to $X$ is not affected by any of the contractions. We will denote by $X'$ the cocircuit obtained from $X$ by the deletion.
%

Let $\overline{X'}$ be the antipode of $X'$ in $H$. Then $\overline{X'}$ is also a cocircuit of $H$ and only $e$ separates $X'$ and $\overline{X'}$. As graphs they are disjoint antipodal subgraph connected by edges in  $E_e$. Thus $H\cong H' \oplus K_2$, where $H'$ consist of all $Y$ with $X\leq Y$ in $H$. In the graph language, $H \cong A \, \square \, K_2$, where $A$ is the antipodal subgraph corresponding to $X'$. Since we are in the Case~\ref{case:noproduct}, this gives that $H$ is strictly smaller than $G$.

By the induction assumption, $H$ has a tope $v'$ of degree $r$ in $H_1 \setminus H_2$, incident with $e$ (with $E_e$). In fact, since $H\cong H' \oplus K_2$ (or $H\cong H' \square K_2$) with the $K_2$ factor corresponding to $e$ ($E_e$), the latter holds for all the topes in $H$. By Lemma~\ref{lem:simplicialvertex}, there is a set $\mathcal{B}$ of $r$ cocircuits and a set $F$ of $r$ $\Theta$-classes, such that for each $E_g \in F$ there is a corresponding cocircuit $Y$ with $Y_f=0$ for $f \in F\setminus\{E_g\}$, $Y_g \neq 0$ and $Y \leq v'$. Note that $E_e \in F$. In other words, there are $r$ antipodal subgraphs of rank $r-1$ incident with $v'$ such that $v'$ has degree $r-1$ in each member. Since a tope of degree $r$ cannot have more than $r$ covectors $Y$ with $Y \leq v'$, we have $X'\in \mathcal{B}$. By assumption $v'\in H_1 \setminus H_2$  and since $H_1, H_2$ define an expansion in general position, all the members of $\mathcal{B}$ are in $H_1$. By Observation \ref{lem:exp_antipodal}, there is a set $\mathcal{C}$ of $r$ cocircuits of $G$ together with the same set $F$ of $r$ $\Theta$-classes, such that each member of $\mathcal{C}$ corresponds to to a member of $\mathcal{B}$ in $H$ (corresponding antipodal subgraphs of $\mathcal{C}$ contract to corresponding antipodal subgraphs of $\mathcal{B}$). Moreover, since the members of $\mathcal{B}$ are in $H_1$, the cocircuits in $\mathcal{C}$ are in $G_1$. Clearly, $X\in\mathcal{C}$. 

Consider a tope $v \in G_1$ such that $X\leq v$ ($v$ is an element of the antipodal subgraph corresponding to $X$) and the OM-deletion (partial cube contraction) maps $v$ to $v'$. To prove that $v$ has degree $r$ in $G$, by Lemma~\ref{lem:simplicialvertex}, it suffices to prove that for each cocircuit $Y$ in $\mathcal{C}$ it holds $ Y\leq v$ (the antipodal subgraphs in $\mathcal{C}$ include $v$). Let $Y \in  \mathcal{C}\setminus \{X\}$, and $Y'$ the corresponding cocircuit (antipodal subgraph) in $\mathcal{B}$. Then $Y'$ and $X'$ are adjacent in $H^*$, hence $X$ and $Y$ lie on a line in $G_1$. Since $Y'_e= 0$ also $Y_e=0$, i.e. the corresponding antipodal subgraphs are crossed by $E_e$. Thus the line they are both on is oriented from $X$ to $Y$. Since $X$ was chosen to have all its out-neighbors crossed by $E_e$, $Y$ must be adjacent to it. This implies that for each $E_f \notin X^0 \cup \{E_e\}$, $Y_f \neq -X_f$. In particular, since $X \leq v$ and by assumption $Y' \leq v'$, the former implies that $Y \leq v$. In the tope graphs this is seen as $v$ being in the intersection of the antipodal subgraphs corresponding to $X$ and $Y$ since $v'$ is the intersection of the antipodal subgraphs corresponding to $X'$ and $Y'$.


We have found a tope $v$ that has degree $r$. Since $e$ (or $E_e$) is in $F$, it is incident with it. By construction it lies in $G_1 \setminus G_2$.

\begin{case}
 $G$ is the direct sum (the Cartesian product) with factor $K_2$, where the factor $K_2$ corresponds to element $e$ ($\Theta$-class $E_e$).
\end{case}

If $G$ has an element $f$ (a $\Theta$-class $E_f$) such that $G$ is not the direct sum with factor $K_2$ corresponding to it, then by Case~\ref{case:noproduct}, $G$ has a simplicial tope $v$ (of degree $r$) in $G_1 - G_2$. Moreover, in this case all the vertices of $G$ are incident with $E_e$, in particular also $v$ is.

If all of the elements ($\Theta$-classes) of $G$ correspond to factors $K_2$, then $G$ is a hypercube and all its vertices are simplicial and incident to $E_e$.
\end{proof}

OMs with a $\Theta$-class not incident to a simplicial vertex have been found of different sizes~\cite{Ric-93, Bok-01, Tra-04}. We conclude:

\begin{cor}\label{cor:Mandelwaswrong}
 There exist OMs that are not Mandel. The smallest known such OM has $598$ vertices (topes), isometric dimension $13$ (elements) and is uniform of rank~$4$. See Figure~\ref{fig:tracyhall}. 
\end{cor}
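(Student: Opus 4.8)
The plan is to deduce the corollary directly from Theorem~\ref{thm:simplical_on_theta} by contraposition, using the OMs with mutation-free elements that are already known in the literature. Theorem~\ref{thm:simplical_on_theta} states that every simple Mandel OM is $\Theta$-Las Vergnas, i.e., every $\Theta$-class is incident with a vertex of degree equal to the rank. Hence any simple OM that possesses a $\Theta$-class incident with no simplicial vertex — equivalently, an OM with a mutation-free element — fails to be Mandel. So the whole statement reduces to pointing at a concrete such OM and then to a remark about minimality.

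For the existence part one invokes the known constructions: Richter-Gebert's uniform rank-$4$ OM on $21$ elements with a mutation-free element~\cite{Ric-93}, its reduction to $17$ elements~\cite{Bok-01}, and Tracy Hall's example on $13$ elements~\cite{Tra-04}. Each of these is a simple uniform OM of rank~$4$ having a $\Theta$-class $E_e$ not incident with any simplicial vertex, so by the contrapositive of Theorem~\ref{thm:simplical_on_theta} none of them is Mandel. This already proves the first sentence of the corollary and thereby refutes Conjecture~\ref{conj:mandel}. For the quantitative claim one records the combinatorial data of Hall's example after translating it into the partial-cube language of Theorem~\ref{thm:char}: it is an antipodal partial cube $G$ of isometric dimension $13$ whose proper antipodal subgraphs are hypercubes (hence a UOM) of rank~$4$, with $598$ topes, and with a $\Theta$-class $E_e$ for which the subgraph of $G$ induced on the vertices incident with $E_e$ is an OM of rank~$3$ containing no simplicial vertex of $G$ — exactly the picture in Figure~\ref{fig:tracyhall}. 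That $G$ is the smallest \emph{known} non-Mandel OM is then simply a statement about the current state of the literature, so it needs no proof beyond the citations~\cite{Ric-93,Bok-01,Tra-04}.

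The only real work is bookkeeping: one must confirm that Hall's object genuinely has a mutation-free element and extract the precise vertex count. Concretely this means importing the explicit combinatorial data from~\cite{Tra-04}, building the tope graph, and checking the degrees of the topes on the distinguished $\Theta$-class; this is a finite verification, easily done by computer. No conceptual obstacle remains once Theorem~\ref{thm:simplical_on_theta} is in hand.
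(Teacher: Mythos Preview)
Your proposal is correct and follows exactly the paper's reasoning: the corollary is stated immediately after the sentence ``OMs with a $\Theta$-class not incident to a simplicial vertex have been found of different sizes~\cite{Ric-93, Bok-01, Tra-04}. We conclude:'', so the paper's own argument is precisely the contrapositive of Theorem~\ref{thm:simplical_on_theta} applied to these known examples. Your additional bookkeeping remarks about verifying Hall's data are more explicit than the paper, but the underlying approach is identical.
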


%

\section{Corners and corner peelings}\label{sec:sec5corners}

In~\cite{Cha-18} the corners of lopsided sets (LOPs) have been defined as vertices of degree $r$ that lie in a unique maximal hypercube subgraph of dimension $r$. This has a nice correspondence to the results from Section \ref{sec:mandel}:
If $G$ is a LOP and also an AOM, i.e, a halfspace $E^+_e$ of an OM $G'$, then $G$ has a corner if and only if $G'$ has a simplicial vertex incident to $E_e$. By the examples from~\cite{Cha-18,Tra-04,Ban-96} this proves that there are LOPs without corners. This was first observed in~\cite{Cha-18}, where it is translated to an important counter example in computational learning theory.

In the present section we introduce corners and corner peelings for general COMs. The first subsection is concerned with the first definitions and results, and in particular contains a proof for existence of corner peelings of realizable COMs. The second subsection contains corner peelings for COMs of rank~$2$ and hypercellular graphs. Since the results in this section are more concerned with graphs, we use the graph theoretic language and use covectors only when needed.

\subsection{First definitions and basic results}
We will approach our general definition of corner of a COM, that generalizes corners on LOPs and has strong connections with simplicial vertices in OMs.
The intuitive idea of a corner in a COM, is a set of vertices whose removal gives a new (maximal) COM. As a matter of fact it is convenient for us to first define this remaining object and moreover within an OM.

Recall the definition of an expansion in general position from Section~\ref{sec:prel}.
We will say that the subgraph $T$ of an OM $H$ is a \emph{chunk} of $H$, if $H$ admits an expansion in general position $H_1, H_2$, such that $T=H_1$. We call the complement $C=H\setminus H_1$ a \emph{corner} of $H$. In the case that $H$ has rank 1, i.e. $H$ is isomorphic to an edge $K_2$, then a corner is simply a vertex of $H$.

This definition extends to COMs by setting $C$ to be a \emph{corner} of a COM $G$ if $C$ is contained in a unique maximal antipodal subgraph $H$ and $C$ is a corner of $H$. 
We need two more helpful observation:

\begin{lem}\label{lem:isometric}
 If $G'$ is an isometric subgraph of a COM $G$ such that the antipodal subgraphs of $G'$ are antipodal subgraphs of $G$, then $G'$ is a COM. 
\end{lem}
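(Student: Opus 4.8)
The plan is to verify the graph-theoretic characterization of COMs from Theorem~\ref{thm:char}(i): a graph is a COM if and only if it is a partial cube all of whose antipodal subgraphs are gated. So I must check two things for $G'$, namely that it is a partial cube, and that each of its antipodal subgraphs is gated in $G'$.

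The first point is immediate. Since $G$ is a COM it is a partial cube, hence isometrically embeds into some hypercube $Q_n$; as $G'$ is an isometric subgraph of $G$ and a composition of isometric embeddings is again isometric, $G'$ is an isometric subgraph of $Q_n$, i.e.\ a partial cube.

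For the second point, let $A$ be an antipodal subgraph of $G'$. By hypothesis $A$ is also an antipodal subgraph of $G$, and since $G$ is a COM, $A$ is gated in $G$. I claim $A$ is gated in $G'$, with the same gates. Fix $x\in V(G')\subseteq V(G)$ and let $v_x\in A$ be the gate of $x$ in $A$ with respect to $G$, so that $d_G(x,u)=d_G(x,v_x)+d_G(v_x,u)$ for every $u\in A$. Since $A\subseteq G'$, all of $x,v_x,u$ lie in $G'$, and as $G'$ is isometric in $G$ the same equality holds with $d_{G'}$ in place of $d_G$:
$$d_{G'}(x,u)=d_{G'}(x,v_x)+d_{G'}(v_x,u)\qquad\text{for all }u\in A.$$
Concatenating a shortest path from $x$ to $v_x$ with a shortest path from $v_x$ to $u$ inside $G'$ therefore yields a shortest $x$-$u$ path in $G'$ through $v_x$; hence $v_x$ is a gate for $x$ in $A$ within $G'$. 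As $x$ was arbitrary, $A$ is gated in $G'$.

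Thus $G'$ is a partial cube whose antipodal subgraphs are all gated, and Theorem~\ref{thm:char}(i) yields that $G'$ is a COM. I do not expect a genuine obstacle here: gatedness is a purely metric condition, so it passes to isometric subgraphs as soon as one knows that the antipodal subgraph $A$ of $G'$ is literally the same subgraph on the $G$ side — which is precisely what the hypothesis supplies. The only point requiring a line of care is this identification, together with the trivial remark that $A\subseteq G'$ so that distances measured in $A$, in $G'$, and in $G$ all agree.
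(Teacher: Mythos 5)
Your proof is correct and follows exactly the same route as the paper's: invoke Theorem~\ref{thm:char}(i), note that $G'$ is a partial cube as an isometric subgraph of one, and transfer gatedness of the antipodal subgraphs from $G$ to $G'$ using the hypothesis that they coincide. The paper states this in two sentences; you merely spell out the metric verification that a gate in $G$ remains a gate in the isometric subgraph $G'$, which is a correct and welcome elaboration.
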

\begin{proof}
 By Theorem \ref{thm:char} all antipodal subgraphs of $G$ are gated, but since $G'$ is an isometric subgraph and it has no new antipodal subgraph also the antipodal subgraphs of $G'$ are gated. Thus, by Theorem \ref{thm:char} $G'$  is a COM.  
\end{proof}

We are now ready to prove that chunks and corners as we defined them achieve what we wanted.  This proof uses the correspondence between sign-vectors and convex subgraphs as introduced in Section~\ref{sec:prel}.

\begin{lem}\label{lem:cornerisweakcorner+peeling}
 If $C$ is a corner of a COM $G$, then the chunk $G\setminus C$ is an inclusion maximal proper isometric subgraph of $G$ that is a COM. 
\end{lem}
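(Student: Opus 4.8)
The plan is to prove the statement in three parts: first that $G' := G \setminus C$ is a COM, then that it is isometric in $G$, and finally that it is inclusion-maximal among proper isometric subgraphs that are COMs. For the first part I would invoke Lemma~\ref{lem:isometric}: it suffices to check that $G'$ is an isometric subgraph of $G$ whose antipodal subgraphs are already antipodal subgraphs of $G$. By definition of corner, $C$ lives in a unique maximal antipodal subgraph $H$ of $G$, and $C = H \setminus H_1$ where $H_1, H_2$ is an expansion of $H$ in general position. So $G' = (G \setminus H) \cup H_1$, obtained by replacing $H$ by its chunk $H_1$. The first substep is to verify that $H_1$ is itself an isometric (in fact convex) subgraph of $H$: since $H_1$ is one half of a (possibly peripheral) expansion of the partial cube $H$, it is a halfspace-type piece and hence convex in $H$; combined with convexity of $H$ in $G$, this gives that $G'$ is an isometric subgraph of $G$. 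The second substep is that any antipodal subgraph of $G'$ is an antipodal subgraph of $G$. An antipodal subgraph $A$ of $G'$ is a convex subgraph of $G'$ that is antipodal on its own; I would argue that $A$ is also convex in $G$ (convexity is transitive along isometric subgraphs whose inclusion is convex, which holds here) so $A$ is an antipodal subgraph of $G$ and we are done by Lemma~\ref{lem:isometric}.

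For maximality I would argue by contradiction: suppose $G' \subsetneq G'' \subsetneq G$ with $G''$ an isometric subgraph of $G$ that is a COM. Since $G'' \supsetneq G'$ and $G \setminus G' = C \subseteq H$, the extra vertices of $G''$ beyond $G'$ all lie in $C = H \setminus H_1$. Consider $H \cap G''$; it is an isometric subgraph of $H$ that properly contains $H_1$ (because $G'' \supsetneq G'$) and is properly contained in $H$ (because $G'' \subsetneq G$, and $H$ is a maximal antipodal subgraph, so if $H \subseteq G''$ then... — here I need to rule out $H \subseteq G''$, which would force $G'' = G$ by maximality of $H$ as an antipodal subgraph together with $G'' \supseteq G'$ covering everything outside $H$; so indeed $H \cap G''$ is strictly between $H_1$ and $H$). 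Now I want to contradict the assumption that $H_1, H_2$ is an expansion of $H$ \emph{in general position}: an expansion in general position is characterized by the property that each maximal proper antipodal subgraph of $H$ lies entirely in $H_1$ or entirely in $H_2$ but not in $H_1 \cap H_2$, which should translate (via the covector picture and the fact that $H$ is an OM, so its big face lattice is atomistic) into $H_1$ being a maximal proper isometric COM-subgraph of $H$ — i.e.\ there is no isometric COM-subgraph strictly between $H_1$ and $H$. Applying this to $H \cap G''$ (which is a COM, being a convex subgraph of the COM $G''$) yields the contradiction.

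The main obstacle I expect is the maximality step, specifically the claim that a chunk $H_1$ coming from an expansion \emph{in general position} is a maximal proper isometric COM-subgraph of the OM $H$ — this is really the crux and is where the ``general position'' hypothesis is used essentially. The supporting facts are: (a) the convex subgraphs of $H$ strictly between $H_1$ and $H$ correspond, in the halfspace/covector language, to adding back some but not all of the ``missing'' antipodal subgraphs of $C$; and (b) because the expansion is in general position, every maximal proper antipodal subgraph of $H$ is on one side, so any intermediate convex subgraph $H_1 \subsetneq K \subsetneq H$ would fail to be antipodal in the needed way or would contain a maximal proper antipodal subgraph only partially, contradicting either gatedness (Theorem~\ref{thm:char}) or the general-position condition. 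I would make this precise by taking a vertex $x \in K \setminus H_1$ and its antipode $-x$ in $H$ (which lies in $C$, since $H_1 = -H_2$ fails... actually $H_1$ is peripheral so not antipodal, and $-x \notin H_1$), and showing the convex hull in $H$ of $H_1 \cup \{x\}$ already recaptures a maximal proper antipodal subgraph that was supposed to be entirely in $H_2$. A cleaner alternative, if available, is to cite or lift the analogous statement for OMs — that a single-element extension in general position corresponds to a maximal step in the appropriate poset — but absent that I would carry out the vertex-chasing argument sketched above. The isometry and COM-closure parts are routine given Lemma~\ref{lem:isometric} and transitivity of convexity.
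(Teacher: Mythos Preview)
Your overall strategy --- reduce to the unique maximal antipodal subgraph $H$, handle the OM case there, then invoke Lemma~\ref{lem:isometric} for the ambient COM --- matches the paper's. However, there is a genuine error and a genuine gap.

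\textbf{The error.} You claim that $H_1$ is convex in $H$ because it is ``a halfspace-type piece'' of an expansion. This is false. The sets $H_1,H_2$ overlap in $H$ and only become halfspaces \emph{after} the expansion; in $H$ itself $H_1$ need not be convex. Concretely, take $H=Q_3$ with the general-position expansion $H_1=Q_3\setminus\{v\}$, $H_2=Q_3\setminus\{-v\}$: the three neighbours of $v$ lie in $H_1$, yet $v$ sits on a shortest path between any two of them, so $H_1$ is not convex. What does hold --- and what the paper uses --- is that $H_1$ is \emph{isometric} in $H$ (part of the definition of a partial-cube expansion) and, more to the point, that $H_1$ is a halfspace of the \emph{expanded} OM and hence an AOM in its own right. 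That is how the paper obtains that $T=H_1$ is a COM, without needing convexity in $H$ or Lemma~\ref{lem:isometric} for the OM case. Your transitivity-of-convexity argument for ``every antipodal subgraph of $G'$ is antipodal in $G$'' then collapses as well, since $G'$ is not convex in $G$.

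\textbf{The gap.} For maximality --- which you rightly flag as the crux --- your vertex-chasing sketch (pick $x\in K\setminus H_1$, look at $-x$, take convex hulls) does not obviously close, and the ``general-position'' hypothesis is not put to work in any concrete way. The paper's argument is different and sharper: assume $T\subsetneq R\subseteq H$ with $R$ a COM; pick an antipodal subgraph $X$ of $H$ with $X\subseteq R$ but $X\not\subseteq T$, maximal with this property. General position forces $X\subseteq -T$, so $-X\subseteq T\subseteq R$. Now apply the strong elimination axiom (SE) inside $R$ to $X$ and $-X$, and then iteratively to the resulting $Z$ and $X$, each time strictly enlarging the zero set while shrinking the separator, until one reaches an antipodal subgraph $\widetilde Z\subseteq R$ with $\widetilde Z\supsetneq X$ --- contradicting the maximality of $X$. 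The key tool you are missing is precisely this iterated use of (SE) as an axiom available in \emph{any} COM $R$; general position enters only to guarantee that both $X$ and $-X$ live in $R$ so that (SE) can be fired.
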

\begin{proof}
Let us first consider the case where $G=H$ is an OM.
 Let $T, -T$ be an expansion of $H$ in general position, i.e., $-T$ is the set of antipodes of $T$. Since expansions in general position are OM-expansions, $T$ is a halfspace of an OM. Thus, $T$ is a COM --- even an AOM. 
This proves that $T$ is a sub-COM. Assume that it is not maximal and let $R\supseteq T$ be a COM contained in $H$. Let $X\subseteq R$ be an antipodal subgraph of $H$ that is not completely in $T$, and  is maximal with this property. Since the expansion is in general position, it holds $X \subseteq -T$ and $-X\in T \subseteq R$. Let $E_e\in S(X,-X)$, i.e., $E_e$ separates $X$ from $-X$. Such $E_e$ clearly exists, since $R$ is a proper sub-COM of $H$, thus $X\neq H$.

Considering $R$ as a COM we apply (SE) to $X,-X$ with respect to $E_e$ in order to obtain $Z\subset R$ that is crossed by $E_e$.  Note that $X$ and $-X$ are crossed by the same set of $\Theta$-classes $X^0$. By (SE) the set $Z^0$ of $\Theta$-classes crossing $Z$ strictly contains $X^0$. Thus, if $S(Z,X)=\emptyset$, then $Z$ is an antipodal subgraph containing $X$, i.e., $X$ was not a maximal antipodal subgraph of $R$. Let otherwise $E_f\in S(Z,X)$. Apply (SE) to $X,Z$ with respect to $E_f$ in order to obtain $Z'\subset R$ which is crossed by $E_f$. Since $Z^0\supsetneq X^0$, we have $Z'^0\supsetneq X^0$ and furthermore $S(Z',X)\subsetneq S(Z,X)$. Proceeding this way, we will eventually obtain an $\widetilde{Z}\subseteq R$ with $\widetilde{Z}^0\supseteq X^0$ and $S(\widetilde{Z},X)=\emptyset$. Thus, $\widetilde{Z}\in R$ is an antipodal subgraph containing $X$. By the choice of $X$, $\widetilde{Z}$ is not completely in $T$. This violates the assumption that $X$ was maximal. Thus, $R=T$.
 
 Now, let $G$ be a COM that is not an OM and let $H$ be the unique maximal antipodal subgraph of $G$ containing $C$. By the above $T=H\setminus C$ is an isometric subgraph of $H$ and a COM. Now, it follows that $G\setminus C$ is an isometric subgraph of $G$. Namely, since no vertex of $C$ is adjacent to a vertex of $G\setminus H$ and $T$ being an isometric subgraph of $H$, all shortest paths in $G$ through $C$, can be replaced by shortest paths through $T$. 
 Finally, Lemma~\ref{lem:isometric} implies that $G\setminus C$  is a COM. {Maximality follows from the first paragraph.}
\end{proof}

Recall that simplicial vertices in LOPs are called corners.
Before providing further central properties of corners in COMs, let us see that we indeed generalize corners of LOPs.

\begin{prop}\label{prop:cornersofLOPs}
 A subset $C$ of the vertices of a LOP $G$ is a corner if and only if $C=\{v\}$ is contained in a unique maximal cube of $G$.
\end{prop}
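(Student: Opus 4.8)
The statement asserts that corners of LOPs as defined in this paper (via chunks of the unique maximal antipodal subgraph) coincide with the classical corners, namely single vertices lying in a unique maximal cube. Recall from Theorem~\ref{thm:char}(iv) that a LOP is precisely a partial cube all of whose antipodal subgraphs are hypercubes; in particular the maximal antipodal subgraphs of $G$ are exactly the maximal cubes of $G$. So the task reduces to understanding corners of a \emph{cube} $Q_d$, i.e., chunks of $Q_d$ coming from expansions in general position, and then gluing this local picture back into $G$.

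\textbf{Step 1: Corners of a cube are single vertices.} First I would analyze the base case where $H = Q_d$ is itself a cube (an OM of rank $d$). An expansion in general position $H_1, H_2$ of $H$ is one that is properly peripheral on every maximal proper antipodal subgraph. The maximal proper antipodal subgraphs of $Q_d$ are its facets $Q_{d-1}$. An antipodal expansion is determined by $H_1 = -H_2$, and "properly peripheral on every facet" forces each facet of $Q_d$ to lie entirely in $H_1$ or entirely in $H_2$ (and not in $H_1 \cap H_2$). If $H_1$ omitted two or more vertices, one checks that some facet would be split between $H_1$ and $H_2$ (two vertices of $Q_d$, $d\ge 2$, always lie together in some facet, or their complements do), contradicting general position; and $H_1$ cannot be all of $H$ since the expansion is proper. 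Hence $H_1 = Q_d$ minus a single vertex, so the corner $C = H \setminus H_1$ is a single vertex $\{v\}$. Conversely, for any vertex $v$ of $Q_d$, the subgraph $Q_d \setminus \{v\}$ is readily seen to be a chunk: the corresponding expansion (fill in the antipode $-v$ appropriately) is peripheral and proper on every facet, since each facet either avoids $v$ (stays in $H_1$) or contains $v$ (and then its antipodal facet stays in $H_1 \cap$\dots), so it is in general position. Thus the corners of $Q_d$ are exactly its $2^d$ vertices.

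\textbf{Step 2: Transfer to $G$.} Now let $G$ be an arbitrary LOP. If $C$ is a corner of $G$, then by definition $C$ is contained in a unique maximal antipodal subgraph $H$ of $G$ and $C$ is a corner of $H$; since $G$ is a LOP, $H$ is a cube, so by Step~1, $C = \{v\}$ is a single vertex, and $H$ is the unique maximal cube containing $v$. Conversely, if $\{v\}$ lies in a unique maximal cube $H = Q_d$ of $G$, then $H$ is the unique maximal antipodal subgraph containing $v$ (maximal antipodal subgraphs of a LOP are exactly its maximal cubes), and by Step~1 $\{v\}$ is a corner of $H$; hence $\{v\}$ is a corner of $G$ by definition. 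The case $d = 1$ (where $H = K_2$ and a corner is just a vertex of $H$, per the definition in the excerpt) and $d=0$ ($v$ isolated in no larger cube, so $\{v\}=H$ is trivially its own chunk/corner via the rank-$0$ convention) are handled directly.

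\textbf{Main obstacle.} The only real content is Step~1, and within it the claim that general position forces $H_1$ to omit exactly one vertex of $Q_d$ — i.e., that no "larger" chunk (omitting two or more vertices, or a face) of a cube can arise from an expansion in general position. This is a small combinatorial verification about how the two vertices of $Q_d$ not in $H_1$ would have to be related, using that an antipodal expansion satisfies $H_1 = -H_2$ together with the facet-by-facet peripherality condition; I expect it to be short but it is the crux. Everything else is bookkeeping with the definitions and the identification of maximal antipodal subgraphs of LOPs with maximal cubes, which is immediate from Theorem~\ref{thm:char}.
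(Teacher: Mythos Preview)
Your overall strategy is correct and Step~2 is exactly how the paper proceeds. The difference lies in Step~1, the classification of corners of a cube.

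The paper does not carry out your direct combinatorial analysis of general-position expansions of $Q_d$. Instead it first exhibits, for each vertex $v$, the explicit expansion $H_1=Q_d\setminus\{v\}$, $H_2=Q_d\setminus\{-v\}$ and checks it is in general position, so $\{v\}$ is a corner. Then it invokes Lemma~\ref{lem:cornerisweakcorner+peeling} (chunks are inclusion-maximal proper sub-COMs): since $Q_d\setminus\{v\}$ is already a chunk, any putative chunk $Q_d\setminus C$ with $|C|\ge 2$ would sit strictly inside the COM $Q_d\setminus\{v\}$ for some $v\in C$, contradicting maximality. Hence single vertices are exactly the corners of $Q_d$.

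Your route avoids Lemma~\ref{lem:cornerisweakcorner+peeling} by arguing directly that general position plus $H_1=-H_2$ forces $|Q_d\setminus H_1|=1$. The cleanest way to phrase this (your parenthetical about two vertices sharing a facet is not quite right, since antipodal vertices do not) is: for each coordinate $i$, general position places exactly one of the two opposite facets $F_i^+,F_i^-$ in $H_1$ and the other in $H_2$; the union of the $d$ facets lying in $H_1$ then covers all of $Q_d$ except one vertex. This is short, as you anticipated. Both arguments are valid; the paper's is quicker given that Lemma~\ref{lem:cornerisweakcorner+peeling} is already in hand, while yours is self-contained and makes the structure of general-position expansions of cubes explicit.
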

\begin{proof}
OMs that are LOPs are cubes, so let $v$ be a vertex of ${Q}_n$. The expansion with $H_1={Q}_n\setminus\{v\}$ and $H_2={Q}_n\setminus\{-v\}$ clearly is antipodal. Moreover, every proper antipodal subgraph of ${Q}_n$ is contained in either $H_1$ or $H_2$. Thus, this expansion is in general position. Consequently $v$ is a corner or ${Q}_n$. Since chunks are maximal sub-COMs, by Lemma \ref{lem:cornerisweakcorner+peeling}, single vertices are precisely the corners of ${Q}_n$. 

If now $v$ is a corner of a LOP $G$, then by the definition of corners of COMs and the fact that in a LOP all proper antipodal subgraphs are cubes, $v$ is contained in a unique maximal cube of $G$.

Conversely if $v$ is a vertex of a LOP contained in a unique cube, then this cube is also the unique maximal antipodal subgraph of the LOP, since in a LOP all proper antipodal subgraphs are cubes. Thus $v$ is a corner of the LOP.
\end{proof}

Note that, as mentioned earlier, every OM admits an expansion in general position, see~\cite[Lemma 1.7]{San-02} or~\cite[Proposition 7.2.2]{bjvestwhzi-93}. This yields directly from the definition:
\begin{obs}
 Every OM has a corner.
\end{obs}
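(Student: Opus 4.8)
The statement to prove is the Observation: \emph{every OM has a corner}. This follows almost immediately from the definitions assembled so far, so the plan is short.

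\bigskip

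\textbf{Plan of proof.} The plan is to unwind the definition of corner given just above and invoke the cited existence result for expansions in general position. Let $G$ be an OM. First I would recall that the unique maximal antipodal subgraph of $G$ containing any given vertex is $G$ itself, since $G$ is antipodal and convex in itself; hence to exhibit a corner of the COM $G$ it suffices to exhibit a corner of the OM $G$ in the sense of the chunk/corner definition, i.e.\ an expansion in general position $G_1,G_2$ of $G$ and to take $C=G\setminus G_1$. Second, I would cite the fact stated in Section~\ref{sec:prel} (and re-stated in the line immediately preceding the Observation) that every OM admits an expansion in general position, with references~\cite[Lemma 1.7]{San-02} or~\cite[Proposition 7.2.2]{bjvestwhzi-93}. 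Picking such an expansion $G_1,G_2$, the set $C=G\setminus G_1$ is by definition a corner of the OM $G$, and since $G$ is its own unique maximal antipodal subgraph, $C$ is a corner of the COM $G$. This completes the argument.

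\bigskip

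\textbf{Main obstacle.} There is essentially no obstacle: the content has been front-loaded into the definition of \emph{expansion in general position} and into the quoted existence result. The only thing to be slightly careful about is the degenerate case $r(G)=1$, where $G\cong K_2$: here an ``expansion in general position'' is just the peripheral expansion removing one vertex, and the corner is a single vertex, consistent with the definition. One might also note for completeness that the resulting $G\setminus C = G_1$ is a proper isometric sub-COM (indeed an AOM) of $G$ by Lemma~\ref{lem:cornerisweakcorner+peeling}, though this is not needed for the bare existence statement. So the proof is a one-line appeal to~\cite[Lemma 1.7]{San-02}/\cite[Proposition 7.2.2]{bjvestwhzi-93} after observing that an OM is its own maximal antipodal subgraph.
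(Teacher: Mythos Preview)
Your proposal is correct and matches the paper's approach exactly: the Observation is stated right after recalling that every OM admits an expansion in general position, and the paper simply says ``This yields directly from the definition.'' Your additional care about $G$ being its own maximal antipodal subgraph and the rank~$1$ case is fine but more than the paper bothers with.
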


Note however that COMs do not always have corners, e.g., with Proposition~\ref{prop:cornersofLOPs} one sees that the AOMs obtained from the UOMs with a mutation-free element have no corner.

%
%
%

Lemma~\ref{lem:cornerisweakcorner+peeling} yields the following natural definition.
A \emph{corner peeling} in a COM $G$ is an ordered partition $C_1, \ldots, C_k$ of its vertices, such that $C_i$ is a corner in $G - \{C_1,\ldots, C_{i-1}\}$.
In the following we generalize a results from \cite{Tra-04} for realizable LOPs. 

%
\begin{prop}\label{prop:realizable}
Every realizable COM has a corner peeling.
\end{prop}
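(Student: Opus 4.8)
The statement is that every realizable COM admits a corner peeling. The natural approach is to use the geometric realization directly. Recall from the excerpt that a realizable COM $G$ comes with data $\{H_1,\dots,H_n\}$ a set of hyperplanes in $\mathbb{R}^d$ and a full-dimensional open convex set $C$, where the vertices of $G$ are the $d$-dimensional chambers of the arrangement restricted to $C$. The plan is to find a single corner, remove it, and argue that what remains is still a realizable COM (so that we may induct on the number of vertices). Since a corner is a local object — it lives in a unique maximal antipodal subgraph $H$ and is a corner of $H$ — the real content is: (i) produce a vertex (chamber) that is a corner, and (ii) check that deleting this chamber corresponds again to a realizable COM.

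**Finding the corner geometrically.** The key idea is to perturb the picture to isolate a chamber in ``general position''. Concretely, take a generic linear functional $\varphi$ on $\mathbb{R}^d$ (or a generic direction) and pick a chamber $v$ of $C$ that is extremal with respect to $\varphi$ among all chambers; one should choose $v$ so that it sits at a vertex of the arrangement (i.e.\ a point where $d$ hyperplanes — counted via the realization, possibly after a generic tilt — meet) inside the closure of the chamber. Then the $d$ hyperplanes bounding $v$ at that vertex, together with the remaining hyperplanes all lying on one side, exhibit $v$ as a chamber of a small simplex-like cell: the maximal antipodal subgraph $H$ containing $v$ is realized by exactly those $\Theta$-classes crossing $H$, and the expansion of $\pi_e$-type decomposition of $H$ along the hyperplanes through the chosen vertex shows $v$ is a corner of $H$ (it is the ``missing-vertex'' corner in the sense of the definition, where the chunk $H\setminus\{v\}$ is a halfspace of an OM realized by the same hyperplanes restricted to a slightly shrunk convex body). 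One must verify that $v$ lies in a \emph{unique} maximal antipodal subgraph of $G$ — this follows because $v$ is a chamber adjacent only to a bounded ``simplicial'' region, so no larger bounded cell of the arrangement can contain it; genericity of $\varphi$ guarantees this.

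**Deleting the corner stays realizable.** Having found a corner $v$, Lemma~\ref{lem:cornerisweakcorner+peeling} already tells us $G\setminus\{v\}$ (or $G\setminus C$ in general, but here $C=\{v\}$ if $H$ has rank equal to $\deg v$, otherwise $C$ is a corner of $H$) is a COM; what we must add is that it is \emph{realizable}. The point is that removing the extremal chamber $v$ corresponds to shrinking the convex body $C$ slightly: replace $C$ by $C' = C \cap \{\varphi < t\}$ for a threshold $t$ chosen so that $C'$ is full-dimensional, still open and convex, and the only chamber of $C$ disappearing is $v$ (equivalently, slice off the unique extremal chamber with a generic halfspace). Then the chambers of $\{H_1,\dots,H_n\}$ inside $C'$ are exactly the vertices of $G\setminus\{v\}$, with the same adjacencies, so $G\setminus\{v\}$ is realizable by the same hyperplane arrangement and the smaller body $C'$. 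Iterating — at each stage re-choosing a generic $\varphi$ for the current convex body, or keeping $\varphi$ fixed and peeling the chambers in order of $\varphi$-value — produces the corner peeling $C_1,\dots,C_k$ of all of $G$.

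**Main obstacle.** The delicate step is the genericity/position argument: one has to guarantee that the $\varphi$-extremal chamber really is ``simplicial'' in the right sense — that it lies in a unique maximal antipodal subgraph and that the corresponding expansion of that subgraph is genuinely in general position (each maximal proper antipodal sub-subgraph ends up on one side of the split). A naive choice of extremal chamber need not be incident to exactly $d$ hyperplanes, so one likely needs to first perturb the hyperplanes to a generic position that does not change the COM (the realization space argument), or to choose $\varphi$ in generic position relative to all the flats of the arrangement, and then argue combinatorially that the extremal chamber is cut out as a chunk-complement. Managing this perturbation cleanly — and making sure the slicing halfspace $\{\varphi<t\}$ removes exactly one chamber and doesn't create new faces — is where the care is needed; everything else (that the result is a COM, that single-vertex removal gives the right notion of corner peeling) is handled by the lemmas already in place.
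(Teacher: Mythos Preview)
Your geometric instinct --- sweep a generic hyperplane through the realization and peel from the outside in --- is exactly right, and it is what the paper does. But you are aiming the sweep at the wrong target: you try to remove a single \emph{chamber} at a time, whereas a corner of a COM is in general a whole cluster of chambers, not one. Concretely, take four lines through the origin in $\mathbb{R}^2$ and let $C$ be a disk about the origin; the COM is $C_8$, its unique maximal antipodal subgraph is $C_8$ itself, and a corner of $C_8$ (from the definition via an expansion in general position) is a path on three vertices, never a single vertex. Removing one vertex from $C_8$ is not removing a corner; worse, $C_8$ minus a vertex is not even an isometric subgraph of $C_8$, so your sliced body $C'=C\cap\{\varphi<t\}$ cannot realize it with the same four lines. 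The parenthetical ``possibly after a generic tilt'' does not save this: perturbing the hyperplanes to simple position changes the COM.

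The fix is small but essential. Sweep your hyperplane (or, as the paper does, push one of the existing bounding halfspaces of the polyhedron inward) until it passes the first \emph{zero-dimensional flat} of the arrangement lying in $C$, not the first chamber. The chambers that disappear at that moment are precisely those incident to that flat and lying on the outgoing side of the sweeping hyperplane; this set is a corner of the antipodal subgraph sitting at the flat (the sweeping hyperplane furnishes the required expansion in general position of that local OM), it lies in that single maximal antipodal subgraph, and the truncated convex body realizes the chunk. Iterating gives the corner peeling. So the induction variable is the number of zero-flats inside the body, not the number of chambers.
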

\begin{proof}
 We show that a realizable COM $G$ has a realizable chunk $T$. Represent $G$ as a central hyperplane arrangement $\mathcal{H}$ in an Euclidean space intersected with an open polyhedron $P$ given by open halfspaces $\mathcal{O}$. Without loss of generality we can assume that the supporting hyperplanes of the halfspaces in $\mathcal{O}$ are in general position with respect to the hyperplanes in $\mathcal{H}$. 
We shall call points in the Euclidean space, that can be obtained as intersection of subset of hyperplanes $\mathcal{H}$ minimal dimensional cells. It follows from the correspondence between antipodal subgraphs and covectors of a COM \cite[Theorem 4.9]{Kna-17}, that topes (chambers) surrounding minimal dimensional cells correspond to antipodal subgraphs of $G$.
 
 Now, take some halfspace $O\in\mathcal{O}$ and push it into $P$ until it contains the first minimal dimensional cell $C$ of $\mathcal{H}$. The obtained realizable COM $T$ is a chunk of $G$, because restricting the antipodal subgraph (an OM) corresponding to the cell $C$ with respect to $O$ is taking a chunk of $C$, while no other cells of $G$ are affected and the resulting graph $T$ is a COM.
\end{proof}

In~\cite[Conjecture 2]{Ban-18} it was conjectured that all \emph{locally realizable} COMs, i.e., those whose antipodal subgraphs are realizable OMs, are realizable. Proposition~\ref{prop:realizable} yields a disproof of this conjecture, since all antipodal subgraphs of a LOP are hypercubes, i.e., LOPs are locally realizable, but by the example in Figure~\ref{fig:tracyhall} and others there are LOPs that do not have corner peelings. Thus, they cannot be realizable. 

\begin{rem}~\label{rem:notrealizable}
 There are locally realizable COMs, that are not realizable.
\end{rem}

Indeed , LOPs are even \emph{zonotopally realizable}, i.e., one can choose realizations for all cells such that common intersections are isometric.
It remains open whether every locally realizable COM is zonotopally realizable, see~\cite[Question 1]{Ban-18}. 

\subsection{Corners and corner peelings in further classes}\label{sec:corners}

In this section we consider the question of the existence of corners and corner peelings in various classes of graphs. By Proposition~\ref{prop:cornersofLOPs} simplicial vertices in LOPs are corners. Thus, Theorem~\ref{thm:simplical_on_theta} yields:

\begin{cor}\label{cor:AOMcorner}
Every halfspace of a Mandel UOM  has a corner.
\end{cor}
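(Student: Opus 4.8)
The statement to prove is Corollary~\ref{cor:AOMcorner}: every halfspace of a Mandel UOM has a corner. The plan is to unwind the definitions and reduce this directly to Theorem~\ref{thm:simplical_on_theta}. Recall that a halfspace $E_e^+$ of an OM $G'$ is exactly an AOM, and that a UOM is a COM whose proper antipodal subgraphs are hypercubes; thus a halfspace $H=E_e^+$ of a UOM $G'$ is itself a COM all of whose antipodal subgraphs are hypercubes, i.e.\ a LOP (this is consistent with Theorem~\ref{thm:char}(iv)). By Proposition~\ref{prop:cornersofLOPs}, a corner of a LOP is precisely a vertex lying in a unique maximal cube of the LOP. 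So the goal becomes: $H=E_e^+$ contains a vertex $v$ that lies in a unique maximal cube of $H$.

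\textbf{Key steps.} First I would translate ``unique maximal cube of $H$ containing $v$'' back into a statement about $G'$. A vertex $v\in E_e^+$ of the UOM $G'$ lies in a unique maximal cube of $H=E_e^+$ if and only if $v$ has degree $r$ in $G'$ and is incident with $E_e$ — this is the content of the remark in the introduction (``If $G$ is a LOP and also an AOM, i.e., a halfspace $E_e^+$ of an OM $G'$, then $G$ has a corner if and only if $G'$ has a simplicial vertex incident to $E_e$''), and it can be checked from Lemma~\ref{lem:simplicialvertex}: a simplicial vertex $v$ of the UOM $G'$ of rank $r$ is incident with exactly $r$ maximal proper antipodal subgraphs and exactly $r$ $\Theta$-classes, hence sits in a unique convex $Q_r^-$ (a cube minus a vertex); intersecting this with the halfspace $E_e^+$ when $v\in E_e^+$ gives a cube in $H$, and this cube is the unique maximal cube of $H$ through $v$ because any larger antipodal subgraph of $H$ through $v$ would lift to an antipodal subgraph of $G'$ through $v$ of rank $>r-1$, contradicting simpliciality. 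Second, I would invoke Theorem~\ref{thm:simplical_on_theta}: since $G'$ is a simple Mandel UOM of rank $r$ and $E_e$ is one of its $\Theta$-classes, there is a vertex $v$ of degree $r$ incident with $E_e$; after reorienting $E_e$ if necessary we may take $v\in E_e^+=H$. Finally, combining the two steps, $v$ is a vertex of $H$ lying in a unique maximal cube of $H$, hence by Proposition~\ref{prop:cornersofLOPs} a corner of $H$.

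\textbf{Main obstacle.} There is essentially no hard analytic content here; the only subtlety — and the step I would be most careful with — is the precise dictionary between ``simplicial vertex of $G'$ incident with $E_e$'' and ``corner of the halfspace $H=E_e^+$'', i.e.\ verifying that the cube $Q_r^-\cap E_e^+$ really is a \emph{maximal} antipodal subgraph of $H$ and is the \emph{only} maximal cube of $H$ through $v$. This requires using that maximal proper antipodal subgraphs of $G'$ restrict to (or contain) antipodal subgraphs of $H$ and that $v$ being simplicial pins down all antipodal subgraphs through it; Lemma~\ref{lem:simplicialvertex} and Proposition~\ref{prop:cornersofLOPs} together handle this cleanly. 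Once that dictionary is in place, Theorem~\ref{thm:simplical_on_theta} does all the real work and the corollary follows in a couple of lines.
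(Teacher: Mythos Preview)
Your proposal is correct and follows essentially the same route as the paper: the paper's entire proof is the one-line observation that by Proposition~\ref{prop:cornersofLOPs} simplicial vertices in LOPs are corners, so Theorem~\ref{thm:simplical_on_theta} (applied to the $\Theta$-class $E_e$ of the Mandel UOM) immediately gives a corner in the halfspace. Your careful verification of the dictionary between ``simplicial vertex of $G'$ incident with $E_e$'' and ``corner of $E_e^+$'' is exactly the content the paper already packaged in the introduction and in Proposition~\ref{prop:cornersofLOPs}, so nothing further is needed.
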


In the following we focus on COMs of rank~$2$ and hypercellular graphs. In both these proofs we use the zone graph $\zeta_f(G)$ of a partial cube $G$ with respect to a $\Theta$-class $f$, see~\cite{Kla-12}. In general the zone graph is not a partial cube, but indeed a characterization of COMs from~\cite{Kna-17} (generalizing a result of Handa for OMs~\cite{ha-90}) allows the following definition in COMs. Let $G$ be a COM, and $F$ a subset of its $\Theta$-classes. The \emph{zone graph} $\zeta_F(G)$ is the graph obtained from $G$, whose vertices are the minimal antipodal subgraphs of $G$ that are crossed by all the classes in $F$. It turns out that all such antipodal subgraphs have the same rank, say $r$. Two such antipodal subgraphs are connected in $\zeta_F(G)$ if they lie in a common antipodal subgraph of $G$ of rank~$r+1$. The above mentioned characterizing property of COMs is that $\zeta_F(G)$ is always a COM. In the standard language of OMs, zone graphs are known as contractions of OMs. The zone graph operation will be used frequently in Section~\ref{sec:corners}.Section~\ref{sec:prel}. We start with some necessary observations on cocircuit graphs of COMs.

%
%

\subsubsection*{Cocircuit graphs of COMs}
In the following we generalize the concept of orientation of the cocircuit graph introduced in Section~\ref{sec:mandel} from AOMs to general COMs. 

\begin{lem}\label{lem:rankofcell}
 If $G$ is a COM and a hypercube $Q_r$ a minor of $G$, then there is an antipodal subgraph $H$ of $G$ that has a $Q_r$  minor. In particular, the rank of a maximal antipodal subgraph of a COM $G$ is the rank of $G$.
\end{lem}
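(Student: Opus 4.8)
The plan is to exploit the fact that a hypercube minor $Q_r$ of a partial cube $G$ is obtained by a sequence of contractions, and that in a COM contractions interact well with antipodal subgraphs. First I would recall that since $G$ is a partial cube, any minor isomorphic to $Q_r$ is in fact an isometric $Q_r$ obtained purely by contracting a set $F$ of $\Theta$-classes; indeed, a minor of a partial cube arises by contractions, and restrictions afterwards only shrink it, so if the end result is $Q_r$ we may take $\pi_F(G)\supseteq Q_r$ as an isometric subcube, and by contracting further $\Theta$-classes we may assume $\pi_F(G)=Q_r$ with the remaining $r$ coordinates forming the directions of the cube. Now the point is to lift this back to an antipodal subgraph. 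Consider the zone graph construction or, more directly, the minimal antipodal subgraphs of $G$ crossed by all classes in $F$: by the characterization of COMs in Section~\ref{sec:prel}, $\zeta_F(G)$ is again a COM and all the minimal antipodal subgraphs crossed by $F$ have a common rank, say $s$. Each such antipodal subgraph $H$ of $G$, when contracted by all $\Theta$-classes except those crossing $H$, yields an antipodal partial cube; I claim one can choose $H$ so that $\pi_F$ restricted to $H$ already sees all $r$ free directions, i.e.\ $H$ is crossed by $F$ and the image $\pi_F(H)$ contains a translate of $Q_r$.

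The key step is the following lifting argument. Take any vertex $v$ of the isometric $Q_r=\pi_F(G)$, say $v$ is a corner of that cube, and let $\tilde v$ be a preimage vertex in $G$. The $r$ edges of $Q_r$ at $v$ lie in $r$ distinct $\Theta$-classes $e_1,\dots,e_r\notin F$; each is crossed by $G$, so at $\tilde v$ (after possibly moving within the fiber $\pi_F^{-1}(v)$, which is convex) we may assume all $r$ of these classes are incident to $\tilde v$. Now repeatedly apply the strong elimination axiom (SE), in its graph form, to antipodal subgraphs containing $\tilde v$: starting from a maximal proper antipodal subgraph through $\tilde v$ and its antipode, (SE) produces antipodal subgraphs crossed by more and more of the classes $e_1,\dots,e_r$ — this is exactly the mechanism used in the proof of Lemma~\ref{lem:cornerisweakcorner+peeling} and in Lemma~\ref{lem:simplicialvertex}, where a set of maximal proper antipodal subgraphs through a vertex composes to that vertex and one extracts an antipodal subgraph crossed by a prescribed set of classes. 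Iterating, one obtains an antipodal subgraph $H\ni\tilde v$ crossed by all of $e_1,\dots,e_r$. Contracting $H$ by every $\Theta$-class not crossing it gives an antipodal partial cube whose image in $Q_r=\pi_F(G)$ still contains all $r$ free directions, hence has $Q_r$ as a minor; since contracting cannot increase rank and an antipodal partial cube with $Q_r$ a minor has rank at least $r$, we get $r(H)\ge r$. As $H\subseteq G$ and contractions do not increase rank, $r(G)\ge r(H)\ge r$, while trivially $r(G)\le r$ if $Q_r$ is obtained by contraction from $G$ — but for the lemma we only need the lower bound, that some antipodal $H$ has $Q_r$ a minor.

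For the ``in particular'' clause: by the definition of rank, $G$ has $Q_{r(G)}$ as a minor, so by the first part there is an antipodal subgraph $H$ with $Q_{r(G)}$ as a minor, hence $r(H)\ge r(G)$; taking $H$ inside a maximal antipodal subgraph $H'$ (or $H$ itself maximal) and using that $r(H')\le r(G)$ since $H'\subseteq G$, we conclude $r(H')=r(G)$. Conversely any maximal antipodal subgraph has rank at most $r(G)$, so the maximum over maximal antipodal subgraphs of their ranks equals $r(G)$.

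The main obstacle I expect is making precise the ``move within the fiber and apply (SE) repeatedly'' step, i.e.\ guaranteeing that the antipodal subgraph produced by iterated strong elimination really is crossed by \emph{all} the desired directions $e_1,\dots,e_r$ simultaneously rather than just some of them; this requires carefully choosing the starting maximal proper antipodal subgraphs incident to $\tilde v$ (using that they compose to $\tilde v$, exactly as in Observation~\ref{obs:coloop} and Lemma~\ref{lem:simplicialvertex}) so that together they cover $\{e_1,\dots,e_r\}$, and then checking that the $\widetilde Z$-type argument of Lemma~\ref{lem:cornerisweakcorner+peeling} stays inside $G$ and accumulates all of these classes. The rest is bookkeeping about contractions not increasing rank and minors of partial cubes being obtainable by contractions alone.
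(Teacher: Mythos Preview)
Your approach has real gaps, and in fact the paper's proof is a one--liner that bypasses everything you are trying to do.

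First, the gaps. You want to find a preimage $\tilde v$ in the fibre $\pi_F^{-1}(v)$ that is incident to all of $e_1,\dots,e_r$; there is no reason such a vertex exists. The fibre is just a convex subgraph of $G$, and a vertex in it may be incident to none of the $e_i$ --- the $r$ neighbours of $v$ in $Q_r$ lift only to neighbouring fibres, reachable by paths, not to single edges at $\tilde v$. Second, the iterated (SE) argument you invoke from Lemma~\ref{lem:cornerisweakcorner+peeling} produces an antipodal $\widetilde Z$ with $\widetilde Z^0\supseteq X^0$ and $S(\widetilde Z,X)=\emptyset$, i.e.\ an antipodal subgraph \emph{containing} a given one; it does not let you force $\widetilde Z$ to be crossed by a prescribed set $\{e_1,\dots,e_r\}$ of classes that were not already in $X^0$. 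Third, you speak of ``a maximal proper antipodal subgraph through $\tilde v$ and its antipode'', but $G$ is a general COM, not an OM, so $\tilde v$ need not have an antipode at all; the composition/atomistic arguments you cite (Observation~\ref{obs:coloop}, Lemma~\ref{lem:simplicialvertex}) live inside an OM and do not transfer.

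The paper's argument avoids all of this. Since $Q_r$ is obtained from $G$ by contractions, write $\pi_F(G)=Q_r$. The whole of $Q_r$ is an antipodal subgraph of itself. Now apply Lemma~\ref{lem:exp_antipodal} once for each class in $F$ (each contraction is undone by an expansion, and the lemma says every antipodal subgraph downstairs lifts to an antipodal subgraph upstairs). This yields an antipodal subgraph $H$ of $G$ with $\pi_F(H)=Q_r$, so $H$ has $Q_r$ as a minor. That is the entire proof; no (SE), no zone graphs, no fibre chasing.
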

\begin{proof}
Since $Q_r$ is antipodal, by Lemma \ref{lem:exp_antipodal}, there exist an antipodal subgraph $H$ of $G$ that contracts to it. Then $H$ is the desired subgraph.
\end{proof}

We define the \emph{cocircuit graph} of a non-antipodal rank~$r$ COM as the graph whose vertices are the rank~$r$ antipodal subgraphs and two vertices are adjacent if they intersect in a rank~$r-1$ antipodal subgraph. By Lemma~\ref{lem:rankofcell} the vertices of the cocircuit graph of a non-antipodal COM $G$ correspond  to the maximal antipodal subgraphs of $G$. The cocircuit graph of a COM can be fully disconnected hence we limit ourselves to COMs having all its maximal antipodal subgraphs of the same rank with $G^*$ connected. We call them \emph{pure} COMs. Note that AOMs are pure COMs.


Let $G$ be a pure COM, $\{A_1, A_2\}$ be an edge in $G^*$ and $F$ be the set of $\Theta$-classes crossing $A_1\cap A_2$. We have seen in Section \ref{sec:mandel} that if $G$ is an AOM, then the maximal proper antipodal subgraphs of $G$ crossed by $\Theta$-classes in $F$ induce a path of $G^*$ which we called a line.
The following lemma is a generalization of the latter and of general interest with respect to cocircuit graphs of COMs, even if we will use it only in the case of rank~$2$.
\begin{lem}\label{lem:linetree}
Let $G$ be a COM that is not an OM, $\{A_1, A_2\}$ be an edge in $G^*$, and $F$ be the set of $\Theta$-classes crossing $A_1\cap A_2$. Then the maximal proper antipodal subgraphs of $G$ crossed by $\Theta$-classes in $F$ induce a subgraph of $G^*$ isomorphic to the line graph of a tree.
\end{lem}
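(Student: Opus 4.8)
The plan is to analyze the structure of the subgraph $L\subseteq G^*$ induced by the maximal proper antipodal subgraphs crossed by all $\Theta$-classes in $F$, and to show it is the line graph of a tree by exhibiting the tree directly via the zone graph $\zeta_F(G)$. First I would recall from the preliminaries that $\zeta_F(G)$ is a COM whose vertices are the minimal antipodal subgraphs of $G$ crossed by every class in $F$; since $\{A_1,A_2\}$ is an edge of $G^*$ and $F=S$-type data of $A_1\cap A_2$, the subgraph $A_1\cap A_2$ is such a minimal antipodal subgraph, so $\zeta_F(G)$ is nonempty. I claim $\zeta_F(G)$ has rank at most $1$: if it had rank $\geq 2$, it would contain $Q_2$ as a minor, and by Lemma~\ref{lem:rankofcell} an antipodal subgraph realizing $Q_2$; lifting back through the zone-graph construction this would force an antipodal subgraph of $G$ of rank strictly exceeding that of the maximal ones crossed by $F$, and one checks (using that the $A_i$ are already \emph{maximal} proper antipodal subgraphs and $G$ is not an OM, so its maximal antipodal subgraphs are proper) that this contradicts maximality — more precisely, it would produce an antipodal subgraph properly containing some $A_i$. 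Hence $\zeta_F(G)$ is a COM of rank $\leq 1$, i.e., a partial cube of rank $\leq 1$ with gated (trivially) antipodal subgraphs, which is simply a tree $T$.

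Next I would set up the bijection between $V(T)$ and the edges of $L$, and between $E(T)$ and the vertices of $L$, matching incidence. Concretely: the vertices of $T=\zeta_F(G)$ are the rank-$|F|$ (equivalently, minimal-crossed-by-$F$) antipodal subgraphs of $G$; call their rank $s$. The maximal proper antipodal subgraphs of $G$ crossed by all of $F$ — the vertices of $L$ — have some fixed rank $r-1$, and each such $A$ contains a unique chain up to the minimal crossed-by-$F$ subgraphs sitting inside it; I would argue each vertex $A$ of $L$ contains exactly two of the $\zeta_F(G)$-vertices (its "ends" along $F$), corresponding to an edge of $T$, and conversely each edge of $T$ (a rank-$(s+1)$ antipodal subgraph crossed by $F$, nested appropriately) sits inside exactly one vertex $A$ of $L$. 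Checking that two vertices $A,A'$ of $L$ are adjacent in $G^*$ — i.e. meet in a rank-$(r-1)$... wait, rank-$(r-2)$... antipodal subgraph — precisely when the corresponding edges of $T$ share an endpoint is the heart of the matter and should follow from applying (SE) repeatedly (as in the proof of Lemma~\ref{lem:cornerisweakcorner+peeling}) together with the COM face-poset structure: the common subgraph of $A$ and $A'$ is again crossed by $F$ and its "$F$-trace" is the shared $\zeta_F(G)$-vertex.

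The main obstacle I anticipate is the last adjacency-matching step: verifying in both directions that incidence in $T$ corresponds exactly to $G^*$-adjacency in $L$, without the luxury of the topological representation that was available in the AOM case of Section~\ref{sec:mandel}. The potential pitfall is a vertex of $L$ corresponding to a degenerate edge of $T$ or two distinct vertices of $L$ collapsing to the same edge of $T$; ruling this out requires using purity of $G$ (all maximal antipodal subgraphs of the same rank, $G^*$ connected) and the gatedness of antipodal subgraphs from Theorem~\ref{thm:char} to control how antipodal subgraphs of consecutive ranks nest. Once the bijection $E(T)\leftrightarrow V(L)$, $V(T)\leftrightarrow$ (edge-structure of $L$) is pinned down with incidence preserved, the conclusion $L\cong \mathcal{L}(T)$ is immediate from the definition of line graph. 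A minor point worth isolating as a sub-claim: since $G$ is not an OM, $\zeta_F(G)$ is genuinely not antipodal (else $G$ would be), which is what guarantees $T$ is a tree rather than an even cycle — this is the exact analogue of, and contrast with, the AOM statement in Section~\ref{sec:mandel} where lines are paths inside cycles.
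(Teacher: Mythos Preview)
Your approach is essentially the paper's: exhibit the tree as the zone graph $\zeta_F(G)$, argue it is a rank~$1$ COM and hence a tree, and identify the induced subgraph of $G^*$ with its line graph via the correspondence (edges of $\zeta_F(G)$) $\leftrightarrow$ (rank-$r$ antipodal subgraphs crossed by $F$). The paper is considerably terser---it asserts rank~$1$ directly and disposes of your ``main obstacle'' (the adjacency-matching) with the phrase ``by definition,'' since the zone-graph construction already encodes that two rank-$(r-1)$ subgraphs are adjacent in $\zeta_F(G)$ exactly when they sit in a common rank-$r$ antipodal subgraph; your worries there are legitimate but resolve straightforwardly once you note that inside any rank-$r$ OM the cocircuits crossed by all of $F$ come in a single antipodal pair, and that two distinct rank-$(r-1)$ subgraphs cannot lie in two distinct rank-$r$ subgraphs. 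Your closing sub-claim (that $\zeta_F(G)$ is not antipodal because $G$ is not an OM) is unnecessary: rank~$1$ alone forces $\zeta_F(G)$ to be a tree.
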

\begin{proof}
Let $G, A_1, A_2, F$ be as stated and $r$ the rank of $G$. Consider the zone-graph $\zeta_F(G)$. Recall that its vertices are antipodal subgraphs of rank~$r-1$ crossed by $\Theta$-classes in $F$ where two subgraphs are adjacent if they lie in a common rank~$r$ antipodal subgraph. Since $\zeta_F(G)$ is a COM, see e.g.~\cite{Kna-17} and has rank~$1$, we have that $\zeta_F(G)$ is a tree. By definition, the maximal proper antipodal subgraphs of $G$ crossed by $\Theta$-classes in $F$ correspond to edges of $\zeta_F(G)$, with two such edges connected if they share a vertex. Hence they form a subgraph of $G^*$ that is isomorphic to the line graph of $\zeta_F(G)$.
\end{proof}


Lemma~\ref{lem:linetree} implies that $G^*$ can be seen as the edge disjoint union of line graphs of trees. We can use this to orient edges of $G^*$. Similarly as in the settings of AOMs, we will call a \emph{line} in $G$ a maximal path $L=A_1,\ldots, A_n$ in the cocircuit graph $G^*$ such that $A_{i-1} \cap A_i$ is the set of 
antipodes of $A_i \cap A_{i+1}$ with respect to $A_i$. Let now $E_e\in\mathcal{E}$ be a $\Theta$-class of 
$G$. Similarly as before we say that $E_e$ crosses a line $L$ of $G^*$ if there exists $A_i$ on $\ell$ that is crossed by $E_e$ but $A_{i-1} \cap A_{i}$ or $A_i \cap A_{i+1}$ is not crossed by it. If $A_i$ exists, it is unique.
\emph{The orientation of $L$ with respect to $E_e$} is the orientation of the path $L$ in $G^*$ from $E_e^-$ to $E_e^+$ if $E_e$ crosses $L$ and not orienting the edges of $L$ otherwise. Notice that in this way we can orient the edges of $G^*$ with respect to $E_e$ by orienting all the lines simultaneously. 
The orientation of each edge (if it is oriented) is well defined: If $\{A_j, A_{j+1}\} \in E_e^-$ is an edge in a line graph of a tree that is crossed by $E_e$ in $A_i$ and $A_{j+1}$ is closer to $A_i$ than $A_{j}$ is, then $\{A_j, A_{j+1}\}$ is oriented from  $A_j$ to $A_{j+1}$. Similarly if $\{A_j, A_{j+1}\} \in E_e^+$  and $A_{j}$ is closer to $A_i$ than $A_{j+1}$ is, then $\{A_j, A_{j+1}\}$ is oriented from  $A_j$ to $A_{j+1}$. Furthermore, $\{A_j, A_k\}$ is not oriented if $A_j, A_k$ are at the same distance to $A_i$. See Figure~\ref{fig:examplelines} for an illustration.

\begin{figure}[ht]
\centering
\includegraphics[width=.8\textwidth]{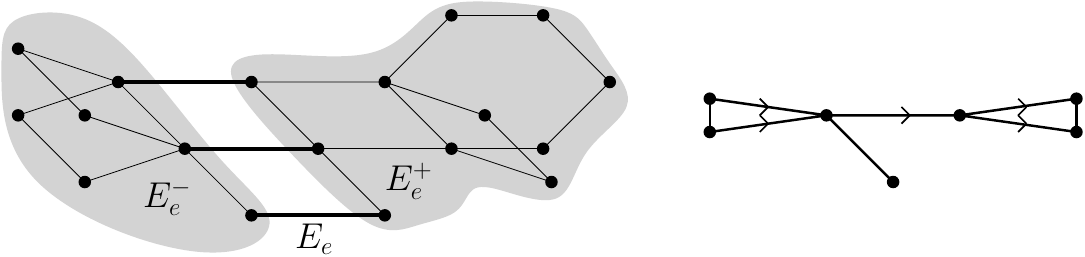}
\caption{A pure rank 2 COM and its cocircuits graph  oriented with respect to $E_e$. }
\label{fig:examplelines}
\end{figure}

\subsubsection*{COMs of rank~$2$}
Mandel proved that every AOM of rank~$2$ is Euclidean, which by Corollary~\ref{cor:AOMcorner} implies that every rank 2 halfspace of a UOM has a corner. We generalize this result.

Let us first consider what corners in rank 2 COMs are. Up to isomorphism the only rank 2 OMs are even cycles. An expansion in general position of an even cycle $G= C_{2n}$ is given by $G_1, G_2=-G_1$, where $G_1$ consists of an induced path on $n+1$ vertices. Hence a corner in a rank 2 COM consist of $n-1$ vertices inducing a path, included in a unique antipodal $C_{2n}$. For example, the COM in Figure \ref{fig:examplelines} has 11 corners. Those contained in a square are single vertices and the ones contained in the $C_6$ are paths with two vertices.
\begin{prop}\label{prop:rank2corner}
Let $G$ be a pure COM of rank 2 and $E_e$ a $\Theta$-class of $G$. Then $G$ has a corner in $E_e^+$ and  in $E_e^-$.
\end{prop}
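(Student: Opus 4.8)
The natural strategy is to pick, among all rank-$2$ antipodal subgraphs of $G$, a cycle $C$ that is ``extremal'' with respect to the orientation of the cocircuit graph $G^*$ induced by $E_e$, and then to peel off the appropriate path-corner inside $C$. First I would recall from Lemma~\ref{lem:linetree} and the discussion afterwards that $G^*$ is the edge-disjoint union of line graphs of trees, and that orienting all lines simultaneously with respect to $E_e$ gives a well-defined partial orientation of $G^*$ pointing from $E_e^-$ to $E_e^+$. The key point is that this orientation is \emph{acyclic} in the strong sense (any directed cycle is all-undirected): indeed, for rank-$2$ COMs the zone graphs $\zeta_F(G)$ are trees, so each line is a path, hence genuinely acyclic, and the separating $\Theta$-class $E_e$ orders the vertices monotonically along each line. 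From this one extracts a vertex $C$ of $G^*$ (a maximal rank-$2$ antipodal subgraph, i.e.~an even cycle) lying in $E_e^+$ all of whose out-neighbours in $G^*$ are crossed by $E_e$; symmetrically one finds such a $C'$ in $E_e^-$. I will work out the $E_e^+$ case; the other is obtained by reorienting $E_e$.

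Having fixed such a cycle $C=C_{2m}$ with $C\subseteq E_e^+$, the plan is to show that $C$ carries a corner of $G$ contained in $E_e^+$. By the description of rank-$2$ corners recalled just before the proposition, a corner of $C$ is a path on $m-1$ vertices inside $C$ forming a chunk; equivalently, choosing an expansion in general position $C = G_1\cup G_2$ with $G_1$ an induced path on $m+1$ vertices and $G_2=-G_1$, the corner is $C\setminus G_1$. So I need to choose this path-expansion of $C$ so that (i) the resulting path-corner lies entirely in $E_e^+$ and (ii) $C$ is the \emph{unique} maximal antipodal subgraph of $G$ containing that path-corner, which is exactly what makes it a corner of the COM $G$ and not merely of $C$. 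For (i): since $C\subseteq E_e^+$, \emph{any} corner of $C$ automatically lies in $E_e^+$, so this is free once we know $C\subseteq E_e^+$. For (ii): this is where the choice of $C$ as a sink of the $E_e$-orientation is used. If some vertex $v$ of the prospective corner were contained in another maximal rank-$2$ antipodal subgraph $D\ne C$, then $C$ and $D$ would be adjacent in $G^*$ (they share an edge through $v$ or at least intersect in rank $1$ along a line), and $D$ would be an out-neighbour of $C$, hence crossed by $E_e$ — I then need to argue this forces that shared rank-$1$ subgraph (the relevant edge at $v$) to be crossed by $E_e$, contradicting $v$ lying on a path-corner whose incident edges in $C$ are not in $E_e$. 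Concretely, I would choose the expansion of $C$ so that the two ``boundary'' edges of $G_1$ (the two edges of $C$ separating the path-corner from the rest of $C$) are precisely the edges of $C$ in the $\Theta$-class $E_e$, if $E_e$ crosses $C$, or any two antipodal edges otherwise; with this choice every edge of $C$ incident to an internal vertex of the corner is disjoint from $E_e$, so no such vertex can lie in a neighbour $D$ of $C$ that is crossed by $E_e$.

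The main obstacle I anticipate is making step (ii) fully rigorous: ruling out that a corner vertex $v\in C$ is shared with \emph{some other} maximal antipodal subgraph of $G$, using only the extremality of $C$ in the $E_e$-orientation and the axioms (FS), (SE) together with gatedness of antipodal subgraphs (Theorem~\ref{thm:char}). One has to be careful that ``$C$ is a sink among its out-neighbours in $G^*$'' really controls \emph{all} maximal antipodal subgraphs through an internal corner vertex $v$, including ones that meet $C$ only in a single vertex rather than an edge — here I would invoke that in a rank-$2$ COM two distinct maximal antipodal subgraphs sharing a vertex must share an edge (since their intersection is an antipodal subgraph, and an antipodal subgraph of rank $0$ is a single vertex only if that vertex is itself... — actually one shows the intersection, being antipodal and contained in two cycles, is either a single edge or forces a common rank-$1$ line), and apply Lemma~\ref{lem:linetree}. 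Once this local uniqueness is established, $C\setminus G_1$ is by definition a corner of $G$ in $E_e^+$, and reorienting $E_e$ gives one in $E_e^-$, completing the proof.
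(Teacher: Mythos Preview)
Your outline follows the paper's strategy in broad strokes (orient $G^*$ with respect to $E_e$, find an extremal cycle, extract a corner from it), but there are two genuine gaps.

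\textbf{Acyclicity is not established.} You assert that the $E_e$-orientation of $G^*$ is strictly acyclic because ``zone graphs are trees, so each line is a path''. That only shows each individual line carries no directed cycle; it says nothing about cycles that traverse several lines. The paper devotes the bulk of its proof to exactly this point: it takes a putative directed cycle in $G^*\cap E_e^+$ built from pieces of lines $L_1,\dots,L_n$ following $\Theta$-classes $E_{e_1},\dots,E_{e_n}$, and derives a strict chain of inclusions $E_{e_n}^-(G_e^*)\subsetneq\cdots\subsetneq E_{e_1}^-(G_e^*)\subsetneq E_{e_n}^-(G_e^*)$ inside the line-graph-of-a-tree $G_e^*$, a contradiction. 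This argument also requires first reducing (by replacing $E_e$ if necessary) to the case where no $\Theta$-class lies entirely in $E_e^+$, so that every line through a cycle in $E_e^+$ is actually oriented. None of this is automatic.

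\textbf{The choice of corner inside $C$ is misidentified.} Since your extremal cycle $C$ lies in $E_e^+$, the class $E_e$ does \emph{not} cross $C$, so your proposal to take the boundary edges of the chunk ``in the $\Theta$-class $E_e$'' is vacuous, and your fallback ``any two antipodal edges'' gives no leverage for uniqueness. The paper instead analyses all lines through $C$, uses the tree structure of $G_e^*$ to single out one line $L$ following some $E_f\neq E_e$ that is extremal among them, and defines the corner as the vertices of $C$ in $E_f^-$ minus the one vertex shared with the neighbour along $L$. The uniqueness argument then hinges on $E_f$, not $E_e$: any other maximal antipodal $A''$ meeting the corner would extend either $L$ or another line through $C$ beyond $C$, contradicting that $C$ has no out-edges. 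To reduce to the case where $A''$ shares an \emph{edge} (not just a vertex) with $C$, the paper invokes that convex cycles span the cycle space of a partial cube; your sketch flags this issue but does not resolve it. Finally, the case $G^*\cap E_e^+=\emptyset$ needs to be handled separately.
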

\begin{proof}
Let $E_e\in\mathcal{E}$ be a $\Theta$-class of a pure rank 2 COM $G$. We shall prove that $G$ has a corner in $E_e^+$, by symmetry it follows that it has one in $E_e^-$ as well. Without loss of generality assume that there is no $\Theta$-class $E_f$ completely contained in $E_e^+$, otherwise switch $E_e^+$ with $E_f^+$ or $E_f^-$ depending on which is entirely in $E_e^+$. Orient the edges of $G^*$ with respect to $E_e$. 

Since the rank of $G$ is 2, the maximal antipodal subgraphs are even cycles and each line consist of sequence of cycles pairwise crossing in edges from $E_f$, for some $E_f$. We will say that the line \emph{follows} $E_f$. The induced subgraph of $G^*$ of all the maximal antipodal subgraphs crossed by $E_f$ is the line graph of a tree, by Lemma~\ref{lem:linetree}. We will denote it by $G_f^*$. Let $G_f^*$ be crossed by some $E_g$ in $A_i$. Then this splits the vertices of $G_f^* - \{A_i\}$ into the ones lying in $E_g^+$ and the ones lying in $E_g^-$. We denote these by $E_g^+(G_f^*)$ and $E_g^-(G_f^*)$, respectively. 

We shall prove that there is no directed cycle in $G^* \cap E_e^+$ consisting of only directed edges. For the sake of contradiction assume that such a cycle exists and take one that is the union of as few parts of lines as possible. Let the cycle be a union of a part of $L_1$, a part of $L_2$,\ldots, and a part of $L_n$. Also denote by $E_{e_1}, \ldots, E_{e_n}$ the respective $\Theta$-classes followed by $L_1, \ldots, L_n$.

Since $L_i$ and $L_{i+1}$ intersect, $L_i$ must be crossed by $E_{e_{i+1}}$ and $L_{i+1}$ must be crossed by $E_{e_{i}}$.
Without loss of generality assume that $L_2$ passes $E_{e_1}$ from $E_{e_1}^-$ to $E_{e_1}^+$, $L_3$ passes $E_{e_2}$ from $E_{e_2}^-$ to $E_{e_2}^+$, \ldots, and $L_1$ passes $E_{e_n}$ from $E_{e_n}^-$ to $E_{e_n}^+$, otherwise reorient  $E_{e_1}, \ldots, E_{e_n}$.

First we show that each $L_i$ passes $E_{e_{i+1}}$ from $E_{e_{i+1}}^+$ to $E_{e_{i+1}}^-$. Assuming otherwise the intersection of $L_{i-1}$ and $L_{i}$ lies in $E_{e_{i+1}}^-$, while the intersection of $L_{i+2}$ and $L_{i+3}$  lies in $E_{e_{i+1}}^+$, by the assumption in the previous paragraph. Then one of the lines $L_{i+4},L_{i+5}, \ldots L_{i-1}$ must pass $E_{e_{i+1}}$ from $E_{e_{i+1}}^+$ to $E_{e_{i+1}}^-$, say $L_j$ passes it. If this passing is in $E_{e_{i}}^+$, then the cycle is not minimal, since one could just replace the lines $L_{i+1},\ldots, L_j$ by the line following $E_{e_{i+1}}$ starting from the intersection of $L_{i}$ and $L_{i+1}$ to the crossing of $E_{e_{i+1}}$ and $L_j$.
In fact such a directed line exists since by assumption the lines following $E_{e_{i+1}}$ pass $E_{i}$ from $E_i^-$ to $E_i^+$ thus the orientation of the shortest path from the intersection of $L_{i}$ and $L_{i+1}$  to the crossing of $L_{j}$ and $E_{e_{i+1}}$ is correct. 

On the other hand, assume the passing is in $E_{e_{i}}^-$. By assumption, the intersection of $L_{i+1}$ and $L_{i+2}$ is in $E_{e_{i}}^+$. Hence one of the lines $L_{i+3},L_{i+4}, \ldots L_{j}$ must pass $E_{e_{i}}$, say $L_{l}$. In particular it must pass it in $E_{e_{i+1}}^+$, by the choice of $L_j$. But then again the cycle is not minimal, since one could just continue on the line following $E_{e_{i}}$ starting from the intersection of $L_{i-1}$ and $L_{i}$ to the crossing of $E_{e_{i}}$ and $L_{l}$. This cannot be.

Now we show that for each $e_i, e_{i+1}$ it holds that $E_{e_{i+1}}^-(G_e^*)  \subset E_{e_{i}}^-(G_e^*)$. Since $L_{i+1}$ passes $E_{e_i}$ from $E_{e_i}^-$ to $E_{e_i}^+$ it follows that $L_{i+1}$ passes $E_e$ in $E_{e_i}^-$. Since $G_e^*$ is the line graph of a tree, this implies that either $E_{e_{i+1}}^+(G_e^*)  \subset E_{e_{i}}^-(G_e^*)$ or $E_{e_{i+1}}^-(G_e^*)  \subset E_{e_{i}}^-(G_e^*)$. On the other hand, $L_{i}$ passes $E_{e_{i+1}}$ from $E_{e_{i+1}}^+$ to 
$E_{e_{i+1}}^-$ hence $L_{i}$ passes $E_e$ in $E_{e_{i+1}}^+$. Again since $G_e^*$ is the line graph of a tree, this implies that $E_{e_{i}}^+(G_e^*)  \subset E_{e_{i+1}}^+(G_e^*)$ or $E_{e_{i}}^-(G_e^*)  \subset E_{e_{i+1}}^+(G_e^*)$. Hence, $E_{e_{i+1}}^-(G_e^*)  \subset E_{e_{i}}^-(G_e^*)$ and $E_{e_{i}}^+(G_e^*)  \subset E_{e_{i + 1}}^+(G_e^*)$.

Inductively 
$E_{e_{n}}^-(G_e^*)  \subset E_{e_{n-1}}^-(G_e^*) \subset \dots \subset E_{e_{1}}^-(G_e^*) \subset E_{e_{n}}^-(G_e^*)$ -- contradiction. This proves that there is no directed cycle in $G^* \cap E_e^+$ consisting of only directed edges.

We can now prove that $G$ has a corner in $E_e^+$. First, assume that  $G^* \cap E_e^+$ is non-empty and let $A \in G^* \cap E_e^+$ be a maximal antipodal subgraph, i.e., an even cycle, that has no out-edges in $G^*$. By the choice of $E_e$, each line $L$ that passes $A$ is crossed by $E_e$. We now analyze how lines pass $A$. Let $L_1, L_2$ be lines passing $A$, following  $E_{f_1},  E_{f_2}$, respectively. Since  $E_{f_2}$ crosses at most one antipodal subgraph of $G_{f_1}^*$, this implies that  $L_1$ and $L_2$ simultaneously pass only $A$. In particular each antipodal subgraph of $G^*_e$ is passed by at most one line passing $A$. Since  $G^*_e$ is the line graph of a tree, its every vertex is a cut vertex. Then each line $L$, passing $A$ and $A_f \in G_e^*$, and following some $E_{f}$, splits $G_e^* - \{A_f\}$ into two connected components, $E_f^+(G_e^*)$ and $E_f^-(G_e^*)$. Thus we can inductively find $L$ such that any other line passing $A$ passes an antipodal subgraph in $G_e^*$ in $E_f^+(G_e^*)$, reorienting $E_f$ if necessary. 

We now show that $A$ includes a corner. Let $A'$ be an antipodal subgraph on $L$ that is a neighbor of $A$. Then $A\cap A'$ corresponds to an edge in $E_f$. Define the set $C$ to include all the vertices of $A$ in $E_f^-$ besides the one vertex lying in $A\cap A'$. Then $C$ is a corner of $A$, we will show that $C$ is a corner of $G$. For the sake of contradiction assume that a vertex $v$ of $C$ lies in a maximal antipodal subgraph $A''$ different from $A$.

We prove that we can choose  $A''$ such that it shares an edge with $A$. Assuming otherwise, since $G$ is a pure COM, there is a path in $G^*$ between $A$ and $A''$. This implies that there is a cycle $C_k$ in $G$ with subpath $v'vv''$, where $v'\in A\setminus A''$ and $v'' \in A''\setminus A$. By \cite[Lemma 13]{Che-16}, the convex cycles span the cycle space in a partial cube. If $A$ is one of the convex cycles spanning $C_k$, then there is a convex cycle incident with $A$ in $v$ and sharing an edge with $A$. If $A$ is not used to span $C_k$, then a convex cycle incident with edge $v'v$ must be used, thus again we have a convex cycle sharing $v$ and an edge with $A$.

We hence assume that $A$ and $A''$ share an edge $g$. By definition of $C$, either $g \in E_f^{-}$, or $g\in E_f$ but not in $A\cap A'$. The latter case implies that $L$ can be extended with $A''$, which cannot be since $A$ in $G^*$ has no out-edges. Moreover, by the choice of $L$, all the other lines passing $A$ pass $E_f$ form $E_f^+$ to $E_f^-$. Then in the former case, some other line passing $A$ can be extended, leading to a contradiction. This implies that $G$ has a corner. 

Finally, consider the option that $G^* \cap E_e^+$ is empty. Since $G_e^*$ is the line graph of a tree, we can pick $A\in G_e^*$ that corresponds to a pendant edge in a tree, i.e. an edge with one endpoint being a leaf. Then it is easily seen that $A$ has a corner in $E_e^+$. This finishes the proof.
\end{proof}

The following is a common generalization of corresponding results for cellular bipartite graphs~\cite{Ban-96} (being exactly rank~$2$ hypercellular graphs, which in turn are COMS~\cite{Che-16}) and LOPs of rank 2~\cite{Cha-18}.

\begin{thm}\label{thm:peeling}
Every rank 2 COM has a corner peeling.
\end{thm}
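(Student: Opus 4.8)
The plan is to peel off one corner at a time and use Proposition~\ref{prop:rank2corner} together with Lemma~\ref{lem:cornerisweakcorner+peeling} to keep the induction going. The key obstruction is that Proposition~\ref{prop:rank2corner} is stated for \emph{pure} rank~$2$ COMs, while a general rank~$2$ COM need not be pure: it may have maximal antipodal subgraphs that are edges ($K_2$, rank~$1$) or even isolated vertices, and its cocircuit graph $G^*$ need not be connected. So the first order of business is to handle the non-pure case and to guarantee that after removing a corner the remaining graph is still a rank~$2$ COM amenable to the same argument.

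First I would reduce to the case where $G$ is connected (a corner peeling of a disconnected COM is obtained by concatenating corner peelings of the components, and each component is again a rank~$2$ COM by Lemma~\ref{lem:isometric}). Next, if $G$ has a maximal antipodal subgraph $H$ that is a single edge or a single vertex not contained in any larger antipodal subgraph, then a corner of $H$ — an endpoint of the edge, respectively the vertex itself — is a corner of $G$ by definition, provided that vertex lies in no other maximal antipodal subgraph; one checks that a pendant-type vertex of this kind always exists unless every maximal antipodal subgraph is a cycle, i.e.\ unless $G$ is pure. (More carefully: if $G$ is a rank~$2$ COM that is not pure, I would argue that some $\Theta$-class $E_e$ is \emph{not} crossed by any cycle, so $\pi_e$ restricted to either halfspace behaves like a leaf; peeling along such an $E_e$ yields a vertex contained in a unique maximal antipodal subgraph, which is then a corner.) The point is to produce \emph{some} corner $C$ in the non-pure case as well, so that the induction can proceed uniformly.

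With a corner $C$ in hand — whether $G$ is pure (use Proposition~\ref{prop:rank2corner} with any $\Theta$-class $E_e$) or not (use the leaf argument above) — Lemma~\ref{lem:cornerisweakcorner+peeling} tells us that $G' := G \setminus C$ is an isometric subgraph of $G$ that is again a COM. I would then check that $r(G') \le 2$: since $G'$ is an induced (indeed convex, being a chunk) subgraph of $G$, contracting it to a hypercube can only use $\Theta$-classes of $G$, so its rank cannot exceed that of $G$. Thus $G'$ is a COM of rank at most~$2$; if its rank dropped to~$1$ or~$0$ it is a path or a point and trivially has a corner peeling. Otherwise $G'$ is again a rank~$2$ COM with strictly fewer vertices, and by induction on $|V(G)|$ it has a corner peeling $C_2, \ldots, C_k$; prepending $C_1 := C$ gives a corner peeling of $G$.

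The main obstacle I anticipate is the bookkeeping in the non-pure case: making precise the claim that a rank~$2$ COM which is not pure always admits an ``outermost'' vertex sitting in a unique maximal antipodal subgraph, and that removing it does not create pathologies (e.g.\ disconnecting $G$ in a way that spoils isometry — which it does not, by the argument in the second paragraph of the proof of Lemma~\ref{lem:cornerisweakcorner+peeling}, since a corner is never adjacent to the rest of $G$ outside its own maximal antipodal subgraph). A clean way to package this is to prove a small lemma: \emph{every rank~$2$ COM has a corner}, splitting into the pure case (Proposition~\ref{prop:rank2corner}) and the non-pure case (leaf argument), and then Theorem~\ref{thm:peeling} follows by the straightforward induction sketched above using Lemma~\ref{lem:cornerisweakcorner+peeling} to preserve COM-ness and the rank bound at each step.
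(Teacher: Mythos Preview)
Your overall induction scheme---find a corner, remove it via Lemma~\ref{lem:cornerisweakcorner+peeling}, observe the rank cannot increase, repeat---is exactly the paper's. The pure case is also handled the same way (Proposition~\ref{prop:rank2corner}). The gap is in your treatment of the non-pure case.

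You write that a rank~$2$ COM is pure ``unless every maximal antipodal subgraph is a cycle'', and in the parenthetical you claim that a non-pure rank~$2$ COM must have a $\Theta$-class not crossed by any cycle. Neither is correct: purity requires \emph{both} that all maximal antipodal subgraphs have rank~$2$ \emph{and} that $G^*$ is connected. Take two $4$-cycles glued at a single vertex; this is a connected rank~$2$ COM whose maximal antipodal subgraphs are both cycles, yet $G^*$ consists of two isolated vertices (the cycles share only a rank~$0$ subgraph), so it is not pure, and every $\Theta$-class is crossed by one of the two cycles. Your leaf argument does not produce a corner here, and Proposition~\ref{prop:rank2corner} does not apply directly because its proof uses the line structure of a connected $G^*$.

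The paper closes this gap with one clean observation: a rank~$2$ COM is pure if and only if it is $2$-connected. Hence one looks at the block tree of $G$; any leaf block is $2$-connected, therefore pure, and by Proposition~\ref{prop:rank2corner} it has corners on both sides of any $\Theta$-class---in particular one that avoids the unique cut vertex attaching that block to the rest of $G$. That corner is then a corner of all of $G$. This replaces your ad hoc ``pendant $\Theta$-class'' search with the standard block decomposition, and handles the bowtie example immediately. (A minor aside: a chunk is isometric but not in general convex, so your parenthetical ``indeed convex, being a chunk'' should be dropped; isometry is all you need for the rank bound.)
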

\begin{proof}
Notice that a rank 2 COM is pure if and only if it is 2 connected. Consider the blocks of 2-connectedness of a rank 2 COM $G$. Then a block corresponding to a leaf in the tree structure of the block graph has 2 corners by Proposition~\ref{prop:rank2corner}. This  implies that $G$ has a corner. Proposition~\ref{lem:cornerisweakcorner+peeling} together with the observation that $G$ minus the corner has rank at most $2$ yield a corner peeling.
\end{proof}

\subsubsection*{Hypercellular graphs}

Hypercellular graphs were introduced as a natural generalization of median graphs, i.e., skeleta of CAT(0) cube complexes in~\cite{Che-16}. They are COMs with many nice properties one of them being that all their antipodal subgraphs are Cartesian products of even cycles and edges, called \emph{cells}. See Figure~\ref{fig:exmpl} for an example.

\begin{figure}[ht]
\centering\includegraphics[width=.5\textwidth]{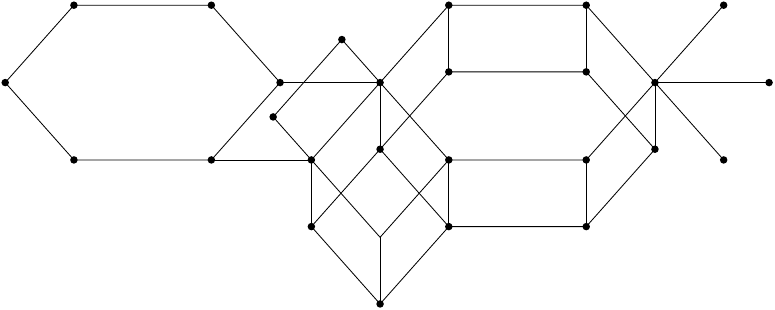}
\caption{A hypercellular graph.}
\label{fig:exmpl}
\end{figure}

More precisely, a partial cube $G$ is \emph{hypercellular} if all its antipodal subgraphs are cells and if three cells of rank~$k$ pairwise intersect in a cell of rank~$k-1$ and altogether share a cell of rank~$k-2$, then all three lie in a common cell, for all $2\leq k\leq r(G)$. See Figure~\ref{fig:3CC} for three rank 2 cells (cycles) pairwise intersecting in rank 1 cells (edges) and sharing a rank 0 cell (vertex) lying in a common rank~3 cell (prism). 
Since median graphs are realizable COMs, see~\cite{mar-18}, which is also conjectured for hypercellular graphs~\cite{Che-16}, they have corner peelings by Proposition~\ref{prop:realizable}. Here, we prove that hypercellular graphs have a corner peeling, which can be seen as a support for their realizability.

\begin{figure}[ht]
\centering
\includegraphics[width=.5\textwidth]{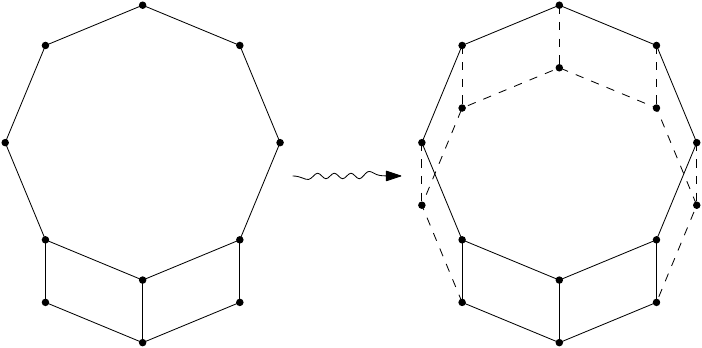}
\caption{Any three cycles of a hypercellular graph that pairwise intersect in an edge and share a vertex lie in a common cell.}
\label{fig:3CC}
\end{figure}

\sloppy
The following lemma determines the structure of corners in hypercellular graphs, since the corners of an edge $K_2$ and an even cycle $C_{2n}$ are simply a vertex and a path $P_{n-1}$, respectively. 
\begin{lem}\label{lem:cornercell}
Let $G = \square_i A_i$ be the Cartesian product of even cycles and edges. Then the corners of $G$ are precisely sets of the form $\square_i D_i \subset G$,  where $D_i$ is a corner of $A_i$ for every $i$. 
\end{lem}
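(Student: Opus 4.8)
The plan is to prove both inclusions directly, using the product structure of $G=\square_i A_i$ together with the defining property of corners in OMs (expansions in general position) and the characterization of corners of the individual factors $A_i$ (a vertex if $A_i=K_2$, an induced path $P_{n-1}$ if $A_i=C_{2n}$). The key structural fact I would invoke is that antipodal subgraphs of a Cartesian product $\square_i A_i$ are exactly the products $\square_i B_i$ with $B_i$ an antipodal subgraph of $A_i$; since each $A_i$ is an even cycle or an edge, each $B_i$ is again such, and in particular the maximal proper antipodal subgraphs of $G$ are obtained by replacing exactly one factor $A_i=C_{2n}$ by one of its two antipodal edges (or, if some $A_i=K_2$, there are no proper antipodal subgraphs in that coordinate to shrink). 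This also makes the rank of $G$ equal to $\sum_i r(A_i)$, and a $\Theta$-class of $G$ is a $\Theta$-class of exactly one factor.

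First I would show that any product $\square_i D_i$, with each $D_i$ a corner of $A_i$, is a corner of $G$. The natural candidate expansion in general position is $G_1 = \square_i T_i$, $G_2 = \square_i (-T_i)$, where $T_i = A_i \setminus D_i$ is the chunk of $A_i$ complementary to $D_i$ (so $T_i$ is the whole edge-minus-vertex complement if $A_i=K_2$, i.e. again a vertex, and an induced path $P_{n+1}$ if $A_i=C_{2n}$), and $-T_i$ denotes the antipodal copy. Since $D_i$, $T_i$, $-T_i$ is precisely the data of an expansion of $A_i$ in general position, one checks: (a) the decomposition $G_1,G_2$ really is an expansion of $G = \square_i A_i$, because $G_1 \cap G_2 = \square_i (T_i \cap -T_i)$ and $G = G_1 \cup G_2$ on the level of vertex sets — this is a routine product computation; (b) it is antipodal since $G_2 = -G_1$; (c) it is in general position, i.e. every maximal proper antipodal subgraph of $G$ lies entirely in $G_1$ or entirely in $G_2$ but not in $G_1\cap G_2$. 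For (c) use the description above: a maximal proper antipodal subgraph of $G$ shrinks exactly one factor $A_j=C_{2n}$ to one of its antipodal edges $e$ or $-e$, leaving the others intact; since the expansion of $A_j$ according to $T_j,-T_j$ is in general position, the edge $e$ lies entirely in $T_j$ or in $-T_j$ (not in $T_j\cap -T_j$), and the remaining factors $A_i$ for $i\neq j$ lie entirely in neither $T_i$ nor $-T_i$ unless $D_i$ is empty, but then $A_i=K_2$ has no proper antipodal subgraph to interfere — so the whole product lies in $G_1$ or $G_2$ as required. Hence $G_1$ is a chunk and $\square_i D_i = G \setminus G_1$ is a corner.

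Conversely, I would show every corner $C$ of $G$ has this form. Since $G$ itself is the unique maximal antipodal subgraph containing $C$, $C = G\setminus G_1$ for some chunk $G_1$, i.e. there is an expansion in general position $G_1,G_2=-G_1$ of $G$. The strategy is to reconstruct the factorwise data: fix a $\Theta$-class of $G$; it belongs to a unique factor $A_j$, and restricting the expansion to the gated subgraphs $\{x\}\times\cdots\times A_j\times\cdots\times\{x\}$ (fixing all other coordinates) induces, by Lemma~\ref{lem:exp_antipodal} applied inside $G$, either an OM-expansion or a peripheral expansion of that copy of $A_j$; generality of the global expansion forces it to be a \emph{proper peripheral} expansion of $A_j$ — this is because a maximal proper antipodal subgraph of $G$ obtained by shrinking the $A_j$-coordinate must be wholly on one side. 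Collecting these across all factors and checking consistency (the side chosen in coordinate $i$ does not depend on the fixed values of the other coordinates, again by general position) yields chunks $T_i$ of the $A_i$ with $G_1 = \square_i T_i$, hence $C = G\setminus G_1 = \square_i (A_i\setminus T_i) = \square_i D_i$ with $D_i$ a corner of $A_i$.

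The main obstacle I anticipate is the consistency/reconstruction argument in the converse direction: proving rigorously that a chunk $G_1$ of a product must itself be a product of chunks of the factors, i.e. that the ``side'' chosen in coordinate $i$ is independent of the other coordinates. The clean way to handle this is to argue entirely through maximal proper antipodal subgraphs and the general-position condition: for fixed $i$, any two coordinate-lines in direction $i$ sitting in a common $2$-dimensional cell $A_i \square A_j$ (or $A_i\square K_2$) are linked through a maximal proper antipodal subgraph of that cell, and general position of $G_1$ restricted to this cell pins down the two direction-$i$ lines to the same side; connectivity of $G$ then propagates this. Everything else — the product description of antipodal subgraphs of $\square_i A_i$, the explicit form of corners of $K_2$ and $C_{2n}$, and the verification that the candidate $G_1$ is an expansion in general position — is a routine unwinding of definitions and of Lemma~\ref{lem:cornerisweakcorner+peeling} and Lemma~\ref{lem:exp_antipodal}.
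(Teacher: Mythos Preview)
Your forward-direction construction contains a genuine error: the candidate $G_1=\square_i T_i$, $G_2=\square_i(-T_i)$ is \emph{not} an expansion of $G$, because $G_1\cup G_2\neq G$ in general. Already for $G=Q_2=K_2\,\square\,K_2$ with $T_1=T_2=\{-\}$ you get $G_1=\{(-,-)\}$ and $G_2=\{(+,+)\}$, missing two vertices; for $G=C_4\,\square\,C_4$ the count gives $9+9-4=14\neq 16$. The chunk corresponding to the corner $\square_i D_i$ is $G\setminus\square_i D_i$, which is \emph{not} a product of the factor chunks. The same misconception infects your converse direction, where you conclude ``$G_1=\square_i T_i$'' --- this cannot hold, so the consistency argument as stated cannot close.

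The paper avoids this trap by working the other way around. It first takes an arbitrary corner $D=G_2\setminus G_1$ and, using that each hyperplane-type subgraph $A_1\square\cdots\square\{v\}\square\cdots\square A_n$ is antipodal (hence lies entirely in $G_1$ or $G_2$ by general position), extracts factorwise expansions $H_1^{(i)},H_2^{(i)}$ of each $A_i$ and shows these are in general position; this yields corners $D_i$ with $D\subseteq\square_i D_i$. Equality $D=\square_i D_i$ is then forced by a rank argument: contracting all $\Theta$-classes crossing $\square_i D_i$ collapses $\square_i D_i$ to a single vertex, and strict containment would make $\pi_F(G_1)=\pi_F(G)$, producing an expansion of higher rank than $G$ --- impossible. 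The forward direction (every $\square_i D_i$ is a corner) then follows from existence of corners plus symmetry of the product. If you want to salvage your direct approach to the forward direction, you would need to verify directly that $G_1:=G\setminus\square_i D_i$ and $G_2:=-G_1$ form an expansion in general position; this is doable but is not the ``product of chunks'' computation you wrote.
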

\begin{proof}
Let $D = G_2\setminus G_1$ be a corner of $G$, i.e.~$G_1, G_2$ define an expansion in general position.
Every subset of the form $A_1 \square \ldots \square A_{i-1} \square \{v\} \square A_{i+1} \square \ldots  \square A_n$ is an antipodal subgraph, thus it is either completely in $G_1$ or in $G_2$. We can use the latter to define an expansion in general position of $A_i$ according to $H_1, H_2$, by $v\in H_j$ if $A_1 \square \ldots \square A_{i-1} \square \{v\} \square A_{i+1} \square \ldots  \square A_n \in G_j$, for $j\in\{1,2\}$. In fact each maximal antipodal subgraph $A$ of $A_i$ is either completely in $H_1$ or in $H_2$ (but not both) since 
$A_1 \square \ldots \square A_{i-1} \square A \square A_{i+1} \square \ldots  \square A_n$
is a maximal antipodal of $G$ either completely in $G_1$ or in $G_2$ (but not both). Moreover, since $G_1 = -G_2$ also $H_1= -H_2$.

It remains to prove that $H_1, H_2$ are isometric. Let $v_1, v_2 \in H_1$. If there exists a unique shortest $v_1,v_2$-path $P$ in $A_i$, then pairs of vertices of $A_1 \square \ldots \square A_{i-1} \square \{v_1\} \square A_{i+1} \square \ldots  \square A_n$ and $A_1 \square \ldots \square A_{i-1} \square \{v_2\} \square A_{i+1} \square \ldots  \square A_n$ also have unique shortest paths. This implies that also $A_1 \square \ldots \square A_{i-1} \square P \square A_{i+1} \square \ldots  \square A_n \in G_1$, since $G_1$ is isometric. Thus $P\in H_1$. The only option that $v_1, v_2$ do not have a unique $v_1,v_2$-path is that $A_i$ is a cycle of length $2k$, for $k>2$, and $v_1, v_2$ antipodal vertices in $A_i$. Then for each vertex $u$ of $A_i$ different form $v_1,v_2$ holds that $A_1 \square \ldots \square A_{i-1} \square \{u\} \square A_{i+1} \square \ldots  \square A_n  \in G_2$, thus $u\in H_2$. But then the neighbors of $v_1$ in $A_i$ are in $H_2$ which by the previous case implies that  $v_1 \in H_2$. Similarly $v_2 \in H_2$, which cannot be, since then $G_2 = G$. This proves that $H_1, H_2$ are isometric. In particular, if $A_i$ is a cycle $C_{2n}$, then $H_2, H_2$ are paths $P_{n+1}$, and if $A_i$ is an edge $K_2$, then $H_1, H_2$ are vertices. Thus each $D_i = H_2\setminus H_1$ is a corner.

By definition of the corners $D_j$, it holds $D = (G_2\setminus G_1) \subset  \square_i D_i$, i.e.~$G\setminus\square_i D_i \subset G_1$. We prove that the equality holds. Let $F$ be a set of $\Theta$-classes that cross $\square_i D_i$. Contracting all the $\Theta$-classes in $F$ gives a COM $\pi_F(G)=  \square_i A'_i$ where $A'_i = \pi_F(A_i)$ is a 4-cycle if $A_i$ is a cycle and $A'_i = A_i$ if $A_i$ is an edge. Thus the rank of $\pi_F(G)$ is the same as the rank of $G$. Now if $(G_2\setminus G_1) \subsetneq  \square_i D_i$, it holds $\pi_F(G_1) = \pi_F(G)$, since $\pi_F(\square_i D_i)$ is a vertex. Defining the expansion of $\pi_F(G_1)$ according to $H_1=\pi_F(G_1)=\pi_F(G), H_2 = \pi_F(G_2)=\pi_F(G)$, gives a graph $H$ that has a higher rank than $\pi_F(G)$ and $G$. But $H$ can be obtain as a contraction of the graph $H'$ obtained by expanding $G$ with respect to $G_1$ and $G_2$. Since $G_1, G_2$ define an expansion in general position $H'$ has the same rank as $G$. This is impossible.

We have proved that if $G$ has a corner, then it is of the form $\square_i D_i$. As mentioned, every OM has a corner. By symmetry, every set of vertices of the form $\square_i D_i$ is a corner of $G$.
\end{proof}

We shall need the following property about hypercellular graphs.
\begin{lem}\label{lem:hypercellularzoneclosed}
Every zone graph $\zeta_f(G)$ of a hypercellular graph $G$ is hypercellular.
\end{lem}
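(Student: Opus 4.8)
The goal is to show that the class of hypercellular graphs is closed under the zone graph operation $\zeta_f$. Recall from Section~\ref{sec:prel} that $\zeta_f(G)$ has as vertices the minimal antipodal subgraphs of $G$ crossed by $E_f$ (which in a hypercellular graph are the edges of $E_f$ themselves, so $\zeta_f(G)$ is just the classical zone graph of the partial cube) and two such edges are adjacent if they lie in a common rank-$2$ cell. Since hypercellular graphs are COMs that are known to be closed under contractions, and $\zeta_f(G)$ can be understood as a contraction of the carrier of $E_f$, the natural strategy is to reduce to contraction-closure of hypercellularity, which follows from~\cite{Che-16}. However, to keep the argument self-contained in the graph language, I would argue directly using the cell structure.

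First I would recall and use the structural description of the zone graph in hypercellular (more generally, in partial cube / COM) settings: $\zeta_f(G)$ is isometric to the contraction $\pi_f$ restricted to the union of all cells crossed by $E_f$ (the ``carrier'' $N_f(G)$ of $E_f$), and that this carrier is itself a convex subgraph, hence a hypercellular graph in its own right by convexity-closure. It then suffices to show that contracting $E_f$ in a hypercellular graph yields a hypercellular graph. For this, the two things to check are: (1) every antipodal subgraph of $\pi_f(G)$ is a cell, i.e. a Cartesian product of even cycles and edges; and (2) the three-cells-in-a-common-cell axiom is preserved. For (1), by Lemma~\ref{lem:exp_antipodal} every antipodal subgraph of $\pi_f(G)$ lifts to an antipodal subgraph $A$ of $G$, which is a cell $\square_i A_i$; contracting $E_f$ either leaves $A$ untouched (if $E_f$ does not cross $A$) or contracts one cycle factor $A_i=C_{2k}$ to $C_{2k-2}$ (still an even cycle, or an edge, or a point if $k=1$) — in all cases the image is again a Cartesian product of even cycles and edges, hence a cell. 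For (2), one takes three cells of rank $k$ in $\pi_f(G)$ pairwise meeting in rank $k-1$ and sharing a rank $k-2$ cell, lifts them via Lemma~\ref{lem:exp_antipodal} to cells in $G$ with the analogous (or one rank higher) intersection pattern — here one must be slightly careful that the lifted intersections still satisfy the rank conditions, using that contraction of a cell drops rank by at most one and that the three lifted cells all interact with $E_f$ in a compatible way — and then applies the hypercellularity axiom for $G$ to conclude the three lifts lie in a common cell, whose image contains the three original cells.

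The main obstacle I expect is step (2): correctly tracking how ranks of the cells and of their pairwise/triple intersections behave under lifting along $\pi_f$, since a priori several of the three cells could have $E_f$ crossing them and several might not, producing a case analysis on whether $E_f$ crosses zero, one, two, or three of the lifted cells and their common intersection. The clean way to handle this is to observe that $E_f$ crosses a lift if and only if it crosses the image-cell ``after the fact'' together with the convexity of the carrier $N_f(G)$: if $E_f$ crosses the common rank $k-2$ image cell then it crosses all three lifts and all pairwise intersections, so every rank goes up by exactly one and the axiom for $G$ applies verbatim; if $E_f$ does not cross that common image cell, then at most the outer cells are affected and one reduces to the axiom inside the subgraph $N_f(G)$ minus (or modulo) $E_f$, which is again hypercellular by convexity. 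Assembling these cases gives the result. As a shortcut, one may instead simply cite that hypercellular graphs are closed under contraction~\cite{Che-16} and that $\zeta_f(G)=\pi_f(N_f(G))$ with $N_f(G)$ convex hence hypercellular, which reduces the whole lemma to two already-known facts; I would present that as the primary argument and relegate the direct cell-chasing verification to a remark.
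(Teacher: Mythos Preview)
Your proposal rests on the identification $\zeta_f(G)\cong\pi_f(N_f(G))$, and this is where it breaks down. The zone graph $\zeta_f$ corresponds to \emph{contraction} in the oriented-matroid sense (restricting to covectors with $f$-entry $0$), whereas $\pi_f$ corresponds to \emph{deletion}; the paper states this explicitly in Section~\ref{sec:prel}. These are genuinely different operations, and the claimed equality already fails on a single hexagon: for $G=C_6$ and any $\Theta$-class $E_f$, the carrier is all of $C_6$, so $\pi_f(N_f(G))=C_4$, while $\zeta_f(G)=K_2$ (the two $E_f$-edges, adjacent because they lie in the common cell $C_6$). Since hypercellular graphs are precisely the COMs whose cells may be long even cycles, not just cubes, this is not a marginal case. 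Consequently your ``shortcut'' argument (closure of hypercellularity under $\pi_f$ applied to the convex carrier) proves the wrong statement, and your direct cell-chasing also derails: you invoke Lemma~\ref{lem:exp_antipodal} to lift antipodal subgraphs along $\pi_f$, but that is the inverse of deletion, not of the zone-graph operation, so the lifts you obtain need not correspond to cells of $\zeta_f(G)$ at all. This is also why you anticipate an awkward case split on whether $E_f$ crosses zero, one, two, or three of the lifted cells---a symptom of using the wrong lift.

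The paper's argument avoids all of this by working directly with $\zeta_f$. First one checks (a one-line computation on Cartesian factors) that the zone graph of a cell is again a cell: $\zeta_f(C_{2k})=K_2$ and the other factors are untouched. Then every rank-$r$ cell of $\zeta_f(G)$ is, by the construction of the zone graph, the $\zeta_f$-image of a rank-$(r+1)$ cell of $G$ that is crossed by $E_f$; crucially the rank goes up by \emph{exactly} one for every cell and every intersection, because everything in $\zeta_f(G)$ comes from antipodal subgraphs crossed by $E_f$. So a three-cell configuration of ranks $(r,r-1,r-2)$ in $\zeta_f(G)$ lifts verbatim to a configuration of ranks $(r+1,r,r-1)$ in $G$, the hypercellularity axiom for $G$ supplies a common cell $H$, and $\zeta_f(H)$ is the desired common cell downstairs. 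No case analysis is needed.
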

\begin{proof}
Every zone graph of the Cartesian product of even cycles and edges is the Cartesian product of even cycles and edges, as it can easily be checked. Let $\zeta_f(G)$ be a zone graph of a hypercellular graph $G$. Then every cell of rank~$r$ in $\zeta_f(G)$ is an image of a cell of rank~$r+1$ in $G$. Hence  for every three rank~$r$ cells pairwise intersecting in rank~$r-1$ cells and sharing a rank~$r-2$ cell from $\zeta_f(G)$, there exist three rank~$r + 1$ cells pairwise intersecting in rank~$r$ cells and sharing a rank~$r-1$ cell in $G$. Additionally the latter three cells lie in a common cell $H$ in $G$. Then the image of $H$ in $\zeta_f(G)$ is a common cell of the three cells from $\zeta_f(G)$.
\end{proof}

Let  $E_e$ be a $\Theta$-class of a COM $G$. As usual, see e.g.~\cite{Ban-18,Che-16}, we call the union of antipodal subgraphs crossed by $E_e$ the \emph{carrier} of  $E_e$. The following is another generalization of the corresponding result for cellular graphs~\cite{Ban-96} and as mentioned above for median graphs.

\begin{thm}\label{thm:hypercellular}
Every hypercellular graph $G$ has a corner peeling.
\end{thm}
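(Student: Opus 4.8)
The plan is to mimic the argument of Theorem~\ref{thm:peeling}, replacing the use of Proposition~\ref{prop:rank2corner} by an inductive argument on the rank that exploits the zone graph. By Lemma~\ref{lem:cornerisweakcorner+peeling} it suffices to prove that every hypercellular graph $G$ (with at least one edge) has a corner; then $G$ minus that corner is again a COM, and since corners live in maximal antipodal subgraphs (cells) of $G$ and removing a corner only modifies one cell, the resulting graph is again hypercellular, so we can iterate and obtain a corner peeling.

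To produce a corner in $G$, first I would pick a $\Theta$-class $E_e$ and pass to the zone graph $\zeta_e(G)$, which by Lemma~\ref{lem:hypercellularzoneclosed} is again hypercellular and has strictly smaller rank than the carrier of $E_e$ (each cell of $\zeta_e(G)$ is the image of a cell of $G$ crossed by $E_e$, with rank dropped by one). By induction on the rank, $\zeta_e(G)$ has a corner, hence (unwinding the contraction) a minimal cell $Q$ crossed by $E_e$ sits at an ``extreme'' position: the cell $A$ of $G$ projecting to the cell containing that corner is a maximal cell incident to $E_e$ that is not sandwiched between two larger cells along $E_e$. Intuitively $A$ is a cell of the carrier of $E_e$ that is ``on the boundary''. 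Inside this distinguished cell $A$, which is a Cartesian product of even cycles and edges, I would then produce a corner of $A$ of the shape $\square_i D_i$ guaranteed by Lemma~\ref{lem:cornercell}, chosen so that it points ``outward'' away from the rest of the carrier and away from any other cell of $G$ that meets $A$.

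The key verification — and the main obstacle — is showing that such a cell-corner $C=\square_i D_i\subset A$ is actually a corner of all of $G$, i.e., that no vertex of $C$ lies in a maximal cell other than $A$. Here I would argue by contradiction exactly as in the rank~$2$ proof (Proposition~\ref{prop:rank2corner}): if some $v\in C$ lies in another maximal cell $A''$, then using the fact that convex cycles span the cycle space of a partial cube (\cite[Lemma 13]{Che-16}) one can assume $A$ and $A''$ share a face, and then the three-cells-in-a-common-cell axiom of hypercellular graphs forces either an enlargement of $A$ or the existence of a cell adjacent to $A$ at a forbidden $\Theta$-class, contradicting the extremal choice of $A$ coming from the corner of $\zeta_e(G)$. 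Making the bookkeeping precise about which $\Theta$-classes cross which cells, and checking that the ``outward'' choice of $D_i$ in each factor is simultaneously consistent, is the delicate part; the hypercellularity axiom and Lemma~\ref{lem:linetree}-type tree structure of the carrier are what make it go through. Finally, the base cases (rank~$0$ and rank~$1$, where $G$ is a point or has a pendant-edge cell) and the reduction step that removing a corner preserves hypercellularity are routine and I would dispatch them quickly.
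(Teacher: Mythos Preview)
Your overall strategy---pass to a zone graph, find a corner there by induction, and lift it back to a corner of a distinguished cell of $G$---is exactly the backbone of the paper's argument. However, there is a genuine gap in the step you dismiss as routine: removing a corner from a hypercellular graph need \emph{not} produce a hypercellular graph. The three-cell axiom can fail after the removal, because the common cell that used to contain three given mutually incident cells may be precisely the cell $A$ whose corner you deleted, and $A\setminus C$ is only a chunk of $A$, not a cell. The paper flags this explicitly as ``the technical difficulty of the proof'' and circumvents it by proving the stronger statement for a larger, ad hoc family $\mathcal{F}$ of COMs (those whose antipodal subgraphs are cells, whose carriers are convex, and which are closed under taking zone graphs). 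Hypercellular graphs sit inside $\mathcal{F}$, and $\mathcal{F}$ \emph{is} stable under removing the particular corner constructed, which is what makes the induction go through.

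A second, smaller issue is your choice of an arbitrary $\Theta$-class $E_e$. The paper first iterates to a class $E_f$ whose carrier covers the entire halfspace $E_f^+$; this is what cleanly forces the lifted corner $D\subset E_f^+$ to lie in a unique maximal cell of $G$, since any other maximal cell touching $D$ would then have to be crossed by $E_f$, hence visible in $\zeta_f(G)$, contradicting that $D_f$ was a corner there. With an arbitrary $E_e$, a competing maximal cell $A''$ meeting $D$ could sit entirely in $E_e^+$ and be invisible to the zone graph, and the contradiction you sketch via the three-cell axiom and convex-cycle spanning is not obviously enough to rule this out.
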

\begin{proof}
We prove the assertion by induction on the size of $G$. The technical difficulty of the proof is that removing a corner in a hypercellular graph possibly produces a non-hypercellular graph. Hence we shall prove the above statement for the larger family $\mathcal{F}$ of COMs defined by the following properties:
\begin{enumerate}[(1)]
\item Every antipodal subgraph of $G\in\mathcal{F}$ is a cell.
\item Every carrier of $G\in\mathcal{F}$ is convex.
\item Every zone-graph of $G\in\mathcal{F}$ is in $\mathcal{F}$.
\end{enumerate}

We first prove that hypercellular graph are a part of $\mathcal{F}$. By Lemma \ref{lem:hypercellularzoneclosed} only the first two properties must be checked. Now, (1) holds by definition of hypercellular graphs. Moreover, (2) follows from the fact that for any $\Theta$-class $E_e$ in a hypercellular graph the carrier of $E_e$  is gated \cite[Proposition 7]{Che-16}, thus also convex.
 
We now prove that the graphs in $\mathcal{F}$ have a corner peeling. Let $G \in \mathcal{F}$ and $E_e$ an arbitrary $\Theta$-class in $G$. Since the carrier of $E_e$ is convex the so-called Convexity Lemma \cite{imrich1998convexity} implies that for any edge $g\in E_e^+$ with exactly one endpoint in the carrier its $\Theta$-class $E_g$ does not cross the carrier.
 Now if the union of cells crossed by $E_e$ does not cover the whole $E_e^+$, then for any edge $g$ in $E_e^+$ with exactly one endpoint in the union, one of $E_g^+$ or $E_g^-$ is completely in $E_e^+$. Repeating this argument with $E_g$ one can inductively find a $\Theta$-class $E_f$ with the property that the carrier of $E_f$ completely covers $E_f^+$, without loss of generality.

 Let $\zeta_f(G)$ be the zone graph of $G$ with respect to $E_f$, i.e., the edges of $E_f$ are the vertices of $\zeta_f(G)$ and two such edges are connected if they lie in a common convex cycle. By (3) $\zeta_f(G)$ is in $\mathcal{F}$, thus by induction $\zeta_f(G)$ has a corner $D_f$. By definition there is a maximal antipodal subgraph $A_f$ in $\zeta_f(G)$ such that the corner $D_f$ is completely in $A_f$. Moreover, there exists a unique maximal antipodal subgraph $A$ in $G$ whose zone graph is $A_f$.

We lift the corner $D_f$ from $A_f$ to a corner $D$ of $A$ in the following way. If $E_f$ in $A$ corresponds to an edge factor $K_2$, then $A$ is simply $K_2 \square A_f$. In particular we can define $D= \{v\} \square D_f$ where $v$ is a vertex of $K_2$ in $E_f^+$. By Lemma \ref{lem:cornercell}, this is a corner of $A$. Since $D_f$ lies only in the maximal antipodal graph $A_f$, $D$ lies only in $A$.

Otherwise, assume $E_f$ in $A$ corresponds to a $\Theta$-class of a factor $C_{2k}$ (an even cycle). We can write $A=C_{2k} \square A'$. Then $A_f = K_2 \square A'$ with a corner $D_f = \{v\} \square D'$, where $D'$ is a corner of $A'$ by Lemma \ref{lem:cornercell}. We lift $D_f$ to $D = P_{k-1} \square D'$. Here $P_{k-1}$ is the path in $C_{2k}$ consisting of the vertices in $E_f^+$ apart from the one lying on the edge not corresponding to $v$ in the zone graph. As above since $D_f$ lies only in the maximal antipodal graph $A_f$, $D$ lies only in $A$.

We have proved that $G$ has a corner $D$. To prove that it has a corner peeling it suffice to show that $G \backslash D$ is a graph in $\mathcal{F}$. Since removing a corner does not produce any new antipodal subgraph, all the antipodal subgraphs of $G \backslash D$ are cells, showing (1). The latter holds also for all the zone graphs of $G \backslash D$. To prove that (2) holds for $G \backslash D$ consider a $\Theta$-class $E_e$ of $G \backslash D$. By Lemma \ref{lem:cornerisweakcorner+peeling}, $G \backslash D$ is an isometric subgraph of $G$, i.e.~all the distances between vertices are the same in both graphs. Since the carrier of $E_e$ in $G$ is convex and removing a corner does not produce any new shortest path, the carrier of  $E_e$  is convex in $G \backslash D$. 
The same argument can be repeated in any zone graph of $G \backslash D$. This finishes the proof.
\end{proof}

We have shown corner peelings for COMs of rank~$2$ and hypercellular graphs. A common generalization are Pasch graphs~\cite{Che-94,Che-16}, which form a class of COMs~\cite{Kna-17} that exclude the examples from~\cite{Cha-18,Tra-04,Ban-96}:
\begin{quest}\label{quest:pasch}
 Does every Pasch graph have a corner peeling?
\end{quest}

\section{The minimum degree in antipodal partial cubes}\label{sec:antipodal}
Las Vergnas' conjecture can be seen as a statement about the minimum degree of an OM of given rank. Here we examine the relation of rank and minimum degree in general antipodal partial cubes.
\subsection{Lower bounds}

As stated in Section \ref{sec:intro}, if $G$ is the tope graph of an OM, then $r(G)\leq\delta(G)$, see \cite[Exercise 4.4]{bjvestwhzi-93}.
In general rank~$r$ antipodal partial cubes the minimum degree is not bounded from below by $r$. More precisely:
\begin{prop}\label{prop:nolowerbound}
 For every $r\geq 4$ there is an antipodal partial cube of rank~$r$ and minimum degree $4$. Moreover,  there is an antipodal partial cube of rank~$4$ and minimum degree $3$.
\end{prop}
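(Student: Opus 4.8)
The plan is to construct explicit antipodal partial cubes realizing each of the two claims, using the fact (established earlier via Theorem~\ref{thm:char} and the closure properties recalled in Section~\ref{sec:prel}) that antipodal partial cubes are closed under Cartesian products and, more flexibly, under antipodal expansions, i.e.\ expansions $H_1,H_2$ with $H_1=-H_2$. The governing intuition is that taking a Cartesian product with $K_2$ raises both the rank and the minimum degree by exactly one, so products alone cannot decouple rank from degree; what we need instead are antipodal expansions that raise the rank without creating any new vertex of small degree, i.e.\ peripheral-type expansions at the low-degree vertices.

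For the first assertion I would start from a ``seed'' antipodal partial cube $G_4$ of rank~$4$ and minimum degree~$4$ — for instance one may take the $4$-cube $Q_4$, or, more to the point of showing the phenomenon is genuinely non-OM, a suitable antipodal partial cube of rank~$4$ whose vertices all have degree~$4$ but which is not a cube (the Desargues-type example alluded to in Figure~\ref{fig:desargues} is a natural candidate to keep in mind). Then I would perform a sequence of antipodal expansions $G_r \leadsto G_{r+1}$, each time choosing the expansion so that $G_{r+1}$ has rank $r+1$ while every newly created vertex already has degree at least~$4$ and no old vertex drops in degree. Concretely, one picks a halfspace-like convex subgraph $H_1$ of $G_r$ that is ``fat enough'' (every vertex of $H_1$, and in particular every vertex on the separating boundary $H_1\cap(-H_1)$, keeps degree $\ge 4$ after the expansion), sets $H_2=-H_1$, and checks via the covector/antipodal-subgraph dictionary that the resulting graph is again an antipodal partial cube and that its rank has gone up by one (because the expansion is not a Cartesian-product expansion). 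Iterating gives the family for all $r\ge 4$.

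For the rank~$4$, minimum-degree~$3$ statement I would exhibit a single explicit graph. The natural source is again an antipodal expansion, this time of a rank~$3$ antipodal partial cube — by~\cite{Fuk-93} these are exactly the planar antipodal partial cubes — chosen so that the expansion creates (at least) one vertex of degree~$3$ while pushing the rank up to~$4$. One should pick a rank~$3$ antipodal partial cube $G'$ with a vertex $v$ of degree~$3$ lying in a convex subgraph that can serve as the boundary of an antipodal expansion in such a way that $v$ is \emph{not} incident to the new $\Theta$-class; then $v$ retains degree~$3$ in the expanded rank~$4$ graph, and one verifies (again using the axioms (FS), (SE) translated to antipodal subgraphs, or simply Theorem~\ref{thm:char}) that the result is genuinely rank~$4$. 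Drawing the graph and listing its vertices as sign-vectors in $\{+,-\}^5$ makes the verification of ``antipodal'', ``isometric'', and ``rank~$4$'' a finite check. It is worth noting that such a graph necessarily fails to be an OM, since in an OM minimum degree is at least the rank; this is the point of the example.

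The main obstacle, and the place where care is needed, is controlling the rank under the expansions: it is easy to build antipodal partial cubes of large minimum degree but small rank (iterated products, or very ``thin'' expansions), whereas here we must certify that each expansion genuinely increases the rank by one — equivalently, by Lemma~\ref{lem:exp_antipodal} and the remarks on expansions, that the chosen expansion is not (equivalent to) a Cartesian-product expansion on the factor corresponding to the new $\Theta$-class, and that the new graph still contracts onto $Q_{r+1}$. The cleanest way to discharge this is to track a fixed maximal antipodal subgraph through the construction and show it grows in rank, using that antipodal subgraphs contract to antipodal subgraphs and that a maximal antipodal subgraph of an antipodal partial cube has the same rank as the whole graph (Lemma~\ref{lem:rankofcell}). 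The degree bookkeeping, by contrast, is local and routine: in an antipodal expansion $H_1,-H_1$ a vertex's degree changes only if it lies on $H_1\cap(-H_1)$, and there it increases by exactly one, so one only has to arrange the boundary vertices to have the right degree before expanding. I would present the two constructions as a short explicit description plus a figure, and relegate the routine checks to a sentence invoking Theorem~\ref{thm:char}.
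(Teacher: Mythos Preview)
Your proposal is a strategy outline, not a proof: you never actually construct a single graph, and the two places where real work is needed are left as promises. For the first family you say ``one picks a halfspace-like convex subgraph $H_1$ of $G_r$ that is `fat enough'\,'' and then ``checks'' that rank goes up --- but you neither specify $H_1$ nor carry out the check, and you yourself flag rank control as ``the main obstacle''. Your proposed fix, tracking a maximal antipodal subgraph via Lemma~\ref{lem:rankofcell}, does not help here: in an \emph{antipodal} partial cube the unique maximal antipodal subgraph is the whole graph, so this is circular. Your degree bookkeeping is also not quite right: for $v\in H_1\cap(-H_1)$ the degree of the $H_1$-copy in the expansion is $|\{u\in H_1:uv\in E(H)\}|+1$, which can be \emph{smaller} than $\deg_H(v)$ whenever $v$ has neighbours in $(-H_1)\setminus H_1$; it is not simply ``increases by exactly one''. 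For the rank~$4$, degree~$3$ example the situation is the same: you describe the properties a graph should have but do not exhibit one.

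The paper's argument avoids iteration entirely and is much shorter. For the first part it cites the construction from~\cite{Kna-17}: given any partial cube $G$ on $n$ $\Theta$-classes, replace two antipodal copies of $Q_n$ inside $Q_{n+3}$ by $G$ and $-G$ to obtain an antipodal partial cube $A_G$ with $\delta(A_G)=\delta(G)+3$ and rank at least $n+2$. Taking $G$ a path of length $k>1$ gives $\delta=4$ and rank $k+2$, settling all $r\ge 4$ in one stroke. For the second part the paper removes from $Q_n$ a vertex $v$, its antipode $-v$, and $i$ neighbours of $-v$ (with $n\ge 4$, $1\le i<n$), observes the result is affine, and takes two antipodal copies; the resulting antipodal partial cube has minimum degree $n-1$ and rank $n$, so $n=4$ gives the desired example. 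If you want to salvage your approach you would need to produce, at minimum, one explicit nontrivial antipodal expansion that provably raises the rank while keeping $\delta=4$, and then argue it can be iterated; but the paper's one-shot constructions make this unnecessary.
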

\begin{proof}
 In~\cite{Kna-17} it is been shown that every partial cube $G$ with $n$ $\Theta$-classes -- thus embeddable in $Q_n$ -- is a convex subgraph of an antipodal partial cube $A_G$. Here, $A_G$ is obtained by replacing in a $Q_{n+3}$ one $Q_n$ by $G$ and its antipodal $Q_n$ by $-G$. It is straight-forward to see that the minimum degree of $A_G$ is $\delta(G)+3$ and that the rank of $A_G$ is at least $n+2$. Indeed, for instance taking $G$ as a path of length $k>1$ we get $\delta(A_G)=4$ and $r(A_G)=k+2$.
 
 Another construction is as follows. Take $Q_n^{-\, -}(i)$, with $1\leq i<n$ and $n\geq 4$, to be the graph obtained from $Q_n$ by removing a vertex $v$, its antipode $-v$ and $i$ neighbors of $-v$. Such a graph is affine and each antipode (in $Q_n$) of the removed neighbors of $-v$ is without the antipode in $Q_n^{-\, -}(i)$, is of degree $n-1$ and of rank~$n-1$. Then construct the antipodal graph taking two antipodal copies of it. Such graph will have minimum degree $n-1$ and rank~$n$. For $n=4$ this gives the second part of the result.
\end{proof}

On the other hand, it is shown in~\cite{polat2018some} that if an antipodal partial cube $G$ has $\delta(G)\leq 2$, then $r(G)=\delta(G)$. This implies that if an antipodal partial cube $G$ has $r(G)\leq 3$, then $r(G)\leq\delta(G)$.

In relation to a question about cubic non-planar partial cubes we ask the following:
\begin{quest}
Are there antipodal partial cubes with minimum degree $3$ and arbitrary rank? 
\end{quest}

Indeed, since planar antipodal partial cubes are tope graphs of OMs of rank~$3$, see~\cite{Fuk-93}, any example for the above question has to be a non-planar antipodal partial cube of minimum degree $3$. It has been wondered whether the only non-planar cubic partial cube is the (antipodal) Desargues graph~\cite{Kla-08}, see the left of Figure~\ref{fig:desargues}. To our knowledge even the restriction to antipodal partial cubes remains open. For transitive cubic partial cubes it is known that the Desargues graphs is the only non-planar one, see~\cite{Mar-17}. On the other hand, it is open whether there are infinitely many non-planar partial cubes of minimum degree $3$.

\subsection{Upper bounds}

Bounding the minimum degree in a partial cubes $G$ from above by its rank is a generalization of Las Vergnas conjecture. As discussed in previous sections Las Vergnas conjecture is proved for OMs of rank at most $3$. In fact tope graphs of OMs of rank~$3$ are even $\Theta$-Las Vergnas, by Theorem~\ref{thm:simplical_on_theta} and the fact that they are Euclidean. We show that this property extends to general  antipodal partial cubes of rank~3. 

For this approach we introduce a couple of natural notions from~\cite{Kna-17}. A partial cube $G$ is called \emph{affine} if it is a halfspace $E_e^+$ of an antipodal partial  cube. The \emph{antipodes} $A(G)$ of an affine partial cube are those $u\in G$ such that there is $-u\in G$ such that the \emph{interval} $$[u,-u]=\{v\in G\mid \text{there is a shortest path from } u \text{ to } -u \text{ through } v\}$$ coincides with $G$. The antipodes of $G$ are exactly the vertices of $E_e^+$ incident to $E_e$ when $G$ is viewed as subgraph of $G'$. We need a auxiliary statement about the rank of affine partial cubes.

\begin{lem}\label{lem:rankantipodal}
 If an affine partial cube $G$ is a halfspace of an antipodal partial cube $G'$ of rank~$r$, then $G$ has rank at most $r-1$.
\end{lem}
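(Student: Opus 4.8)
The statement to prove is: if an affine partial cube $G$ is a halfspace $E_e^+$ of an antipodal partial cube $G'$ of rank $r$, then $r(G)\leq r-1$. Recall that the rank of a partial cube is the largest $k$ such that it contracts to $Q_k$, and equivalently is the VC-dimension of its vertex set viewed as a set system. The plan is to argue by contradiction: suppose $r(G)=r$, i.e.\ $G$ contracts to $Q_r$. I will then lift this to show $G'$ must contract to $Q_{r+1}$, contradicting $r(G')=r$.

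\textbf{Key steps.} First, note that $G'$ embeds in $Q_n$ where $n$ is its isometric dimension, and $G=E_e^+$ uses $n-1$ of these coordinates nontrivially plus the coordinate $e$ fixed to $+$; so $G$, as a partial cube on its own, has isometric dimension at most $n-1$. Suppose for contradiction $r(G)=r$, so there is a set $F$ of $r$ coordinates (none equal to $e$) such that the contraction $\pi_F(G)$ equals $Q_r$; equivalently, the projection of $V(G)$ onto the coordinates in $F$ is all of $\{+,-\}^r$. Now I want to show the coordinate $e$ together with $F$ is shattered by $V(G')$, i.e.\ the projection of $V(G')$ onto $F\cup\{e\}$ is all of $\{+,-\}^{r+1}$. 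For any sign pattern $\sigma\in\{+,-\}^F$, since $\pi_F(G)=Q_r$ there is a vertex $v\in G=E_e^+$ with $v|_F=\sigma$ and $v_e=+$; taking its antipode $-v\in G'$ (which exists since $G'$ is antipodal) gives a vertex with $(-v)|_F=-\sigma$ and $(-v)_e=-$. As $\sigma$ ranges over all of $\{+,-\}^F$, the vertices $v$ realize all patterns $(\sigma,+)$ on $F\cup\{e\}$ and the antipodes $-v$ realize all patterns $(-\sigma,+{\mapsto}-)=(\tau,-)$ for all $\tau\in\{+,-\}^F$. Hence $V(G')$ shatters $F\cup\{e\}$, so the VC-dimension of $G'$ is at least $r+1$, i.e.\ $r(G')\geq r+1$, contradicting $r(G')=r$. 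Therefore $r(G)\leq r-1$.

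\textbf{Alternative / care points.} If one prefers to avoid the VC-dimension characterization and work directly with contractions, the same idea works: contracting $G'$ along all coordinates outside $F\cup\{e\}$ yields an antipodal partial cube $G''$ of isometric dimension $r+1$; its halfspace $(G'')_e^+$ contains (the image of) a contraction of $G$, which by hypothesis is all of $Q_r$; since an antipodal partial cube of isometric dimension $r+1$ containing a full $Q_r$ as a halfspace must be $Q_{r+1}$ itself (the antipode of every vertex of the halfspace lies in the other halfspace, forcing both halfspaces to be full $Q_r$'s, and then all $r+1$ coordinates are "independent"), we get $r(G')\geq r+1$, a contradiction. The one point requiring a little care is the claim that contraction and restriction-to-a-halfspace interact well — that $\pi_{F}$ restricted to $E_e^+$ gives a halfspace of $\pi_F(G')$ — but this is immediate since $e\notin F$ and contractions only identify vertices agreeing on the contracted coordinates, which preserves the $e$-coordinate. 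I expect no serious obstacle here; the main thing is to phrase the antipodality argument cleanly so that the "all of $\{+,-\}^{r+1}$" conclusion is transparent.
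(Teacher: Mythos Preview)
Your proposal is correct and follows essentially the same approach as the paper. The paper's proof is a two-line version of your ``alternative'' formulation: if $G$ contracts to $Q_k$, apply the same contractions to $G'$ to obtain an antipodal partial cube $H$ with $Q_k$ as a halfspace; antipodality forces the other halfspace to also be $Q_k$, so $H=Q_{k+1}$ and hence $r(G')\geq k+1$. Your VC-dimension phrasing is just a relabeling of this argument, and the care point you flag (that contraction by classes different from $e$ commutes with taking the $e$-halfspace) is exactly the implicit step the paper uses when it says ``the same sequence of contractions in $G'$ yields a minor $H$ with $Q_k$ as a halfspace.''
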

\begin{proof}
 Suppose there is a sequence of contractions from $G$ to $Q_k$. Then the same sequence of contraction in $G'$ yields a minor $H$ with $Q_k$ as a halfspace. Since $H$ is antipodal, $H=Q_{k+1}$. 
\end{proof}
%
%

\begin{lem}\label{lem:LVaffinernk2}
 Every affine partial cube of rank at most $2$ has a vertex of degree at most $2$ among its antipodes.
\end{lem}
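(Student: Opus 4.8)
The plan is to induct on the number $n$ of $\Theta$-classes of $G$ (its isometric dimension), relying on three facts recalled above: affine partial cubes are closed under contraction (\cite{Kna-17}), contraction never increases the rank, and $A(\pi_f(G))=\pi_f(A(G))$ for every $\Theta$-class $E_f$ (Lemma~\ref{lem:antipodesinaffines}). For the base case $n\le 2$ the graph $G$ is an isometric subgraph of $Q_2=C_4$, hence has maximum degree at most $2$; as $G$ is affine, the $\Theta$-class witnessing this carries an edge, so $A(G)\neq\emptyset$ and any antipode works. (For $n\le 1$, $G$ is an edge or a vertex.)

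Let $n\ge 3$. Contracting a $\Theta$-class $E_f$ gives an affine partial cube $H=\pi_f(G)$ of rank at most $2$ with $n-1$ classes, so by induction $H$ has an antipode $w$ with $\deg_H(w)\le 2$, and $w=\pi_f(v)$ for some $v\in A(G)$ by Lemma~\ref{lem:antipodesinaffines}. The basic observation is that if $v$ is not incident to $E_f$ then $\deg_G(v)=\deg_H(w)\le 2$ and we are done: in a partial cube two distinct neighbours of a vertex cannot be joined by an edge of a single fixed $\Theta$-class, so contracting $E_f$ does not alter the degree of a vertex not incident to $E_f$. Hence the task reduces to choosing $E_f$ so that the low-degree antipode obtained from the induction has a preimage in $A(G)$ avoiding $E_f$.

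To this end, assume for contradiction that every antipode of $G$ has degree $\ge 3$, and let $v_0\in A(G)$ be of minimum degree. A short separate induction (contracting a class incident with $v_0$ and comparing degrees, using that $G$ has no $Q_3$ minor, which in particular rules out $G=Q_3$) shows $\deg_G(v_0)<n$, so some $\Theta$-class $E_f$ misses $v_0$. Then $\pi_f(v_0)$ is still an antipode of $H$ of degree $\ge 3$, so the antipode $w$ of $H$ with $\deg_H(w)\le 2$ must come from an edge $v^{*}v^{**}\in E_f$ of $G$ with, say, $v^{*}\in A(G)$. Since contracting $v^{*}v^{**}$ identifies the merged vertex's neighbours exactly along the $4$-cycles of $G$ through $v^{*}v^{**}$, one gets $\deg_H(w)=\deg_G(v^{*})+\deg_G(v^{**})-2-p$ with $p$ the number of such $4$-cycles; combined with $p\le\deg_G(v^{**})-1$ and $\deg_G(v^{*})\ge 3$, the bound $\deg_H(w)\le 2$ forces $\deg_G(v^{*})=3$ and $p=\deg_G(v^{**})-1$. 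Analysing the (at most) three edges at $v^{*}$ under the hypothesis that $G$ has no $Q_3$ minor then constrains the local picture tightly enough to produce, inside $G$, an antipode of degree at most $2$ incident with a suitably chosen other $\Theta$-class --- contradiction. The main obstacle I expect is precisely this last local step: tracking how the degrees of antipodes change under contraction, and ruling out the scenario in which a minimum-degree antipode sits inside a $Q_3$-free ``fan'' that only shrinks under every contraction instead of splitting off a low-degree antipode --- this is where the hypothesis $r(G)\le 2$ is used in an essential combinatorial (not merely tree-like) way.
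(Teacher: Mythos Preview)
Your overall strategy---take a minimal counterexample, contract a $\Theta$-class, lift the low-degree antipode via Lemma~\ref{lem:antipodesinaffines}, and use that rank $\le 2$ forbids a $Q_3$ minor---is exactly the paper's. But the two places you flag as needing work are genuine gaps, and one of them is the entire content of the lemma.

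First, the ``short separate induction'' that $\deg_G(v_0)<n$ is never carried out, and it is not as innocent as you suggest. The detour of choosing $E_f$ to miss a fixed minimum-degree antipode $v_0$ buys you nothing later: once you contract $E_f$ and find the degree-$3$ antipode $v^*$ incident to $E_f$, the vertex $v_0$ plays no further role. The paper simply contracts an \emph{arbitrary} $\Theta$-class and works with whatever degree-$3$ antipode appears; you should do the same and drop the $v_0$ argument.

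Second, and more importantly, the final local step is the heart of the proof and you do not do it. Your degree identity gives $\deg_G(v^*)=3$ and $p=\deg_G(v^{**})-1$, but that second equality constrains the neighbours of $v^{**}$, not of $v^*$; in particular it does not by itself force the two non-$E_f$ neighbours $y_1,y_2$ of $v^*$ to be incident with $E_f$ (e.g.\ if $\deg_G(v^{**})=1$ then $p=0$ and nothing is learned about $y_1,y_2$). The paper's key claim is precisely that \emph{both} $y_1$ and $y_2$ are incident with $E_e$: then one has three edges of $E_e$ (namely $xx'$, $y_1y_1'$, $y_2y_2'$) together with the edges $xy_1\in E_{f_1}$ and $xy_2\in E_{f_2}$, and the analogous picture at $-x$; contracting every $\Theta$-class other than $E_e,E_{f_1},E_{f_2}$ then yields $Q_3$, contradicting $r(G)\le 2$. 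Establishing that $y_1,y_2$ are incident with $E_e$ is exactly the ``tight local picture'' you allude to but never produce; without it there is no contradiction.
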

\begin{proof}
Since antipodal partial cubes of rank~$2$ are even cycles, it only remains to consider non-antipodal affine partial cubes. If $G$ is an affine partial cube of rank $2$, then by Lemma~\ref{lem:rankantipodal} it is a halfspace of an antipodal partial cube $G'$ of rank $3$. By~\cite[Theorem 7]{GKP70} since $G'$ is not a cycle its minimum degree is at least $3$. Hence if in $G$ there are vertices of degree $2$, then they are among its antipodes.

To show that $G$ has vertices of degree $2$, we use 
that by~\cite[Section 6.1]{CKP20}, that every rank $2$ partial cube can be augmented by adding vertices of degree at least $3$ to a COM $G''$ of rank $2$.

Finally, by Proposition~\ref{prop:rank2corner} the graph $G''$ has a corner $v$, which in particular is a vertex of degree at most $2$. By the above $v$ must already have been present in $G$. This concludes the proof.

%
\end{proof}

\begin{prop}\label{prop:strongLVrnk3}
 Let $G$ be an antipodal partial cube of rank~$3$ and $E_e$ a $\Theta$-class. There is a degree $3$ vertex incident to $E_e$.
\end{prop}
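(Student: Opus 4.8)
The plan is to prove the statement by induction on the size of $G$, mimicking the structure of the proof of Theorem~\ref{thm:simplical_on_theta} but replacing the OM-specific ingredients (Euclideaness, lines in the cocircuit graph) by the weaker facts available for antipodal partial cubes of rank~$3$. First I would observe that $G$ is a planar antipodal partial cube, since by~\cite{Fuk-93} rank~$3$ antipodal partial cubes are exactly the planar ones; equivalently, they are tope graphs of OMs of rank~$3$, which are Euclidean, hence Mandel, and therefore $\Theta$-Las~Vergnas by Theorem~\ref{thm:simplical_on_theta}. So in fact the statement reduces to: every OM of rank~$3$ is $\Theta$-Las~Vergnas. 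I would cite that rank~$3$ OMs are Euclidean (Mandel~\cite{Man-82}), hence Mandel in the sense defined in Section~\ref{sec:mandel}, and then apply Theorem~\ref{thm:simplical_on_theta} directly to conclude that for the given $\Theta$-class $E_e$ there is a degree~$3$ vertex incident to $E_e$.

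If instead one wants a self-contained proof in the spirit of Section~\ref{sec:antipodal} that does not route through Section~\ref{sec:mandel}, I would argue as follows. Since $G$ has rank~$3$, both halfspaces $E_e^+$ and $E_e^-$ are affine partial cubes of rank at most~$2$ by Lemma~\ref{lem:rankantipodal}. By Lemma~\ref{lem:LVaffinernk2}, the halfspace $E_e^+$ has a vertex $v$ of degree at most~$2$ among its antipodes $A(E_e^+)$. The antipodes of $E_e^+$ are precisely the vertices incident to $E_e$ in $G$, so $v$ is incident to $E_e$, say via edge $vv'$ with $v'\in E_e^-$. Now I would show that $v$ has degree exactly~$3$ in $G$: since $G$ is antipodal of rank~$3$, every vertex of $G$ has degree at least~$3$ (the lower bound $r(G)\le\delta(G)$ is not available for general antipodal partial cubes, but the partial result of~\cite{polat2018some} quoted just before the proposition gives that $r(G)\le 3$ forces $\delta(G)\ge r(G)=3$, using that $\delta(G)\le 2$ would force $r(G)=\delta(G)\le 2$). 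On the other hand $v$ has degree at most~$2$ in $E_e^+$ plus the single edge $vv'$ leaving $E_e^+$, so $\deg_G(v)\le 3$. Hence $\deg_G(v)=3$ and $v$ is incident to $E_e$, as desired.

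The main obstacle is making the second (self-contained) route airtight: one must be sure that ``degree at most~$2$ among antipodes of $E_e^+$'' really does translate into ``degree at most~$2$ inside the halfspace $E_e^+$'', i.e.\ that $v$ has no neighbors inside $E_e^+$ beyond those counted, and that the only edge of $G$ at $v$ leaving $E_e^+$ is the unique $E_e$-edge $vv'$; both follow from the definition of halfspace (edges of $G$ at $v$ either stay in $E_e^+$ or cross $E_e$, and there is exactly one of the latter). The other delicate point is the lower bound $\delta(G)\ge 3$: I would spell out that if some vertex had degree~$\le 2$ then by~\cite{polat2018some} $r(G)=\delta(G)\le 2<3$, contradicting $r(G)=3$, so indeed $\delta(G)\ge 3$ and the degree-$3$ vertex found is genuinely simplicial. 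Given how short this is, I would most likely present the first route (deduce it from Theorem~\ref{thm:simplical_on_theta} via Euclideaness of rank~$3$ OMs together with the characterization of rank~$3$ antipodal partial cubes) as the clean proof and only remark on the alternative.
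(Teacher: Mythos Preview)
Your first route contains a genuine error. Fukuda's result cited in the paper says that \emph{OMs of rank at most $3$} are exactly the \emph{planar} antipodal partial cubes; it does \emph{not} say that every antipodal partial cube of rank~$3$ is planar (and hence an OM). The implication goes only in the direction ``planar antipodal $\Rightarrow$ OM of rank $\le 3$''. There exist non-planar antipodal partial cubes of rank~$3$ that are not OMs --- indeed, the whole point of this proposition (and of Section~\ref{sec:antipodal}) is to extend the OM result to general antipodal partial cubes, and the paper explicitly frames it that way; see also Table~\ref{tab:antipodal}, where already at isometric dimension $5$ there are more antipodal partial cubes than OMs. So you cannot simply invoke Theorem~\ref{thm:simplical_on_theta}. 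Since you say you would ``most likely present the first route as the clean proof'', this is the part that needs to be dropped.

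Your second route, on the other hand, is correct and is essentially the paper's own argument. The paper phrases it via the contraction $G'=\pi_e(G)$ and the expansion data $G'_1,G'_2$, but since $\pi_e$ restricted to $E_e^+$ is an isomorphism onto $G'_1$, this is the same as working directly with the halfspace $E_e^+$ as you do: it is affine of rank $\le 2$ by Lemma~\ref{lem:rankantipodal}, so Lemma~\ref{lem:LVaffinernk2} gives an antipode $v$ of degree $\le 2$ in $E_e^+$; adding the unique $E_e$-edge yields $\deg_G(v)\le 3$, and the Polat bound forces $\deg_G(v)=3$. Your handling of the two ``delicate points'' (that the only edge leaving $E_e^+$ at $v$ is the $E_e$-edge, and the lower bound $\delta(G)\ge 3$ via~\cite{polat2018some}) is fine. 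Present Route~2 as the proof and discard Route~1.
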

\begin{proof}
 Suppose that the claim is false. Let $G$ be a counterexample and $E_e$ a $\Theta$-class, such that all vertices incident to $E_e$ have degree at least $4$. Consider the contraction $G'=\pi_e(G)$ of $G$ and let $G'_2, G'_1$ be the antipodal expansion of $G'$ leading back to $G$. Since their preimage under $\pi_e$ has degree at least $4$, all vertices in $G'_1\cap G'_2$ have degree at least $3$. But $G'_1\cap G'_2$ are the antipodes of the affine partial cube $G'_1$. Moreover, $G'_1$ is of rank~$2$ by Lemma~\ref{lem:rankantipodal}. Thus, we have a contradiction with Lemma~\ref{lem:LVaffinernk2}.
\end{proof}

While we have already used several times, that even OMs of rank~$4$ are not $\Theta$-Las Vergnas, surprisingly enough Las Vergnas' conjecture could still hold for general antipodal partial cubes. We have verified it computationally up to isometric dimension $7$. See Table~\ref{tab:antipodal} for the numbers.

\begin{table}[hb]
\center
\begin{tabular}{r||c|c|c|c|c|c|c}
            $n$  & 2 & 3 & 4 &  5 &   6 & 7 & 8\\ \hline \hline
\text{antipodal} & 1 & 2 & 4 & 13 & 115 & 42257 & ?\\ \hline
\text{OM}        & 1 & 2 & 4 &  9 &  35 & 381 & 192449 \\\hline
\end{tabular}
\caption {Numbers of antipodal partial cubes and OMs of low isometric dimension. The latter can also be retrieved from \url{http://www.om.math.ethz.ch/}.}
\label{tab:antipodal}
\end{table} 

 Since already on isometric dimension $6$ there are 13488837 partial cubes, instead of filtering those of isometric dimension $7$ by antipodality, we filtered those of isometric dimension $6$ by affinity. There are $268615$ of them. We thus could create all antipodal partial cubes of dimension $7$ and count them and verify Las Vergnas' conjecture also for this set. We extend the  prolific Las Vergnas' conjecture to a much wider class.

\begin{quest}\label{quest:LV}
Does every antipodal partial cube of rank~$r$ have minimum degree at most $r$?
\end{quest}

%
%
%
%

\section{Conclusions and future work}
We have shown that Mandel OMs have the $\Theta$-Las Vergnas property, therefore disproving Mandel's conjecture. Finally, Las Vergnas' conjecture remains open and one of the most challenging open problems in OM theory. After computer experiments and a proof for rank~$3$, we dared to extend this question to general antipodal partial cubes, see Question~\ref{quest:LV}. Another strengthening of Las Vergnas' conjecture is the conjecture of Cordovil-Las Vergnas. We have verified it by computer for small examples and it holds for low rank in general. However, here we suspect the existence of a counter example at least in the setting of $\overline{\mathcal{G}}^{n,r}$.

Our second main contribution is the introduction of corner peelings for COMs and the proof of their existence in the realizable, rank~$2$, and hypercellular cases. A class that is a common generalization of the latter two is the class $\mathcal{S}_4$ of Pasch graphs. Do these graphs admit corner peelings? See Question~\ref{quest:pasch}.  

Let us close with two future directions of research that appear natural in the context of the objects discussed in this paper.

\subsection{Shellability}
In the context of an OM or AOM, a shelling is a special linear ordering of the vertices of its tope graph that yields a
\emph{recursive coatom ordering} of the full face lattice. See~\cite{bjvestwhzi-93} for the definitions. 
It thus, is natural to compare corner peelings and shellings.

It is known that AOMs and OMs are shellable, see~\cite{bjvestwhzi-93}. Shellability is defined for pure regular complexes thus the question of existence of shellings can be asked for all such COMs. However, the pure COM consisting of two 4-cycles joined in a vertex is not shellable.
\begin{quest}
Which COMs are shellable?
\end{quest}
A necessary condition might be $r$-connectedness, if a COM has rank $r$. Another useful hint might be the fact that an amalgamation procedure for COMs described in~\cite{Ban-18} is similar to the notion of constructibility, which is a weakening of shellability, see~\cite{hac-00}.


Corner peelings of LOPs are related to extendable shellability of the octahedron, see~\cite{Tra-04,Cha-18}. 
While OMs have corners, AOMs do not always have corners, as shown by the example of Figure~\ref{fig:tracyhall}. Hence, shellability does not imply the existence of a corner or a corner peeling. However, there still might be a connection:

\begin{quest}
 If a shellable COM $G$ has a corner peeling, can one find a shelling sequence that is a refinement of a corner peeling sequence of $G$?
\end{quest}

%

\subsection{Murty's conjecture}

An important open problem in OMs is a generalization of the \emph{Sylvester-Gallai Theorem}, i.e., for every set of points in the plane that does not lie on a single line there is a line, that contains only two points. 

The corresponding conjecture in OMs can be found in Mandel's thesis~\cite{Man-82}, where it is attributed to Murty. In terms of OMs it reads:

\begin{conj}[Murty]\label{conj:murty}
 Every OM of rank~$r$ contains a convex subgraph that is the Cartesian product of an edge and an antipodal graph of rank~$r-2$.
\end{conj}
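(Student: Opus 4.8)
The plan is to first translate Conjecture~\ref{conj:murty} into a statement about cocircuits, and then attack it by induction on the number of elements. A graph of the form $K_2 \square A$ with $A$ antipodal of rank $r-2$ is itself an antipodal partial cube of rank $r-1$; if it occurs as a convex subgraph of an OM $G$ of rank $r$, then it is an antipodal subgraph of $G$ of rank $r-1$, i.e.\ a cocircuit. Conversely, every antipodal subgraph of an OM is gated (Theorem~\ref{thm:char}) and hence convex, so any cocircuit of $G$ that happens to split as $K_2 \square A$ already witnesses the conjecture. Thus Conjecture~\ref{conj:murty} is equivalent to: \emph{every OM of rank $r \ge 2$ has a cocircuit with a coloop}. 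For $r=2$ every edge of the even cycle works (with $A$ a single vertex), and for $r=3$ this is exactly the statement that a pseudoline arrangement on the sphere has a simple crossing, i.e.\ Melchior's inequality / the Sylvester--Gallai theorem; the content lies in $r\ge 4$.

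For the induction, first observe that if $G$ has a coloop $e$, then $G = K_2 \square (G\setminus e)$ with $G\setminus e$ of rank $r-1$, and a cocircuit-with-coloop $K_2 \square A'$ of $G\setminus e$ ($A'$ of rank $r-3$, which exists by induction) yields the convex subgraph $K_2 \square (K_2 \square A') \subseteq G$. So we may assume $G$ is coloop-free, and then $G\setminus e = \pi_e(G)$ has rank $r$ and fewer elements for every element $e$. Fix $e$; by induction $G\setminus e$ contains a cocircuit $C' \cong K_2 \square A$. Now re-introduce $e$: by Lemma~\ref{lem:exp_antipodal} applied to the expansion $G$ of $G\setminus e$ and to the antipodal subgraph $C'$, there is an antipodal subgraph $C$ of $G$ contracting to $C'$, and the restricted expansion is either peripheral or an OM-expansion. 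If $E_e$ does not cross $C'$, then $C \cong C'$ is already a cocircuit-with-coloop of $G$ and we are done; note that the trivial peripheral case is impossible here, since it would force $e$ to be a coloop of $G$. The remaining cases -- $E_e$ crossing $C'$ in an OM-expansion, or properly peripherally -- are exactly where the product structure (equivalently, the coloop of $C'$) can be destroyed by the new element.

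Hence the hard part is to \emph{choose} $e$ so that some cocircuit-with-coloop of $G\setminus e$ is not crossed by $e$; this is a genuine obstacle, and as for the higher-rank Sylvester--Gallai problem I do not expect a purely local argument to suffice. A complementary line that works in the uniform case is to deduce Murty's conjecture from Las Vergnas': a simplicial tope $v$ of a uniform OM lies (as recalled in the introduction) in a unique convex $Q^-_r$, each of whose $r$ facets avoiding the missing vertex is an intact convex $Q_{r-1} = K_2 \square Q_{r-2}$ with $Q_{r-2}$ antipodal of rank $r-2$; so Conjecture~\ref{conj:lasvergnas} for UOMs implies Conjecture~\ref{conj:murty} for UOMs. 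For arbitrary OMs the convex closure of a simplicial tope need not be $Q^-_r$ and one cannot pass to a uniform extension, so the general conjecture seems to require either a direct argument that every simplicial tope sees a convex $K_2 \square(\text{rank }r-2)$, or a new global, Euler-characteristic-type invariant replacing Melchior's inequality in rank $\ge 4$. I expect Murty's conjecture to be at least as hard as Las Vergnas' conjecture itself.
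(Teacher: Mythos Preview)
This statement is Murty's \emph{conjecture}, presented in the paper's concluding section as an open problem; the paper gives no proof. It records that the conjecture is known for realizable OMs (Shannon) and more generally for Mandel OMs (Mandel's thesis), and then says explicitly ``Still Murty's conjecture in general seems out of reach.'' So there is no paper proof to compare your attempt against.

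Your proposal is not a proof either, and you say so yourself: the inductive scheme stalls exactly where you identify, namely when for every choice of $e$ every cocircuit-with-coloop of $G\setminus e$ is crossed by $e$. Your reformulation (every OM of rank $r\ge 2$ has a cocircuit with a coloop) is correct and is the standard way the conjecture is read. Two remarks on the content:
\begin{itemize}
\item For UOMs the conjecture is trivially true and does not need Las~Vergnas: by Theorem~\ref{thm:char}(v) every proper antipodal subgraph of a UOM is a hypercube, so every cocircuit is already $Q_{r-1}=K_2\,\square\,Q_{r-2}$. Your detour through simplicial topes and $Q_r^-$ is unnecessary there.
\item You do not mention the one nontrivial positive result the paper cites: Mandel proved the conjecture for the class of Mandel OMs. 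Since the present paper shows that not every OM is Mandel (Corollary~\ref{cor:Mandelwaswrong}), this does not settle the question, but it is the current state of the art and subsumes your rank-$3$ base case.
\end{itemize}

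In short: there is nothing to compare, and your honest assessment that the general conjecture is out of reach agrees with the paper's.
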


The realizable case of Murty's conjecture is shown by~\cite{Sha-79} and more generally holds for \emph{Mandel} OMs~\cite{Man-82}. Indeed, we suspect that along our strengthening of Mandel's theorem (Theorem~\ref{thm:simplical_on_theta}) a $\Theta$-version of Mandel's results can be proved:

\begin{conj}
Every $\Theta$-class in a Mandel OM of rank~$r$ is incident to a vertex of an antipodal graph that is the Cartesian product of an edge and an antipodal graph of rank~$r-2$.
\end{conj}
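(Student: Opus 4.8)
The plan is to deduce this from the machinery already developed for Theorem~\ref{thm:simplical_on_theta}, after a reformulation. An antipodal subgraph of $G$ that is a Cartesian product $K_2\B B$ with $B$ antipodal of rank $r-2$ is precisely a cocircuit of $G$ — a maximal proper antipodal subgraph, hence an OM of rank $r-1$ — that possesses a coloop, the coloop being the $\Theta$-class serving as the $K_2$-factor. Reading ``incident to $E_e$'' as ``containing an endpoint of an $E_e$-edge'' (automatic when $E_e$ crosses the subgraph), the conjecture asks: in a Mandel OM $G$ of rank~$r$, every $\Theta$-class $E_e$ is incident to a cocircuit with a coloop. I would prove this by running the induction of the proof of Theorem~\ref{thm:simplical_on_theta}, tracking such a cocircuit in place of a simplicial vertex.

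First I would induct on $|V(G)|$. Fix an expansion $G_1,G_2=-G_1$ of $G$ in general position with $G_1$ a Euclidean AOM. If $G$ is not the Cartesian product with $K_2$-factor corresponding to $E_e$, then, exactly as in Theorem~\ref{thm:simplical_on_theta}, Observation~\ref{obs:coloop} together with strict acyclicity of the orientation of the lines of $G^*$ with respect to $E_e$ produces a cocircuit $A$ of rank $r-1$ with $A\subseteq G_1\cap E_e^-$ (after possibly reorienting $E_e$), all of whose out-neighbours in $G^*$ are crossed by $E_e$. Contracting every $\Theta$-class that does not cross $A$ yields $H\cong A\B K_2$ with the $K_2$-factor corresponding to $E_e$; as in the original proof the images $H_1,H_2$ of $G_1,G_2$ witness that $H$ is Mandel, and since we are in this case $H\neq G$, so $|V(H)|<|V(G)|$.

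Now $A$, being an OM of rank $r-1\ge 1$, has cocircuits, and each cocircuit $B$ of $A$ is $A\cap D$ for some neighbour $D$ of $A$ in $G^*$. Choosing $D$ to be an out-neighbour, so that $E_e$ crosses $D$ but not $A$, the graph $\mathcal B:=K_2\B B\subseteq A\B K_2=H$ is a cocircuit of $H$ with a coloop incident to $E_e$ — alternatively one simply applies the induction hypothesis to $H$ and $E_e$. The task is to lift $\mathcal B$ to a cocircuit $\mathcal A$ of $G$ of the same rank $r-1$ (hence again of the form $K_2\B B$), incident to $E_e$. Lemma~\ref{lem:exp_antipodal} supplies an antipodal subgraph of $G$ contracting to $\mathcal B$, and the point is to choose $D$ (equivalently $B$) so that this lift gains no rank, i.e.\ is crossed by none of the contracted $\Theta$-classes. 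This is exactly where the out-neighbour property of $A$ is meant to enter, via the same line-orientation bookkeeping that concludes the proof of Theorem~\ref{thm:simplical_on_theta} — there used to show that the cocircuits certifying simpliciality of the constructed vertex persist in $G$ because the relevant line is oriented from $A$ towards $D$ and hence $A,D$ are already adjacent in $G^*$. This rank control of the lift is the main obstacle: unlike the simplicial-vertex case, where Lemma~\ref{lem:simplicialvertex} gives a combinatorial certificate that transports cleanly through a contraction, here the certificate is the coloop, and it is not a priori clear that some admissible $B$ lifts without acquiring extra crossed $\Theta$-classes; excluding this may require using strict acyclicity more delicately. Finally, if $G=G'\B K_2$ with $K_2$-factor $E_e$: should $G$ fail to be a product with respect to some other $\Theta$-class $E_f$, apply the above to $E_f$ to get a cocircuit with a coloop incident to $E_f$, which is automatically incident to $E_e$ since the $E_e$-factor is a full $K_2$ and so every vertex of $G$ is incident to $E_e$; otherwise $G\cong Q_r$ and any facet $Q_{r-1}\cong K_2\B Q_{r-2}$ works.
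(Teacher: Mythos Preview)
The paper does not prove this statement: it is explicitly presented as a \emph{conjecture}, introduced with the phrase ``we suspect that along our strengthening of Mandel's theorem (Theorem~\ref{thm:simplical_on_theta}) a $\Theta$-version of Mandel's results can be proved.'' There is thus no proof to compare against; your proposal is an attempt to carry out exactly the program the authors sketch but do not complete.

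Your reformulation is correct: in an OM of rank~$r$, an antipodal subgraph of the form $K_2\B B$ with $B$ of rank~$r-2$ is precisely a cocircuit possessing a coloop, and your inductive scheme mirrors Theorem~\ref{thm:simplical_on_theta} faithfully. The gap you yourself flag is genuine and is, presumably, the reason the statement remains a conjecture. In the proof of Theorem~\ref{thm:simplical_on_theta}, the line-orientation argument guarantees that the lifted cocircuit $D\in\mathcal{C}$ is adjacent to $A$ in $G^*$, i.e.\ $A\cap D$ has rank~$r-2$; this is enough to certify that the simplicial vertex $v\in A$ lies in $D$. But adjacency of $D$ to $A$ says nothing about whether $E_e$ (or any other class) is a coloop of $D$: the contracted $\Theta$-classes outside $A^0\cup\{E_e\}$ may well cross $D$, and once they do the product structure $D'\cong B\B K_2$ present in $H$ need not persist in $D$. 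The certificate Lemma~\ref{lem:simplicialvertex} provides for simpliciality is local to the vertex $v$ and its incident cocircuits, whereas a coloop is a global property of the cocircuit $D$ itself; the Euclideaness hypothesis, as used, controls only the former. Closing this gap would require a genuinely new idea beyond the machinery of Theorem~\ref{thm:simplical_on_theta}, so your proposal should be read as a plausible outline rather than a proof.
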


On the other hand it would be interesting to find OMs, that do not have this strengthened property. Still, Murty's conjecture in general seems out of reach. We propose a reasonable weaker statement to attack:

\begin{quest}\label{conj:cube}
 Does every OM of rank~$r$ contain a convex $Q_{\lceil\frac{r}{2}\rceil}$.
\end{quest}

\section{Acknowledgments}
We thank the referees for helpful comments and Arnaldo Mandel for providing us with a copy of his PhD thesis.
This work was supported by the Slovenian Research Agency (research core funding
No.\ P1-0297 and projects J1-9109, N1-0095, J1-1693). Tilen Marc wishes to express his gratitude to Institut français de Slovénie for supporting a visit in Marseille that started the present research. Kolja Knauer was furthermore supported by the ANR project DISTANCIA: ANR-17-CE40-0015 and by the Spanish \emph{Ministerio de Econom\'ia,
Industria y Competitividad} through grant RYC-2017-22701.
\bibliographystyle{my-siam}
{\footnotesize \bibliography{minors}}

\end{document}